\let\mathcal\mathscr
\numberwithin{equation}{section}
\newtheorem{theorem}{Theorem}
\newtheorem{lemma}{Lemma}
\theoremstyle{definition}
\newtheorem*{ack}{Acknowledgements}
\newtheorem*{notat}{Notation and conventions}
\newtheorem*{hyp}{Hypothesis-$\rho$}
\renewcommand{\d}{\mathrm{d}}
\renewcommand{\phi}{\varphi}
\newcommand{\0}{\mathbf{0}}
\newcommand{\PP}{\mathbb{P}}
\newcommand{\FF}{\mathbb{F}}
\newcommand{\ZZ}{\mathbb{Z}}
\newcommand{\NN}{\mathbb{N}}
\newcommand{\QQ}{\mathbb{Q}}
\newcommand{\RR}{\mathbb{R}}
\newcommand{\CC}{\mathbb{C}}
\newcommand{\cQ}{\mathcal{Q}}
\newcommand{\cD}{\mathcal{D}}
\newcommand{\cM}{\mathcal{M}}
\renewcommand{\leq}{\leqslant}
\renewcommand{\le}{\leqslant}
\renewcommand{\geq}{\geqslant}
\renewcommand{\bar}{\overline}
\newcommand{\ma}{\mathbf}
\newcommand{\m}{\mathbf{m}}
\newcommand{\M}{\mathbf{M}}
\newcommand{\A}{\mathbf{A}}
\newcommand{\B}{\mathbf{B}}
\newcommand{\x}{\mathbf{x}}
\newcommand{\y}{\mathbf{y}}
\renewcommand{\v}{\mathbf{v}}
\newcommand{\z}{\mathbf{z}}
\renewcommand{\b}{\mathbf{b}}
\renewcommand{\a}{\mathbf{a}}
\renewcommand{\k}{\mathbf{k}}
\renewcommand{\t}{\mathbf{t}}
\newcommand{\al}{\alpha}
\renewcommand{\rho}{\varrho}
\newcommand{\ve}{\varepsilon}
\newcommand{\bla}{\boldsymbol{\lambda}}
\newcommand{\bxi}{\boldsymbol{\xi}}
\DeclareMathOperator{\rank}{rank}
\DeclareMathOperator{\supp}{supp}
\DeclareMathOperator{\diag}{diag}
\DeclareMathOperator{\Mod}{mod} 
\renewcommand{\bmod}[1]{\,(\Mod{#1})}
\begin{document}

\title[Singular intersections of quadrics]{Rational
  points on singular\\ intersections of quadrics}

\author{T.D.\ Browning}
\address{School of Mathematics\\
University of Bristol\\ Bristol\\ BS8 1TW\\ United Kingdom}
\email{t.d.browning@bristol.ac.uk}

\author{R.\ Munshi}
\address{School of Mathematics\\ 
Tata Institute of Fundamental Research\\
1 Homi Bhabha Road\\ Colaba\\Mumbai 400005\\ India}
\email{rmunshi@math.tifr.res.in}

\date{\today}

\begin{abstract}
Given an intersection of two quadrics $X\subset \PP^{m-1}$, with
$m\geq 9$, the quantitative arithmetic of the set $X(\QQ)$ is
investigated under the assumption that the singular locus of $X$ 
consists of a pair of conjugate singular points defined over $\QQ(i)$.  
\end{abstract}

\subjclass{11D72 (11E12, 11P55, 14G25, 14J20)}

\maketitle
\tableofcontents

\section{Introduction}
\label{intro}

The arithmetic of quadratic forms has long held a special place in
number theory. In this paper we focus our efforts on 
algebraic varieties $X\subset \PP^{m-1}$ which arise as the
common zero locus of two quadratic forms $q_1,q_2\in 
\ZZ[x_1,\dots,x_m]$.  We will always assume that $X$ is a 
geometrically integral complete intersection which is not a cone. 
Under suitable further hypotheses on $q_1$ and 
$q_2$, we will be concerned with estimating 
 the number of $\QQ$-rational points on $X$ of bounded height. 
Where successful this will be seen to yield a proof of the Hasse
principle for the varieties under consideration. 

The work of Colliot-Th\'el\`ene, Sansuc and Swinnerton-Dyer \cite{CT} 
provides a comprehensive description of the qualitative arithmetic 
associated to the set $X(\QQ)$ of $\QQ$-rational points on $X$
for large enough values of $m$. In
fact it is known that the Hasse principle holds for any smooth model
of $X$ if $m\geq 9$. This can be
reduced to $m\geq 5$ provided that  $X$
contains a pair of conjugate singular points and does not belong to a certain explicit class of varieties for which the Hasse principle is known to fail.

In this paper the quadratic forms $q_1$ and $q_2$ will have special
structures. Let $Q_1$ and $Q_2$ be  integral
quadratic forms in $n$ variables $\mathbf x=(x_1,\dots,x_n)$, 
with  underlying symmetric matrices $\M_1$ and $\M_2$, so that
$Q_i(\x)=\x^T\M_i\x$ for $i=1,2$. Then we set 
\begin{align*}
q_1(x_1,\dots, x_{n+2})&=Q_1(x_1,\dots,x_n)-x_{n+1}^2-x_{n+2}^2,\\
q_2(x_1,\dots, x_{n+2})&=Q_2(x_1,\dots,x_n).
\end{align*}
We will henceforth assume that $Q_{2}$ is non-singular and that as a
variety $V$ in $\PP^{n-1}$, the intersection of quadrics
$Q_{1}(\x)=Q_{2}(\x)=0$ is also non-singular. 
It then follows that $X$ has a singular locus 
containing precisely two singular points which are conjugate over
$\QQ(i)$. The question of whether the Hasse principle holds for 
such varieties is therefore answered in the affirmative by  \cite{CT} when $n\geq
3$.   Furthermore, when $X(\QQ)$ is non-empty, it is well-known  
(see \cite[Proposition 2.3]{CT}, for example)  that $X$ is $\QQ$-unirational. 
In particular $X(\QQ)$ is Zariski dense in $X$ as soon as it is non-empty.

Let $r(M)$ be the function that counts the number of representations
of an integer $M$ as a sum of two squares and  
let $W: \RR^{n}\rightarrow \RR_{\geq 0}$ be an infinitely
differentiable bounded function of compact support.
Our analysis of the density of $\QQ$-rational points on $X$ will be
activated via  the weighted sum 
\begin{equation}\label{eq:main-sum}
S(B)=\sum_{\substack{\mathbf x \in \mathbb Z^n\\ 2\nmid Q_{1}(\x)\\
Q_2(\x)=0}}r(Q_1(\mathbf
x)) W\left(\frac{\mathbf x}{B}\right), 
\end{equation}
for $B\rightarrow \infty$.  The requirement that  $Q_{1}(\x)$ be odd 
is not strictly  necessary but makes our argument technically simpler.
Simple heuristics lead one to expect that $S(B)$  has order of magnitude $B^{n-2}$,
provided that there are points in $X(\RR)$ and $X(\QQ_{p})$ for every
prime $p$.   Confirmation of this fact is provided by 
work of Birch \cite{birch} when $n\geq 12$.   
 Alternatively, when
$Q_{1}$ and $Q_{2}$ are both diagonal and the form
$b_{1}q_{1}+b_{2}q_{2}$ is indefinite and has rank at least $5$
for every non-zero pair  $(b_{1},b_{2})\in \RR^2$, then Cook
\cite{C} shows that $n\geq 7$ is permissible.  
The following result offers an improvement over both of these results.

\begin{theorem}
\label{th1}
Let $n\geq 7$ and assume that 
$V$ is  non-singular with  $Q_{2}$ also non-singular.
Assume that  
$Q_1(\x)\gg 1$ and $\nabla Q_1(\x)\gg 1$,
for some absolute implied constant, for every $\x \in
\supp(W)$.
Suppose that $X(\RR)$ and $X(\QQ_{p})$ are non-empty for each prime $p$.  
Then there exist constants $c>0$ and $\delta>0$ such that 
$$
S(B)=cB^{n-2}+O(B^{n-2-\delta}).
$$
The  implied constant is allowed to
depend on 
$Q_1, Q_{2}$ and $W$.
\end{theorem}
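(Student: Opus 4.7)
The plan is to separate the multiplicative constraint embodied by $r(Q_1(\x))$ from the quadric constraint $Q_2(\x)=0$. Since $Q_1(\x)$ is odd on $\supp(W)$, Jacobi's formula $r(M)=4\sum_{d\mid M}\chi_{-4}(d)$ yields
$$S(B)=4\sum_{d\geq 1}\chi_{-4}(d)\,T(d,B),\qquad T(d,B)=\sum_{\substack{\x\in\ZZ^n,\ d\mid Q_1(\x)\\ 2\nmid Q_1(\x),\ Q_2(\x)=0}}W(\x/B).$$
Because $Q_1(\x)\ll B^2$ on the support, a hyperbola argument—using complete multiplicativity of $\chi_{-4}$ and the oddness of $Q_1(\x)$ to pair $d$ with $Q_1(\x)/d$—restricts the outer sum effectively to $d\le CB$, at the cost of a lower-order boundary contribution.

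For each such $d$ I would detect $Q_2(\x)=0$ via the Heath--Brown smooth $\delta$-symbol, introducing a second modulus $q$ of size up to $Q\asymp B$ together with a complete exponential sum $\sum_{a\bmod q}^{*}e(aQ_2(\x)/q)$ and a smooth weight in $Q_2(\x)/Q^2$. Combining the divisibility modulo $d$ with the congruence modulo $q$ by working in residue classes modulo $D=\operatorname{lcm}(d,q)$, and then applying Poisson summation in $\x$, rewrites $T(d,B)$ as a sum over $q\geq 1$ and dual frequency $\c\in\ZZ^n$ of products $S_{d,q}(\c)\,I_{d,q}(\c)$, where $S_{d,q}(\c)$ is a hybrid character--exponential sum coupling $\chi_{-4}(Q_1(\x))$ modulo $d$ with $e((aQ_2(\x)+\c\cdot\x)/D)$ modulo $q$, and $I_{d,q}(\c)$ is the corresponding Fourier integral of $W$ against the delta weight.

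The asymptotic $cB^{n-2}$ should arise from the zero frequency $\c=\0$ together with the standard singular series/integral contribution of the $\delta$-symbol, which factorises into local densities. The archimedean density is positive by $X(\RR)\neq\emptyset$ combined with the non-degeneracy assumptions $Q_1(\x)\gg 1$ and $\nabla Q_1(\x)\gg 1$, which keep the relevant real fibration transverse on $\supp W$; the $p$-adic factors are positive from $X(\QQ_p)\neq\emptyset$ combined with non-singularity of $Q_2$ and $V$. All of this is a routine, if laborious, unwinding once the Poisson step has been performed.

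The principal obstacle is the error term: after summing over $d\le B$, $q\le B$ and $\c\neq\0$, I must save a positive power of $B$ beyond the trivial count. This rests on square-root type estimates for $S_{d,q}(\c)$, obtained by factoring $d$ and $q$ into coprime prime-power blocks and analysing the resulting Gauss, Sali\'e and Kloosterman-type sums; non-singularity of $Q_2$ controls the quadratic exponential sum modulo $q$ (giving size $q^{n/2}$), while non-singularity of $V$ is what prevents the mixed sum modulo $d$, in which $\chi_{-4}\circ Q_1$ interacts with $e(\c\cdot\x/d)$, from degenerating. Repeated integration by parts in $I_{d,q}$ confines the effective range of $\|\c\|$, and here the lower bound $\nabla Q_1(\x)\gg 1$ is essential to rule out spurious stationary phases coming from the $d\mid Q_1(\x)$ condition. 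The hypothesis $n\ge 7$ enters precisely at this balancing step: seven variables provide just enough square-root cancellation, summed over all parameters, to beat the $B^{n-2}$ main term and deliver the stated power saving $B^{n-2-\delta}$.
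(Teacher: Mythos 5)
Your broad strategy---open up $r(Q_1(\x))$ via Jacobi's formula, use the hyperbola device to confine $d\ll B$, detect $Q_2(\x)=0$ by the Heath--Brown $\delta$-symbol, Poisson-sum in $\x$ to obtain $\sum_{\c}S_{d,q}(\c)I_{d,q}(\c)$, and extract the main term from $\c=\0$---is indeed the skeleton of the paper's argument. But the proposal has a genuine gap at the decisive step, and misses two technical devices without which the exponents do not close.

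The central gap is your assertion that ``square-root type estimates for $S_{d,q}(\c)$,'' obtained by factoring into coprime prime-power blocks and bounding the resulting Gauss/Kloosterman sums, supply ``just enough square-root cancellation'' at $n\geq 7$. They do not. Pointwise square-root cancellation in the $q$-modulus sum $\cQ_q(\m)$ gives $|\cQ_q(\m)|\ll q^{\frac n2+1+\ve}$, and feeding this together with Weil/Deligne bounds for the $d$-modulus sum into the Poisson expansion only closes the argument at $n\geq 8$. To reach $n=7$ the paper must win an additional saving from \emph{averaging over $q$}: after evaluating $\cQ_{p^r}(\m)$ explicitly via quadratic Gauss sums, one packages the $q$-sum into a Dirichlet series $\xi_M(s;\m)$ whose analytic continuation is governed by a real Dirichlet $L$-function $L(s-\tfrac{n+1}{2},\psi_\m)$ attached to the Jacobi symbol $((-1)^{(n-1)/2}Q_2^*(\m)/\cdot)$; Perron's formula plus Heath-Brown's hybrid subconvexity bound $L(\tfrac12+it,\chi)\ll(c_\chi|t|)^{3/16+\ve}$ gives the exponent $\theta_1(n)=\tfrac{7}{16}$ instead of the trivial $\tfrac12$. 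The paper notes explicitly that with the convexity bound the method would require $n\geq 8$. Your outline never averages over $q$ and would therefore fall one variable short.

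Two further ingredients are absent. First, the paper does \emph{not} apply the $\delta$-symbol to $Q_2(\x)=0$ with $Q\asymp B$; it first imposes $Q_2(\x)\equiv0\pmod d$ and then detects $Q_2(\x)/d=0$, allowing $Q=B/\sqrt{d}$. This is what makes the ``double Kloosterman refinement'' work: the modulus $d$ already present is recycled to shrink the $\delta$-symbol parameter, and without it the subsequent balancing of error terms fails. Second, the Poisson/$\delta$-symbol route only treats moduli $d$ whose factor $(d,\Delta_V^\infty)$ is small; large powers of bad primes are split off (your proposal never mentions this) and handled by a separate lattice-point argument using Davenport's successive-minima machinery together with a uniform $B^{\nu-2+\ve}$ bound for quadratics in $\nu\geq 3$ variables. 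Finally, your appeal to ``non-singularity of $V$'' to control $\cD_d(\m)$ is qualitatively right but glosses over the actual mechanism: the bound for $\cD_p(\m)$ bifurcates according to whether $\m$ lies on the dual hypersurface $V^*$ (whose degree and irreducibility must be controlled), with the Hooley/Zak estimates coming in at singular hyperplane sections, and a companion counting bound for $\m$ with $G(\m)=0$ or $Q_2^*(\m)=0$ is then required to sum over the frequencies. Without these pieces the claimed power saving does not materialise at $n=7$.
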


In 
\S \ref{s:conclusion}
an explicit value of $\delta$ will be given and it will be 
explained that  the
leading constant is an absolutely convergent 
product of local densities
$
c=\sigma_\infty \prod_p \sigma_p,
$
whose positivity is equivalent to the 
hypothesis that  $X(\RR)$ and $X(\QQ_{p})$ are non-empty for each prime $p$.  
In particular Theorem \ref{th1} provides a new proof of the Hasse principle for the varieties $X$ under consideration. 

Our proof of Theorem \ref{th1}  uses the circle method.  An
inherent technical difficulty in applying the circle method to systems
of more than one equation lies in the lack of a suitable analogue of
the Farey dissection of the unit interval, as required for the
so-called ``Kloosterman refinement''. In the present case this
difficulty is circumvented by the specific shape of the quadratic
forms $q_{1},q_{2}$. Thus it is possible to trade the equality
$Q_{1}(\x)=x_{n+1}^{2}+x_{n+2}^{2}$ for a family of congruences using the
familiar identity  
$$
r(M)=4\sum_{d\mid M}\chi(d),
$$
where $\chi$ is the real non-principal character modulo $4$. 
In this fashion the sum $S(B)$ can be thought of as counting suitably weighted
 solutions  $\x\in \ZZ^{n}$ of the quadratic equation $Q_{2}(\x)=0$,
 for which $Q_{1}(\x)\equiv 0 \bmod{d}$, for  varying $d$.
 We will apply the circle method to detect the single equation $Q_{2}(\x)=0$, 
 in the form developed by Heath-Brown \cite{H},  thereby setting the
 scene for a double  Kloosterman refinement by way of  Poisson summation.  
 This approach ought to be
 compared with joint work of the second author with Iwaniec 
\cite{IM2}, wherein an upper bound is achieved for the number of
integer solutions in a box to the pair of quadratic equations  
$Q_1(\mathbf x)=\Box$ and $Q_2(\mathbf x)=0$,
when $n=4$. In this case a simple upper bound sieve is
used to detect the square,  which thereby allows  the first equation to
be exchanged for a suitable family of congruences.   
Finally we remark that with additional work it would be possible to work with more general quadrics, in which the term $x_{n+1}^2+x_{n+2}^2$ is replaced by an arbitrary positive definite binary quadratic form. 

The exponential sums that feature in our work take the shape 
\begin{align}\label{eq:S'}
S_{d,q}(\m)=\sideset{}{^{*}}\sum_{a\bmod{q}}
\sum_{\substack{\mathbf k \bmod{dq}\\
Q_1(\k)\equiv 0\bmod{d}\\Q_2(\k)\equiv 0\bmod{d}}}
e_{dq}\left(aQ_2(\k)+\m.\k\right),
\end{align}
for positive integers $d$ and $q$ and varying $\m\in \ZZ^{n}$. 
The notation $\sum^*$ means that the sum is taken over elements coprime to the modulus.
We will extend it to summations over vectors in the obvious way.
There is a basic 
multiplicativity relation at work which renders it profitable 
 to consider the cases $d=1$ and $q=1$ separately. In the former case
 we will need to gain sufficient cancellation in the sums that emerge
 by investigating the analytic properties of the associated Dirichlet
 series 
$$
\xi(s;\m)=\sum_{q=1}^{\infty} \frac{S_{1,q}(\m)}{q^s},
$$ 
for $s\in \CC$.  
This is facilitated by the
fact that 
$S_{1,q}(\m)$  can be evaluated
explicitly using the formulae for quadratic Gauss sums. We will see
in \S \ref{sec:qsum}
that $\xi(s;\m)$ is absolutely convergent for
$\Re(s)>\frac{n}{2}+2$.  
In order to prove Theorem \ref{th1} it is important to establish an
analytic continuation of $\xi(s;\m)$ to the left of this
line.   This eventually allows us to establish an 
asymptotic formula for $S(B)$ provided that $n>6$. 
The situation for $n=6$ is more delicate and we are no longer able to
win sufficient cancellation through an analysis of $\xi(s;\m)$
alone. In fact it appears desirable to exploit  cancellation due to
sign changes in the exponential sum $S_{d,1}(\m)$. The latter 
is associated to a pair of quadratic forms, rather than a single form,
and this raises significant technical obstacles. We 
intend to return to this topic in a future publication.

With a view to subsequent refinements, much of our argument works under much greater generality than 
for the quadratic forms considered in Theorem \ref{th1}. 
In line with this, unless otherwise indicated, any estimate concerning  quadratic forms 
$Q_1,Q_2\in \ZZ[x_1,\ldots,x_n]$ 
is valid for arbitrary forms such that  $Q_2$ is non-singular, $n\geq 4$ and the variety $V\subset \PP^{n-1}$ 
defined by  $Q_1(\x)=Q_2(\x)=0$ 
is a 
geometrically integral complete intersection.
We let 
$$
\rho(d)=S_{d,1}(\mathbf{0}),
$$
in the notation of \eqref{eq:S'}.
The Lang--Weil estimate yields
$\rho(p)=O(p^{n-2})$
when $d=p$ is a prime, since the affine cone over $V$ has dimension  $n-2$.  
We will need upper bounds  for $\rho(d)$
of comparable strength for any $d$.  It will be convenient to make the following hypothesis.

\begin{hyp}
Let  $d\in \NN$ and $\ve>0$. Then we have 
 $\rho(d)=O(d^{n-2+\ve})$.
\end{hyp}

Here, as throughout our work,  the implied constant is allowed to
depend upon the coefficients of the quadratic forms $Q_{1},Q_{2}$ under
consideration and the
parameter $\ve$.  We will further allow all our implied constants to depend on the weight function $W$ in \eqref{eq:main-sum}, with any further dependence being explicitly
indicated by appropriate subscripts.
We will establish Hypothesis-$\rho$ in Lemma \ref{rho(d)} when $V$ is non-singular, as required for Theorem \ref{th1}.

\begin{notat}
Throughout our work $\NN$ will denote the set of positive
integers.  The
parameter $\ve$ will always denote a small positive real
number, which is allowed to take different values at different parts
of the argument.  We shall use $|\x|$ to denote the norm $\max |x_i|$ 
of a vector $\x=(x_1,\dots,x_n)\in \RR^{n}$. 
Next, given integers $m$ and $M$, by writing 
$m\mid M^\infty$ we will mean that any prime divisor of $m$ is also a prime
divisor of $M$.   Likewise $(m,M^\infty)$ is taken to mean the largest positive 
divisor $h$ of $m$ for which $h\mid M^\infty$.
It will be convenient to record the bound
\begin{equation}\label{eq:scat}
\#\{m\leq x: m\mid M^\infty\} 
\leq \sum_{p\mid m \Rightarrow p\mid M} \left(\frac{x}{m}\right)^\ve =
x^\ve \prod_{p\mid M}\left(1-p^{-\ve}\right)^{-1}
\ll (x|M|)^\ve,
\end{equation}
for any $x\geq 1$, 
a fact that we shall make frequent use of in our work. 
Finally we will write 
$e(x)=\exp(2\pi ix)$ and 
$e_q(x)=\exp(\frac{2\pi ix}{q})$.
\end{notat}

\begin{ack}
Some of this work was done while the authors were both visiting the 
 {\em Institute
for Advanced Study} in Princeton, 
the hospitality and financial
support of which is gratefully acknowledged. 
While working on this paper the first author was 
supported by EPSRC grant number
\texttt{EP/E053262/1}. The authors are very grateful to the anonymous referee for 
numerous helpful comments and for drawing our attention to  an error in the original treatment of Lemma \ref{lem:technical}.
\end{ack}

\section{Auxiliary estimates}

\subsection{Linear congruences}
\label{s:congruences}

Let $q\in \NN$.
For $n\times n$ matrices $\M$, with coefficients in $\ZZ$, and a vector $\a\in\ZZ^n$ we will often be led to
consider the cardinality
\begin{equation}\label{eq:2.1}
K_{q}(\M;\a)=\#\{\x\bmod{q}: \M\x\equiv \mathbf{a} \bmod{q}\}.
\end{equation}
The Chinese remainder theorem implies that $K_q(\M;\a)$ is a multiplicative function
of $q$, rendering it sufficient to conduct our analysis at prime powers  $q=p^r$.
We will need the following basic upper bound.

\begin{lemma}\label{lem:smith}
Assume that $\M$ has rank $\rho$ and let $\delta_p$ be the  
minimum of the $p$-adic orders of the $\rho \times \rho$ non-singular
submatrices of $\M$. Then we have 
$$
K_{p^r}(\M;\a)\leq \min\{ p^{nr}, p^{(n-\rho)r+\delta_p}\}.
$$
In particular 
$K_{p^r}(\M;\a)=O_{\M}(1)$ if $\rho=n$.
\end{lemma}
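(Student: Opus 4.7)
The plan is to diagonalise $\M$ by Smith normal form over $\ZZ_p$ and then count solutions coordinatewise. Smith normal form furnishes unimodular matrices $U, V$ with entries in $\ZZ_p$ such that $\M = U D V$, where $D = \diag(p^{e_1}, \dots, p^{e_\rho}, 0, \dots, 0)$ for integers $0 \leq e_1 \leq \dots \leq e_\rho$. Reducing $U$ and $V$ modulo $p^r$, the substitution $\y \equiv V\x \bmod{p^r}$ defines a bijection of $(\ZZ/p^r\ZZ)^n$, and the system $\M \x \equiv \a \bmod{p^r}$ becomes the decoupled system $D \y \equiv \b \bmod{p^r}$ with $\b = U^{-1}\a$.

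The transformed system decomposes into $n$ independent scalar congruences. For $1 \leq i \leq \rho$ the congruence $p^{e_i} y_i \equiv b_i \bmod{p^r}$ has at most $p^{\min(e_i, r)} \leq p^{e_i}$ solutions in $y_i$, while for $\rho < i \leq n$ the condition $0 \equiv b_i \bmod{p^r}$ contributes at most $p^r$ choices for $y_i$. Multiplying the local counts yields
$$
K_{p^r}(\M;\a) \leq p^{e_1 + \cdots + e_\rho} \cdot p^{(n-\rho)r},
$$
while the complementary trivial bound $K_{p^r}(\M;\a) \leq p^{nr}$ comes from counting the total number of residue vectors.

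It then remains to identify $e_1 + \cdots + e_\rho$ with $\delta_p$. This is the standard fact that the $\rho$-th determinantal divisor of $\M$ over $\ZZ_p$, namely the $p$-adic valuation of the gcd of all $\rho \times \rho$ minors, coincides with the sum of the $p$-adic valuations of the invariant factors. Since gcd with zero is vacuous, this valuation agrees with the minimum $p$-adic valuation among the non-zero $\rho \times \rho$ minors of $\M$, which are precisely the determinants of its non-singular $\rho \times \rho$ submatrices. Hence $\delta_p = e_1 + \cdots + e_\rho$, closing the main inequality. The final assertion, $K_{p^r}(\M;\a) = O_\M(1)$ when $\rho = n$, is then immediate: in that case $\delta_p$ is simply the $p$-adic valuation of $\det \M$, so $p^{\delta_p} \leq |\det \M|$ depends only on $\M$.

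No step poses a serious obstacle; the one point requiring a little care is the identification of $\delta_p$ with the sum of invariant-factor valuations, which follows from the invariance of determinantal ideals under left and right multiplication by unimodular matrices, so that the gcd of $\rho \times \rho$ minors of $\M$ agrees with that of $D$, for which the computation is trivial.
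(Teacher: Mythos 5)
Your proposal is correct and follows essentially the same route as the paper: both arguments pass to Smith normal form (you over $\ZZ_p$, the paper over $\ZZ$, which is only a cosmetic difference), reduce the count to a product of one-variable congruences, and identify $\delta_p$ with the sum of the $p$-adic valuations of the invariant factors via the $\rho$-th determinantal divisor. The extra detail you supply on that last identification is correct and is the one step the paper leaves implicit.
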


This is established by 
Loxton \cite[Proposition 7]{loxton}, but is also a trivial consequence of earlier work of Smith \cite{smith}, which provides a precise equality for 
$K_{p^r}(\M;\a)$. 
We present a proof of Lemma \ref{lem:smith}, for completeness, the upper bound 
$K_{p^r}(\M;\a)\leq p^{nr}$ being trivial.
Given $\M$ as in the statement of the lemma, it follows from the theory of the Smith 
normal form that there exist unimodular integer matrices $\A,\B$ such that 
$$
 \A\M\B=\diag(M_1,\ldots,M_n),
$$
with $M_1,\ldots,M_n \in \ZZ$ satisfying $M_i\mid M_{i+1}$, for $1\leq i< n$.
In particular, since $\M$ has rank $\rho$, it follows that $M_i=0$ for $i>\rho$. 
Hence 
\begin{align*}
K_{p^r}(\M;\a)
&=
\#\{\x\bmod{p^r}: M_ix_i\equiv (\A\mathbf{a})_i \bmod{p^r}, ~(1\leq i \leq \rho)\}\\
&\leq 
 p^{(n-\rho)r+v_p(M_1)+\cdots +v_p(M_\rho)}.
\end{align*}
This completes the proof of Lemma \ref{lem:smith}, since  $\delta_p=
v_p(M_1)+\cdots +v_p(M_\rho)$.

We end this section by drawing a conclusion 
about the special case that $\M$ is non-singular, with $\rho=n$. 
Suppose that there exists a vector $\x$ counted by 
$K_{p^r}(\M;\0)$, but satisfying $p\nmid \x$.
Then it follows from our passage to the Smith normal form that in fact 
$r\leq v_p(\det \M)$.

\subsection{Geometry of $V$} 
\label{geometry}

In this section we consider the geometry of 
the varieties
$V\subset \PP^{n-1} $ defined by the common zero locus of 
two quadratic forms
$Q_1,Q_2\in \ZZ[x_1,\dots,x_n]$, specifically 
in the case that $V$ is non-singular.
Suppose that $Q_i$ has underlying symmetric matrix $\M_i$, 
with  $\M_{2}$  non-singular.
Let $D=D(Q_1,Q_2)$ be the discriminant of the pair $\{Q_1,Q_2\}$, which is a non-zero integer by assumption.  According to Gelfand, Kapranov and 
Zelevinsky \cite[\S 13]{GKZ}, $D$ has total degree $(n+2)2^{n+1}$ in the 
coefficients of $Q_1,Q_2$ and is equal to the discriminant of the bihomogeneous polynomial 
$$
F(\b,\x)=b_1Q_1(\x)+b_2Q_2(\x).
$$
We  write
\begin{equation}
  \label{eq:Mc}
  \M(\b)=b_1\M_1+b_2\M_2,
\end{equation}
for the underlying symmetric matrix.
It follows from  \cite[Lemma 1.13]{CT} that 
\begin{equation}
  \label{eq:rank}
\rank \M(\b) \geq n-1
\end{equation}
for any $[\b]\in \PP^1$. 
Furthermore, 
Reid's thesis \cite{reid} shows  that the binary form 
$P(\b)=\det \M(\b)$  has non-zero discriminant.

An important r\^ole in our work will be played by the 
dual variety  
$V^*\subset  {\PP^{n-1}}^*\cong \PP^{n-1}$ 
of $V$.   Consider the incidence relation
$$
I=\{(x,H)\in V\times {\PP^{n-1}}^*: H \supseteq \mathbb{T}_x(V)\},
$$
where $\mathbb{T}_x(V)$ denotes the tangent hyperplane to $V$ at $x$.
The projection $\pi_1: I \rightarrow V$ makes $I$ into a bundle over $V$
whose fibres are  subspaces of dimension $n - \dim V - 2=1$.
In particular $I$ is an irreducible variety of dimension $n - 2$.  Since $V^*$ is defined to be the image of 
the projection 
$\pi_2:I \rightarrow {\PP^{n-1}}^*$, it therefore follows that 
the dual variety $V^*$ is irreducible. 
Furthermore, since $I$ has dimension $n-2$ one might expect that $V^*$ is a hypersurface 
in ${\PP^{n-1}}^*$. This fact, which is valid for any irreducible non-linear 
complete intersection, is established by Ein \cite[Proposition~3.1]{ein}.
Elimination theory shows that the defining homogeneous polynomial may be taken to have coefficients in $\ZZ$.  
Finally, by work of Aznar \cite[Theorem~3]{aznar}, the degree of 
$V^*$ is
$4(n-2)$. 
Hence $V^*$ is defined by an equation $G=0$, where 
$
G\in \ZZ[x_1,\dots,x_n]
$ 
is an absolutely irreducible form of degree $4(n-2)$.

Given a prime $p$, which is sufficiently large in terms of the coefficients of $V$, 
the reduction  of $V$ modulo $p$
will inherit many of the basic properties enjoyed by $V$ as a variety over $\QQ$. 
In particular it will
continue to be a 
non-singular complete intersection of codimension $2$,
satisfying the property that \eqref{eq:rank} holds for any 
$[\b]\in \PP^1$, where now $\M_i$ is 
taken to be the matrix obtained after reduction modulo $p$ of the
entries. Furthermore we may assume that $p \nmid 2\det \M_2$ and that
the discriminant of the polynomial $P(\b)$ does not vanish
modulo $p$.   We will henceforth set 
$$
 \Delta_V =O(1)
$$
to be the product of all primes for which any one of these properties fails at that prime.

\subsection{The function $\rho(d)$}

In this section we establish Hypothesis-$\rho$ when $V$ is non-singular, where 
$\rho(d)=S_{d,1}(\mathbf{0})$, in the notation of \eqref{eq:S'}.
Note that 
$\rho^*(d)\leq \rho(d)$, where
$$
\rho^*(d)=\#\{\x\bmod{d}:  (d,\x)=1, ~Q_1(\x)\equiv Q_2(\x)\equiv 0 \bmod{d}\}.
$$
We proceed to establish the following result.

\begin{lemma}
\label{rho(d)}
Hypothesis-$\rho$ holds if $V$ is non-singular. 
\end{lemma}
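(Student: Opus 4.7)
My plan is to exploit the multiplicativity of $\rho(d)$ coming from the Chinese remainder theorem, which gives $\rho(d) = \prod_{p^{r} \| d} \rho(p^{r})$, and thereby reduce to a uniform bound at prime powers. Since $\tau(d) \ll d^{\ve}$, it will suffice to prove
\begin{equation*}
\rho(p^{r}) \leq C_{p} \cdot (r+1) \cdot p^{(n-2)r}
\end{equation*}
with $C_{p} = 1$ whenever $p \nmid \Delta_{V}$ and $C_{p}$ an arbitrary prime-dependent constant for the finitely many $p \mid \Delta_{V}$, since the product of such $C_{p}$ is $O(1)$.

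To analyse $\rho(p^{r})$, I would split off the $p$-primitive solutions. Any $\x \bmod{p^{r}}$ with $p \mid \x$ can be written $\x = p\y$ with $\y$ determined mod $p^{r-1}$, and the congruences $Q_{i}(p\y) \equiv 0 \bmod{p^{r}}$ become $Q_{i}(\y) \equiv 0 \bmod{p^{r-2}}$. This yields the recursion
\begin{equation*}
\rho(p^{r}) = \rho^{*}(p^{r}) + p^{n}\rho(p^{r-2}), \qquad r \geq 2,
\end{equation*}
which unwinds to
\begin{equation*}
\rho(p^{r}) = \sum_{0\leq 2k \leq r} p^{nk}\rho^{*}(p^{r-2k})
\end{equation*}
up to trivial boundary contributions at $r - 2k \in \{0,1\}$. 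Inserting a bound of the form $\rho^{*}(p^{s}) \ll p^{(n-2)s}$ turns this into a geometric series $\sum_{k} p^{(4-n)k}$, which is $O(1)$ for $n > 4$ and $O(r)$ for the borderline case $n = 4$, delivering the target bound on $\rho(p^{r})$.

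For a good prime $p \nmid \Delta_{V}$, the conditions defining $\Delta_{V}$ in \S\ref{geometry} ensure that $V$ is a smooth complete intersection modulo $p$, so the Jacobian matrix of $(Q_{1}, Q_{2})$ has rank $2$ at every point of $V(\FF_{p})$. Hensel's lemma then produces exactly $p^{(n-2)(r-1)}$ primitive lifts mod $p^{r}$ above each primitive solution mod $p$; combined with the Lang--Weil estimate for the smooth projective variety $V$ of dimension $n-3$ giving $\#V(\FF_{p}) \ll p^{n-3}$, we obtain $\rho^{*}(p) \ll p^{n-2}$ and hence $\rho^{*}(p^{s}) \ll p^{(n-2)s}$.

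The principal obstacle is the contribution of the bad primes $p \mid \Delta_{V}$, at which $V$ may acquire singular points modulo $p$ and the naive Hensel lifting breaks down. To handle these I would stratify primitive solutions by the minimal $p$-adic valuation $\delta$ of a $2 \times 2$ minor of the Jacobian at $\x$. Points with $\delta = 0$ obey the Hensel argument above, while points with $\delta \geq 1$ necessarily lie on the auxiliary subscheme defined by the vanishing of all $2 \times 2$ Jacobian minors, an object closely related to the dual variety $V^{*}$ from \S\ref{geometry}, whose dimension is strictly smaller because $V$ is non-singular as a $\QQ$-variety. A careful dimension bookkeeping combined with Lemma \ref{lem:smith} applied to the linear system $\nabla Q_{1}(\x) \cdot \y \equiv \ast$, $\nabla Q_{2}(\x) \cdot \y \equiv \ast \bmod{p}$ governing the lifts then yields $\rho^{*}(p^{s}) \ll_{p} p^{(n-2)s}$, with the resulting constant absorbed into $C_{p}$. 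Multiplying the local bounds over $p \mid d$ completes the proof.
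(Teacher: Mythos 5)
Your overall setup — multiplicativity, reduction to prime powers, extraction of common factors to pass from $\rho(p^r)$ to the primitive counts $\rho^*(p^s)$ — matches the paper's opening moves exactly. The good-prime case $p\nmid\Delta_V$ is also handled correctly: Hensel lifting above a smooth $\FF_p$-point combined with Lang--Weil gives $\rho^*(p^s)\ll p^{(n-2)s}$ with constant $1$ there.

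The genuine gap is in your treatment of the bad primes $p\mid\Delta_V$, and it is the heart of the lemma. You propose to stratify primitive solutions by the minimal $p$-adic valuation $\delta$ of a $2\times 2$ Jacobian minor, observe that $\delta\geq 1$ forces $\x$ onto a lower-dimensional subscheme, and then ``absorb the constant into $C_p$''. But the difficulty is not the \emph{number} of residue classes mod $p$ at which the Jacobian degenerates; it is the \emph{number of lifts} mod $p^s$ above such a class. When $\rank_{\FF_p}J(\x)<2$, the linear system governing lifts (to which you would apply Lemma~\ref{lem:smith}) has $\geq p^{n-1}$ solutions per step rather than $p^{n-2}$, and without a further argument this excess compounds over $s$ steps. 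It is not a priori a constant factor — it is a potential loss of a power of $p^s$, which cannot be absorbed into $C_p$. You gesture at elimination theory to bound $\delta$, which is the right instinct, but turning that into a uniform-in-$s$ bound for the two-equation system requires a non-trivial Hensel-type induction at the degenerate strata, and your proposal does not supply it.

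The paper sidesteps exactly this issue by a different device, borrowed from Hooley's treatment of cubic forms. It introduces a dummy variable $\b$ and a dummy free sum over $a\in(\ZZ/p^k\ZZ)^*$, evaluates the resulting Ramanujan sum, and thereby rewrites $\rho^*(p^s)$ in terms of the quantities $N(p^k)$, which count $(\b,\x)$ with $p\nmid\b$, $p\nmid\x$ and $p^k\mid F(\b,\x)$ for the \emph{single} bihomogeneous form $F(\b,\x)=b_1Q_1(\x)+b_2Q_2(\x)$. For a single polynomial, the Hensel argument is clean: the discriminant $D$ of the pair $\{Q_1,Q_2\}$, reinterpreted as the discriminant of $F$, bounds $v_p(\nabla F(\b,\x))$ via effective elimination theory, and then a short coset argument shows $N(p^{k+1})=p^{n+1}N(p^k)$ once $k\geq 2v_p(D)+2$, forcing the relevant $S(k)$ to vanish. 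This handles all primes — good and bad — uniformly, and is what makes the proof close. To salvage your direct approach you would need an analogous bounded-valuation statement for the $2\times 2$ Jacobian minors restricted to the affine cone over $V$, together with a careful induction showing that the primitive count stabilises to $p^{(n-2)s}$ growth once $s$ exceeds twice that bound; that is essentially reinventing the paper's argument in the less convenient two-equation formulation.
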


\begin{proof}
We adapt an argument of Hooley \cite[\S 10]{nonary} used to handle the analogous situation for cubic hypersurfaces.
By multiplicativity it suffices to examine the case $d=p^r$ for a prime $p$ and $r\in \NN$. Extracting common factors between $\x$ and $p^r$, we see that  
\begin{equation}\label{m:1}
\rho(p^r)= \sum_{0\leq k <\frac{r}{2}} p^{kn} \rho^*(p^{r-2k}) +  p^{ (r-\lceil \frac{r}{2}\rceil)n  }.
\end{equation}
Using additive characters to detect the congruences gives
\begin{align*}
\rho^*(p^s)
&=\frac{1}{p^{2s}} \sum_{\b \bmod{p^s}} 
~~
\sideset{}{^{*}}
\sum_{\substack{\mathbf x \bmod{p^s}}}
e_{p^s}\left(b_1Q_1(\x)+b_2Q_2(\x)\right),
\end{align*}
where we recall that the notation $\sum^*$ means only 
$\x$ for which $p\nmid \x$ are of interest.
Extracting common factors between $p^s$ and $\b$ yields
\begin{align*}
\rho^*(p^s)
&=\frac{1}{p^{2s}}
\sum_{0\leq i<s} p^{in} S(s-i)  +p^{(n-2)s}\left(1-\frac{1}{p^n}\right),
\end{align*}
with 
$$
S(k)= \sideset{}{^{*}} \sum_{\b \bmod{p^k}} 
~~
\sideset{}{^{*}}
\sum_{\substack{\mathbf x \bmod{p^k}}}
e_{p^k}\left(F(\b,\x)\right),
$$
with $F(\b,\x)=b_1Q_1(\x)+b_2Q_2(\x)$.
We claim that 
$S(k)=O(1)$,
for any $k\in \NN$.   Once achieved, this implies  
that $\rho^*(p^s)=O(p^{(n-2)s})$. Inserting this into \eqref{m:1} gives
$\rho(p^r)=O(p^{(n-2)r})$,
which suffices for the lemma.

To analyse $S(k)$ we introduce a dummy sum over $a\in (\ZZ/p^k\ZZ)^*$ and replace $\b$ by $a\b$ to get
\begin{align*}
\phi(p^k)S(k)&= 
\sideset{}{^{*}} \sum_{a \bmod{p^k}} 
~~
\sideset{}{^{*}} \sum_{\b \bmod{p^k}} 
~~
\sideset{}{^{*}}
\sum_{\substack{\mathbf x \bmod{p^k}}}
e_{p^k}\left(aF(\b,\x)\right).
\end{align*}
Evaluating the resulting Ramanujan sum yields
\begin{equation}\label{m:3}
S(k)=\left(1-\frac{1}{p}\right)^{-1} \left\{ 
N(p^k)-p^{n+1}N(p^{k-1})
\right\},
\end{equation}
where
$N(p^k)$ is the number of $(\b,\x)\bmod{p^k}$, with $p\nmid \b$ and $p\nmid \x$, for which $p^k\mid F(\b,\x)$.  
We are therefore led to compare $N(p^k)$ with $N(p^{k-1})$, using an approach based on Hensel's lemma.

Let $\nabla F(\b,\x)=(Q_1(\x),Q_2(\x),b_1\nabla_\x Q_1(\x)+b_2\nabla_\x Q_2(\x)  )$, where 
$\nabla_\x$ means that the partial derivatives are taken with respect to the $\x$ variables. 
Using our alternative definition of the discriminant $D$ as the discriminant of $F$, we may view $D$ as the resultant of the $n+2$ quadratic forms appearing in 
$\nabla F(\b,\x)$. Writing $\y=(\b,\x)$, 
elimination theory therefore produces $n+2$ identities of the form
$$
Dy_i^N = \sum_{1\leq j\leq n+2} G_{ij}(\y) \frac{\partial F}{\partial y_i}, \quad (1\leq i\leq n+2),
$$
where $G_{ij}$ are polynomials with coefficients in $\ZZ$.
In particular, if $(\b,\x)\in\ZZ^{n+2}$ satisfies $p^m  \mid \nabla F(\b,\x)$, but   
$p\nmid \b$ and $p\nmid \x$, it follows that  
$m\leq v_p(D)$. Let us put $\delta=v_p(D)$.  

If $k\leq 2\delta+1$ then it trivially follows from \eqref{m:3} that $S(k)=O(1)$. 
If $k\geq 2\delta +2$, which we assume for the remainder of the argument, we will show   that $S(k)=0$.
Our work so far has shown that
$$
N(p^k)=\sum_{0\leq m\leq \delta} \#C_m(p^k),
$$
where $C_m(p^k)$ denotes the set 
of $\y=(\b,\x)\bmod{p^k}$, with $p\nmid \b$ and $p\nmid \x$, for which $p^k\mid F(\y)$ and $p^m  \| \nabla F(\y)$.
Given any $\y\in C_m(p^k)$ it is easy to see that 
\begin{align*}
F(\y+p^{k-m}\y')
&\equiv F(\y)+p^{k-m} 
\y'.
\nabla F(\y)
   \bmod{p^k}\\
&\equiv 0 \bmod{p^k},
\end{align*}
for any $\y'\in\ZZ^{n+2}$, with 
\begin{align*}
\nabla F(\y+p^{k-m}\y') -\nabla F(\y)
&\equiv 0 
   \bmod{p^{k-m}}\\
&\equiv 0 \bmod{p^{m+1}},
\end{align*}
Thus $C_m(p^k)$ consists of cosets modulo $p^{k-m}$. Moreover, 
$\y+p^{k-m}\y'\in C_m(p^{k+1})$ if and only if 
$$
p^{-k}F(\y)+p^{-m}\y'.
\nabla F(\y)\equiv 0
   \bmod{p},
$$
for which there are precisely $p^{n+1}$ incongruent solutions modulo $p$.
Hence 
$\#C_m(p^{k+1})=p^{n+1} \#C_m(p^k)$, which therefore shows that $S(k)=0$ in \eqref{m:3}.
This completes the proof of the lemma.
\end{proof}

\subsection{Treatment of bad $d$}

Returning briefly to $S(B)$ in \eqref{eq:main-sum}, we will need a separate argument to deal with the contribution from $\x$ for which $Q_2(\x)=0$ and $Q_1(\x)$ is divisible by 
large  values of  $d$ which share a common prime factor with $\Delta_V$.

To begin with we call upon joint work of the first author with Heath-Brown and Salberger \cite{bhbs}, which is concerned with uniform upper bounds for counting functions of the shape 
$$
M(f;B)=\#\{\t\in\ZZ^\nu:  |\t|\leq B, ~ f(\t)=0\},
$$
for polynomials $f\in\ZZ[t_1,\ldots,t_\nu]$ of degree $\delta\geq 2$.
Although the paper focuses on the situation for $\delta\geq 3$, the methods developed also permit a useful estimate in the case $\delta=2$. Suppose that $\nu=3$
and that the quadratic homogeneous part $f_0$ of $f$ is absolutely irreducible.  
Using 
 \cite[Lemmas 6 and 7]{bhbs} we can find a linear form $L\in \ZZ[t_1,t_2,t_3]$ of height $O(1)$ such that the intersection of the projective plane curves $f_0=0$ and $L=0$ consists of two distinct points. After eliminating one of the variables,  we are then free to apply 
 \cite[Lemma 13]{bhbs} to all the affine curves defined by $f=0$ and $L=c$, for each integer $c\ll B$. This gives the upper bound
$M(f;B)\ll B^{1+\ve}$ when $\nu=3$. 
According to \cite[Lemma 8]{bhbs}, we have therefore established the following result, which may be of independent interest.

\begin{lemma}
\label{m:5}
Let $\ve>0$, let $\nu\geq 3$ and let 
$f\in\ZZ[t_1,\ldots,t_\nu]$ be a quadratic polynomial with  absolutely irreducible quadratic homogeneous part.
Then we have 
$$M(f;B)\ll B^{\nu-2+\ve}.
$$
The implied constant in this estimate depends at most on $\nu$ and the choice of $\ve$.
\end{lemma}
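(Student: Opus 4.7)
The plan is to reduce the general case to $\nu = 3$ by a slicing argument and then handle $\nu = 3$ directly by selecting a well-chosen linear section and invoking the determinant method for affine plane curves. This is the quadratic analogue of the strategy developed in \cite{bhbs} for higher-degree polynomials, and I would recycle the technical lemmas from that paper, now applied in the quadratic case $\delta = 2$.

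For the base case $\nu = 3$, the form $f_0$ defines an absolutely irreducible projective conic in $\PP^2$. I would invoke \cite[Lemmas 6 and 7]{bhbs} to produce a linear form $L \in \ZZ[t_1,t_2,t_3]$ of height $O(1)$ whose zero locus meets the conic in two distinct points. On the box $|\t|\leq B$ the quantity $L(\t)$ takes only $O(B)$ integer values; for each such value $c$, the relation $L = c$ lets me eliminate one variable and reduce $f(\t) = 0$ to a polynomial equation of degree $2$ in the remaining two variables, whose quadratic homogeneous part is the restriction of $f_0$ to $L = 0$, a product of two distinct linear forms by the choice of $L$. The uniform determinant estimate \cite[Lemma 13]{bhbs} then supplies $O(B^\ve)$ integer solutions per slice, and summing over the $O(B)$ values of $c$ yields $M(f;B) \ll B^{1+\ve}$, which is the claim when $\nu = 3$.

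For $\nu \geq 4$, I would fix $\nu - 3$ of the variables to integer values of absolute value at most $B$, giving $O(B^{\nu - 3})$ choices, and apply the base case to each resulting three-variable quadratic. A generic slice preserves the absolute irreducibility of the quadratic homogeneous part by a Bertini-type argument, so the $\nu = 3$ bound contributes $O(B^{1+\ve})$ per slice, giving the total $O(B^{\nu - 2 + \ve})$. The exceptional slices, where the quadratic part becomes reducible or drops in degree, are cut out by a proper subvariety of the parameter space and contribute negligibly after a trivial estimate; this book-keeping is the content of \cite[Lemma 8]{bhbs}.

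The main obstacle is the geometric input in the $\nu = 3$ step: one must produce a linear form $L$ that simultaneously has height $O(1)$ — so that $L(\t)$ takes only $O(B)$ values on the box rather than more — and meets the conic transversally, which is needed to guarantee that the resulting affine curves fall within the hypotheses of the uniform plane-curve estimate. Absolute irreducibility of $f_0$ confines the failure locus to a proper subvariety of the dual $\PP^2$, so a bounded-height search produces an admissible $L$; this is exactly what \cite[Lemmas 6 and 7]{bhbs} are designed to deliver, and it is the one ingredient where the quadratic case differs materially from the higher-degree situations treated in \cite{bhbs}.
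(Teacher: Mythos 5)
Your proposal matches the paper's argument essentially step for step: the base case $\nu=3$ uses \cite[Lemmas 6 and 7]{bhbs} to produce the bounded-height linear form $L$, slices by $L=c$ and applies the uniform plane-curve estimate \cite[Lemma 13]{bhbs}, and the passage from $\nu=3$ to general $\nu$ is handled by \cite[Lemma 8]{bhbs}. You have merely unpacked the content of Lemma 8 (fixing $\nu-3$ variables, generic versus degenerate slices) where the paper simply cites it.
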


We shall also require some facts about lattices and their successive
minima, as established by  Davenport 
\cite[Lemma 5]{Dav}.
Suppose that $\Lambda\subset \ZZ^n$ is a lattice of rank $r$ and determinant $\det(\Lambda)$.
Then there exists a minimal basis 
 $\ma{m}_1,\ldots,\ma{m}_r$ of $\Lambda$ such
that $|\ma{m}_i|$ is equal to the $i$th successive minimum $s_i$, 
for $1 \leq i \leq r$,  with the property that
whenever one writes $\y\in\Lambda$ as 
$$
\y=\sum_{i=1}^{r}\lambda_{i}\ma{m}_i,
$$ 
then $\lambda_{i}\ll  s_i^{-1}|\y|$, 
for $1 \leq i \leq r.$ Furthermore, 
$$
\prod_{i=1}^r s_i \ll \det \Lambda \le \prod_{i=1}^r s_i,
$$
and $1\leq s_1\leq \cdots \leq s_n$.

We now come to the key technical estimate in this section. 
Given any $d\in \NN$ and $B\geq 1$, we will need an auxiliary upper bound for the quantity
\begin{equation}\label{eq:def-Ne}
N_d(B)=\#\{ \x\in\ZZ^n:  |\x|\leq B, ~d\mid Q_1(\x), ~ Q_2(\x)=0 \}.
\end{equation}
Simple heuristics suggest that $N_d(B)$ should have order $d^{-1}B^{n-2}$. For our purposes we require an upper bound in which any power of $d$ is saved.

\begin{lemma}
\label{lem:m:4}
Let $\ve>0, d\in \NN$ and $n\geq 5$.
Assume 
$B\geq d$ and 
Hypothesis-$\rho$.
Then we have
$$
N_d(B)\ll \frac{B^{n-2+\ve}}{d^{\frac{1}{n}}} +dB^{n-3+\ve}.
$$ 
\end{lemma}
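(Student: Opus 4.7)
The plan is to stratify $\x$ according to its residue class modulo $d$ and combine the per-class count supplied by Lemma~\ref{m:5} with geometry-of-numbers refinements, balancing this against a separate codimension-two count. Write
\[
N_d(B) = \sum_{\x_0 \in \cD} \#\bigl\{\x \in \x_0 + d\ZZ^n : |\x|\le B,\ Q_2(\x)=0\bigr\},
\]
where $\cD = \{\x_0 \bmod d : d\mid Q_1(\x_0),\ d\mid Q_2(\x_0)\}$, the condition $d\mid Q_2(\x_0)$ being automatic from $Q_2(\x)=0$. Hypothesis-$\rho$ yields $|\cD| = \rho(d) \ll d^{n-2+\ve}$. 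For each $\x_0 \in \cD$, I substitute $\x = \x_0 + d\y$ with $\y \in \ZZ^n$ and $|\y| \ll B/d$; the condition $Q_2(\x) = 0$ becomes the affine quadratic equation
\[
f_{\x_0}(\y) := dQ_2(\y) + 2\y^T \M_2 \x_0 + Q_2(\x_0)/d = 0,
\]
whose quadratic part $dQ_2$ is non-singular and hence absolutely irreducible. Since $n \ge 5$, Lemma~\ref{m:5} supplies $\ll (B/d)^{n-2+\ve}$ solutions per class.

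Summing this crude per-class bound over $\cD$ only yields $\ll B^{n-2+\ve}$, short of the first target term by exactly the factor $d^{1/n}$. To extract this saving I invoke Davenport's geometry-of-numbers lemma applied to the shifted lattice $\x_0 + d\ZZ^n$: all of its successive minima equal $d$, and combining the resulting minimal-basis bounds with the non-singularity of $Q_2$ should refine the per-class estimate to $\ll (B/d)^{n-2+\ve}\, d^{-1/n}$, which after summing over $\cD$ produces the desired $B^{n-2+\ve}/d^{1/n}$. The second term $dB^{n-3+\ve}$ comes from a codimension-two count: for each residue class one may use both $Q_1(\x) \equiv 0 \bmod d$ and $Q_2(\x)=0$ to obtain two quadratic equations on $\y$, whose joint system defines an affine complete intersection of codimension two. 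A variant of Lemma~\ref{m:5} obtained by first projecting onto a generic hyperplane section (reducing to a single quadric in $n-1$ variables) gives $\ll (B/d)^{n-3+\ve}$ solutions per class, and summing over $\cD$ produces $\rho(d)(B/d)^{n-3+\ve} \ll dB^{n-3+\ve}$.

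The principal technical obstacle is extracting the $d^{1/n}$ saving in the congruence approach: the bound supplied by Lemma~\ref{m:5} alone is too crude, and no tighter version of Hypothesis-$\rho$ is available in general (as the proof of Lemma~\ref{rho(d)} shows that $\rho(p^r) \asymp p^{(n-2)r}$). The improvement must instead arise from a delicate interplay between Davenport's lemma on $d\ZZ^n$, the non-singularity of $Q_2$, and the fine distribution of representatives of $\cD$, which is precisely the step where most of the technical work lies.
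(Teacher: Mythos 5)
Your first step—stratifying by residue classes modulo $d$, substituting $\x=\x_0+d\y$, and aiming at Lemma~\ref{m:5}—matches the paper, but the proposal lacks the single idea that actually produces the $d^{1/n}$ saving, and you acknowledge this yourself. Invoking Davenport's lemma on the lattice $d\ZZ^n$ (equivalently $\x_0+d\ZZ^n$) cannot help: as you note, all its successive minima equal $d$, so after rescaling to $\y$ one is just looking at $\ZZ^n$ and the per-class bound stays at $(B/d)^{n-2+\ve}$. The paper's mechanism is different and essential. One first reduces to $(d,\x)=1$ by extracting common factors, and then chooses the class representative $\x_0$ to be an \emph{actual point of the variety} inside the box (if the class is nonempty), so that $Q_2(\x_0)=0$. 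Taylor expansion of $Q_2(\x_0+d\y)=0$ then reads $\y.\nabla Q_2(\x_0)+dQ_2(\y)=0$, which forces the extra \emph{linear} congruence $\nabla Q_2(\bxi).\y\equiv 0\bmod d$. That congruence confines $\y$ to a rank-$n$ sublattice $\Lambda_{\a}\subset\ZZ^n$, $\a=\nabla Q_2(\bxi)$, of determinant $d/(d,\a)$; because $(d,\bxi)=1$ and $\M_2$ is non-singular, $(d,\a)\ll 1$, so $\det\Lambda_\a\gg d$. Davenport's lemma is applied to \emph{this} lattice, and the saving $d^{1/n}$ comes from the bound $s_n\geq(\det\Lambda_\a)^{1/n}\gg d^{1/n}$ on its largest successive minimum, which shortens one coordinate direction.

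Two further points. First, the term $dB^{n-3+\ve}$ is not produced by any "codimension-two count" in the paper; it simply comes from the $Y^{n-3}$ factor in Lemma~\ref{m:5} once one has fixed the shortest coordinate and summed $\rho(d)(B/d)^{n-3+\ve}$ over classes. Your sketch of an auxiliary codimension-two system on $\y$ out of $Q_1(\x)\equiv 0\bmod d$ and $Q_2(\x)=0$ is not well-founded: the former is a congruence already absorbed into the choice of class and does not furnish a second polynomial equation over $\ZZ$ in $\y$. Second, the initial reduction to $(d,\x)=1$ is not cosmetic; without it $(d,\nabla Q_2(\bxi))$ can be large, $\det\Lambda_\a$ can be small, and the $d^{1/n}$ gain disappears. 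As it stands the proposal correctly sets up the residue-class decomposition and the application of Lemma~\ref{m:5}, but the crucial lattice $\Lambda_\a$ arising from the Taylor expansion at a genuine variety point is absent, and that is exactly the step that makes the lemma work.
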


Note that this estimate is valid for any quadratic forms $Q_1,Q_2$ for which $Q_2$ is non-singular and the expected bound for $\rho(d)$ holds.  For our purposes the desired bound follows from Lemma \ref{rho(d)} when $V$ is non-singular.

\begin{proof}[Proof of Lemma \ref{lem:m:4}]
On extracting common factors between $\x$ and $d$ in $N_d(B)$, one quickly verifies that it suffices to prove the upper bound in the lemma for the quantity $N_d^*(B)$, in which the additional constraint $(d,\x)=1$ is added.  Breaking into residue classed modulo $d$, we see that
\begin{equation}\label{m:6}
N_d^*(B)=
\sideset{}{^{*}} \sum_{\substack{\bxi\bmod{d}\\  Q_1(\bxi)\equiv 0 \bmod{d} \\
 Q_2(\bxi)\equiv 0 \bmod{d}
}}
\#\{
 \x\in\ZZ^n:  |\x|\leq B, ~\x\equiv \bxi \bmod{d}, ~ Q_2(\x)=0
\}.
\end{equation}
Let us denote the set whose cardinality appears in the inner sum by $S_d(B;\bxi)$.
If $S_d(B;\bxi)=\emptyset$ then there is nothing to prove. Alternatively, suppose we are given $\x_0\in S_d(B;\bxi)$. Then any other vector in the set must be congruent to $\x_0$ modulo $d$.

Making the change of variables 
$\x=\x_0+d\y$ in $S_d(B;\bxi)$, we note that $|\y|<Y$, with $Y=2d^{-1}X$.  Furthermore, Taylor's formula yields
\begin{equation}\label{m:4}
\y.\nabla Q_2(\x_0) +dQ_2(\y)=0,
\end{equation}
since $Q_2(\x_0+d\y)=0$ and $Q_2(\x_0)=0$.
This equation implies that the $\y$ under consideration are forced to satisfy 
the congruence 
$\y.\nabla Q_2(\bxi)\equiv 0\bmod{d}$, since 
$\x_0\equiv \bxi \bmod{d}$.
Let us write $\a=\nabla Q_2(\bxi)$. 
Then it follows that 
$$
\#S_d(B;\bxi)\leq 1+ \#\{
 \y\in\Lambda_\a :  |\y|<Y , ~\mbox{\eqref{m:4} holds}\},
$$
where $\Lambda_\a=\{\y\in \ZZ^n:  \a.\y\equiv 0 \bmod{d}\}$.
This set defines an integer lattice of full rank and determinant 
$$
\det \Lambda_\a = \frac{d}{(d,\a)}.
$$
The conditions of summation in \eqref{m:6} demand that  $(d,\bxi)=1$.
It therefore follows from the remark at the end of \S \ref{s:congruences} that 
$p^j\ll 1$, whenever $j\in \NN$ and $p$ is a prime for which $p^j\mid (d,\nabla Q_2(\bxi))$.  
Thus $(d,\a)\ll 1$ and it follows that  
$\det \Lambda_\a \gg d.
$   

Let $\M$ denote the non-singular matrix formed from taking a minimal basis  $\m_1,\ldots,\m_n$ for $\Lambda_\a$. 
Making the change of variables $\y=\M\bla$, and recalling the properties of the minimal basis recorded above,  we see that 
$$
\#S_d(B;\bxi)\leq 1+ \#\{
 \bla\in\ZZ^n :  \mbox{$\lambda_i\ll s_i^{-1}Y$ for $1\leq i\leq n$} ,~ q(\bla)=0\},
$$
where $s_1,\ldots,s_n$ are the successive minima of $\Lambda_\a$ and 
$q(\bla)$ is obtained from 
\eqref{m:4} via substitution. 
In particular, it is clear that the quadratic homogeneous part $q_0$ of $q$ has underlying matrix $\M^T \M_2 \M$, which is non-singular.
We are therefore left with the task of counting integer solutions to a quadratic equation, which are constrained to lie in a lop-sided region. Furthermore, since we require complete uniformity in $d$, we want an upper bound in which the implied constant does not depend on the coefficients of $q$.

It being difficult to handle a genuinely lopsided region, we will simply fix the smallest variable and then allow the remaining vectors $\bla'=(\lambda_1,\ldots,\lambda_{n-1})$ to run over the full hypercube with side lengths $O(Y)$.  In this way we find that
$$
\#S_d(B;\bxi)\leq 1+ 
\sum_{
\substack{
t\ll s_n^{-1}Y
 }
}
\#\{
 \bla'\in\ZZ^{n-1} :  |\bla' |\ll Y, ~ q(\bla',t)=0\}.
$$
Viewed as a polynomial in $ \bla'$, the
quadratic homogeneous part of $q(\bla',t)$ is equal to $q_0(\bla',0)$.
This must have rank at least $n-2\geq 3$, since $q_0$ is non-singular and its rank 
cannot decrease by more than $2$ on any hyperplane.
In particular, $q_0(\bla',0)$ is absolutely irreducible.
We apply Lemma \ref{m:5} with $\nu=n-1$ and $f=q(\bla',t)$ to get
$$
\#S_d(B;\bxi)\ll Y^{n-3+\ve}\left(1+ \frac{Y}{s_n}\right).
$$
Now it follows from the general properties of the successive minima recorded above that $s_n\geq
(\det \Lambda_\a)^{\frac{1}{n}}\gg 
d^{\frac{1}{n}}$.  Recalling that $Y=2d^{-1}B$ and inserting this into \eqref{m:6}, we conclude that 
$$
N_d^*(B)\ll \rho(d)\left(\frac{B}{d}\right)^{n-3+\ve}
\left(1+ \frac{B}{d^{1+\frac{1}{n}}}\right).
$$
The conclusion of the lemma  therefore follows from Hypothesis-$\rho$.
\end{proof}

\section{Preliminary transformation of $S(B)$} 
\label{prelim}

In this section we initiate our analysis of $S(B)$ in \eqref{eq:main-sum}.
For any odd integer $M$ it is clear that $r(M)=0$ unless $M\equiv 1 \bmod{4}$. Hence our sum can be written
$$
S(B)=
\sum_{\substack{\mathbf x \in \mathbb Z^n\\ Q_1(\x)\equiv 1 \bmod{4} \\
Q_2(\x)=0}}r(Q_1(\mathbf
x)) W\left(\frac{\mathbf x}{B}\right).
$$
We proceed to open up the $r$-function in the summand.
Let $\{V_T(t)\}_{T}$ be a collection of smooth functions, with $V_T$
supported in the dyadic block $[T,2T]$, such that $\sum_TV_T(t)=1$
for $t\in[1,CB^2]$. The constant $C$ will be large enough depending on
$Q_1$ and $W$, so that $|Q_1(\mathbf x)|\leq C$ whenever $\mathbf
x\in \supp(W)$. We will neither specify the function $V_T$ nor
the indexing set for $T$.  However we will simply note that $T$  can be restricted
to lie in the interval  $[\frac{1}{2},2CB^2]$, and that there are $O(\log B)$
many functions in the collection. Moreover we will stipulate that  
$$
t^jV^{(j)}_T(t)\ll_j 1,
$$ 
for each integer $j\geq 0$.
For a positive integer $M\leq CB^2$ we may write
\begin{align*}
r(M)=4\sum_T\sum_{d\mid M}\chi(d)V_T(d).
\end{align*}
It follows that
$$
S(B)=4\sum_T\sum_{d}\chi(d)V_T(d)
\sum_{\substack{\mathbf x \in \mathbb Z^n\\
Q_1(\x)\equiv 1\bmod{4}\\ 
Q_1(\x)\equiv 0
    \bmod{d}\\Q_2(\x)=0}} 
W\left(\frac{\mathbf x}{B}\right)=4\sum_T S_T(B),
$$  
say.
Let $\a\in \left(\mathbb Z/4\mathbb Z\right)^n$ be such that
$Q_1(\a)\equiv 1 \bmod{4}$, and let $S_{T,\a}(B)$ be the part of
$S_T(B)$ which comes from $\x\equiv \a \bmod{4}$.

In the analysis of  $S_{T,\a}(B)$ we want to arrange things so that only values of $d$ satisfying 
$d\ll B$ occur. 
When $T\leq B$ this is guaranteed by the presence of the factor $V_T(d)$.
When $T>B$ we 
can use Dirichlet's hyperbola trick, since 
$\chi(Q_1(\x))=\chi(Q_1(\a))=1$, to get 
\begin{align*}
S_{T,\a}(B)=\sum_{d}\chi(d)
\sum_{\substack{\x \equiv \a \bmod{4}\\ Q_1(\x)\equiv 0 \bmod{d}\\Q_2(\x)=0}}
W\left(\frac{\mathbf x}{B}\right)V_T\left(\frac{Q_1(\x)}{d}\right).
\end{align*}
In this case too we therefore have $d\ll B$.
For notational simplicity we write
\begin{align}
\label{eq:W_d}
W_d\left(\mathbf y\right)=
\begin{cases} 
W\left(\mathbf y\right)V_T(d), &\mbox{if $T\leq B$,}\\
W\left(\mathbf y\right)V_T\left(\frac{B^{2}Q_1(\y)}{d}\right),
&\mbox{otherwise}.
\end{cases}
\end{align}
Here  
$W: \RR^{n}\rightarrow \RR_{\geq 0}$ is an infinitely
differentiable bounded function of compact support such that 
$Q_1(\x)\gg 1$ and $\nabla Q_1(\x)\gg 1$,
for some absolute implied constant, for every $\x \in
\supp(W)$.

As already indicated, the exponential sums 
\eqref{eq:S'} will be prominent in our work. 
We will face significant technical issues in dealing with large values of $d$ in $S_{T,\a}(B)$ 
which share prime factors with the constant 
 $\Delta_V$ that was introduced  at the close of \S \ref{geometry}.  
The following expression for $S(B)$ is now available.

\begin{lemma}
\label{S(B)}
Let  $\Xi$ be a parameter satisfying $1\leq \Xi\leq B$.
Then we have
$$
S(B)=
4\sum_T
\sum_{\substack{
\a\in (\mathbb Z/4\mathbb Z)^n\\
Q_1(\a)\equiv 1 \bmod{4}}}
\left(S_{T,\a}^\flat(B)+
S_{T,\a}^\sharp(B)\right),
$$
with 
\begin{align*}
S_{T,\a}^\flat(B)
&=\sum_{\substack{d=1\\ (d,\Delta_V^\infty)>\Xi}}^\infty
\chi(d)
\sum_{\substack{\x \equiv \a \bmod{4}\\ Q_1(\x)\equiv 0 \bmod{d}\\Q_2(\x)=0}}
W_d\left(\frac{\mathbf x}{B}\right),\\
S_{T,\a}^\sharp(B)
&=
\sum_{\substack{d=1\\ (d,\Delta_V^\infty)\leq \Xi}}^\infty
\chi(d)
\sum_{\substack{\x \equiv \a \bmod{4}\\ Q_1(\x)\equiv 0 \bmod{d}\\Q_2(\x)=0}}
W_d\left(\frac{\mathbf x}{B}\right).
\end{align*}
\end{lemma}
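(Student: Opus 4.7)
The statement is essentially a bookkeeping consolidation of the transformations described in the preceding paragraphs, together with a dichotomy on $(d,\Delta_V^\infty)$. I would argue in three stages.

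First, I would start from the identity $r(M)=4\sum_{d\mid M}\chi(d)$ combined with the partition of unity $\sum_T V_T(d)\equiv 1$ on the effective support $d\leq C B^2$, exactly as already recorded. Since $r$ vanishes on integers not congruent to $1\bmod 4$, the sum may be further restricted to $\x$ with $Q_1(\x)\equiv 1\bmod 4$, and then partitioned according to the residue $\a\in(\ZZ/4\ZZ)^n$ of $\x$ modulo $4$, yielding $S(B)=4\sum_T\sum_{\a}S_{T,\a}(B)$ with $\a$ ranging as in the statement.

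Second, I would verify that in each term $S_{T,\a}(B)$ the inner sum over $d$ is effectively supported on $d\ll B$. For $T\leq B$ this is automatic, since the factor $V_T(d)$ in the expansion confines $d$ to $[T,2T]\subset[\tfrac12,2B]$. For $T>B$ I would apply Dirichlet's hyperbola trick to the inner divisor sum: because $Q_1(\x)\equiv 1\bmod 4$ forces $\chi(Q_1(\x))=1$, the involution $d\mapsto Q_1(\x)/d$ on divisors of $Q_1(\x)$ satisfies $\chi(Q_1(\x)/d)=\chi(d)$, whence
\[
\sum_{d\mid Q_1(\x)}\chi(d)V_T(d)=\sum_{d\mid Q_1(\x)}\chi(d)V_T\!\left(\tfrac{Q_1(\x)}{d}\right).
\]
Since $V_T$ is supported in $[T,2T]$ and $Q_1(\x)\leq C B^2$ on $\supp(W)$, the new variable $d$ satisfies $d\ll B^2/T\ll B$, as required. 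This is precisely the substitution that motivates the two cases in the definition \eqref{eq:W_d} of $W_d$: the $T\leq B$ case absorbs the factor $V_T(d)$ into the weight, while the $T>B$ case absorbs $V_T(Q_1(\x)/d)=V_T(B^2Q_1(\x/B)/d)$.

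Third, with $S_{T,\a}(B)$ rewritten in the uniform form
\[
S_{T,\a}(B)=\sum_{d=1}^\infty\chi(d)\sum_{\substack{\x\equiv\a\bmod 4\\ Q_1(\x)\equiv 0\bmod d\\ Q_2(\x)=0}}W_d\!\left(\tfrac{\x}{B}\right),
\]
the decomposition of the lemma is obtained by simply splitting the outer sum according as $(d,\Delta_V^\infty)>\Xi$ or $(d,\Delta_V^\infty)\leq\Xi$; the hypothesis $\Xi\leq B$ merely ensures both ranges are nonempty in principle. There is no genuine analytic obstacle in this step; the only point that requires care is checking the sign/character identity used in the hyperbola swap, which hinges on the real-valuedness of $\chi$ and the congruence $Q_1(\x)\equiv 1\bmod 4$.
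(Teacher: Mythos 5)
Your proof is correct and follows essentially the same route as the paper: restrict to $Q_1(\x)\equiv 1\bmod 4$, open $r$ via $r(M)=4\sum_T\sum_{d\mid M}\chi(d)V_T(d)$, split by $\a\bmod 4$, use the hyperbola swap $d\mapsto Q_1(\x)/d$ (valid since $\chi(Q_1(\x))=1$) to force $d\ll B$ when $T>B$, and then split the $d$-sum according to the size of $(d,\Delta_V^\infty)$. The paper treats the lemma as an immediate consequence of the preceding text; your write-up simply spells out the same steps, with a slightly more explicit verification of the character identity underlying the hyperbola trick.
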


We will provide an upper bound for 
$S_{T,\a}^\flat(B)$ and an asymptotic formula for 
$S_{T,\a}^\sharp(B)$, always assuming that $\Xi$ satisfies $1\leq \Xi\leq B$.
The following result deals with the first task.

\begin{lemma}\label{lem:flat}
Let $\ve>0$ and assume 
Hypothesis-$\rho$.
Then we have 
$$S_{T,\a}^\flat(B)\ll \Xi^{-\frac{1}{n}}B^{n-2+\ve}+
\Xi B^{n-3+\ve}.
$$
\end{lemma}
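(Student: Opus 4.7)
The plan is to open up the absolute value and swap the order of summation so that $\x$ becomes the outermost variable. For a fixed $\x$ with $Q_2(\x)=0$ and $|\x|\ll B$, each $d$ contributing to the inner sum factorises uniquely as $d=ef$ with $e=(d,\Delta_V^\infty)$ and $(f,\Delta_V)=1$; the conditions $d\mid Q_1(\x)$ and $(d,\Delta_V^\infty)>\Xi$ become $e\mid g(\x)$, $e>\Xi$ and $f\mid Q_1(\x)/e$, where $g(\x):=(Q_1(\x),\Delta_V^\infty)$. Bounding the $\Delta_V^\infty$-smooth $e$'s via \eqref{eq:scat} and the $f$'s via the standard divisor bound, the number of admissible $d$ is $\ll B^\ve$, so $|\chi(d)W_d|\ll 1$ yields
\[
|S_{T,\a}^\flat(B)|\ll B^\ve\cdot\#\{\x\in\ZZ^n:\ |\x|\ll B,\ Q_2(\x)=0,\ g(\x)>\Xi\}.
\]

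The crux of the argument is the following elementary observation. Let $P$ denote the largest prime divisor of $\Delta_V$; then any two consecutive divisors $e_j<e_{j+1}$ of a $\Delta_V^\infty$-smooth positive integer $g$ satisfy $e_{j+1}/e_j\le P$, since whenever $e_j<g$ there exists some prime $p\mid g$ with $v_p(e_j)<v_p(g)$, and then $pe_j$ is a divisor of $g$ with $pe_j\le Pe_j$. Applying this to $g=g(\x)>\Xi$, and choosing $e_j$ to be the largest divisor of $g(\x)$ at most $\Xi$, furnishes a divisor $e\mid g(\x)$ satisfying $\Xi<e\le P\Xi$ and $e\mid\Delta_V^\infty$. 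Consequently
\[
\#\{\x:|\x|\ll B,\ Q_2(\x)=0,\ g(\x)>\Xi\}\le \sum_{\substack{e\mid\Delta_V^\infty\\ \Xi<e\le P\Xi}}N_e(B),
\]
with $N_e(B)$ as in \eqref{eq:def-Ne}.

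The proof will then be concluded by bounding the right-hand side. By \eqref{eq:scat} the number of $e$ in the dyadic window $(\Xi,P\Xi]$ is $\ll\Xi^\ve$. When $P\Xi\le B$ one applies Lemma~\ref{lem:m:4} to each $e\sim\Xi$ to get $N_e(B)\ll B^{n-2+\ve}/\Xi^{1/n}+\Xi B^{n-3+\ve}$, and after absorbing the $\ve$-losses this produces the claimed estimate. In the complementary range $P\Xi>B$ one has $\Xi\asymp B$, so the claim already dominates $B^{n-2+\ve}$; this is in turn supplied by the trivial estimate $|S_{T,\a}^\flat(B)|\ll B^\ve\cdot M(Q_2;B)\ll B^{n-2+\ve}$ coming from Lemma~\ref{m:5} applied with $f=Q_2$. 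The only delicate ingredient is the divisor-spread observation in the second paragraph: without the localisation of $e$ into a dyadic window of bounded ratio around $\Xi$, a direct summation of the Lemma~\ref{lem:m:4} bound over all $\Delta_V^\infty$-smooth $e>\Xi$, together with their coprime-to-$\Delta_V$ completions $f$, would lose a factor of roughly $B^{1-1/n}\Xi^{-1}$, strictly weaker than what is required.
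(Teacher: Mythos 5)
Your proof is correct and rests on the same two ingredients as the paper's: the observation that a $\Delta_V^\infty$-smooth integer exceeding $\Xi$ has a divisor in the window $(\Xi, c\Xi]$ with bounded ratio $c$, and an application of Lemma~\ref{lem:m:4} to $N_e(cB)$ for $e$ in that window. The paper reaches the same place by a mildly different bookkeeping: it factors out $e=(d,\Delta_V^\infty)$ first, arrives at $B^\ve\sum_{e\mid\Delta_V^\infty,\ \Xi<e\ll B}N_e(cB)$ after using $\tau(Q_1(\x)/e)\ll B^\ve$, and then localizes via the monotonicity $N_e\le N_{e'}$ for $e'\mid e$ together with the divisor-gap fact; you instead fix $\x$, show directly that each $\x$ with $g(\x)>\Xi$ contributes $O(B^\ve)$ and lies in $N_e(cB)$ for some $e\in(\Xi,P\Xi]$. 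These are equivalent rearrangements of the same argument. Two small cosmetic points: your $N_e(B)$ should be $N_e(cB)$ since the support condition only gives $|\x|\ll B$, and the separate case $P\Xi>B$ is not strictly necessary once the constant in $N_e(cB)$ is taken $\ge P$ (since $\Xi\le B$ is assumed), though treating it via Lemma~\ref{m:5} is also fine.
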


\begin{proof}
Write
$e=(d,\Delta_V^\infty)$.  Then 
\begin{align*}
|S_{T,\a}^\flat(B)|
&\leq 
\sum_{\substack{e\mid \Delta_V^\infty\\ 
e>\Xi}}
\sum_{\substack{d=1}}^\infty
\sum_{\substack{\x \equiv \a \bmod{4}\\ Q_1(\x)\equiv 0 \bmod{de}\\Q_2(\x)=0}}
W_{de}\left(\frac{\mathbf x}{B}\right).
\end{align*}
By the properties of \eqref{eq:W_d}, only $d,e$ satisfying $de\ll B$ feature here.
Inverting the sums over $d$ and $\x$, we obtain 
\begin{align*}
S_{T,\a}^\flat(B)
&\ll
\sum_{\substack{e\mid \Delta_V^\infty\\ 
\Xi<e\ll B}}
\sum_{\substack{
|\x|\ll B\\
\x \equiv \a \bmod{4}\\ Q_1(\x)\equiv 0 \bmod{e}\\Q_2(\x)=0}}
\tau\left(\frac{Q_1(\x)}{e}\right),
\end{align*}
where $\tau$ is the divisor function.
Note that $Q_1(\x)\neq 0$, since $Q_1(\x)\equiv Q_1(\a)\equiv 1 \bmod{4}$, 
so that the inner summand is $O(B^\ve)$ by the trivial estimate for $\tau$. 
Hence we have 
\begin{equation}\label{eq:train}
S_{T,\a}^\flat(B)
\ll B^\ve
\sum_{\substack{e\mid \Delta_V^\infty\\ 
\Xi<e\leq cB}}
N_e(cB),
\end{equation}
for an absolute constant $c>0$, in the notation of \eqref{eq:def-Ne}.

We will make crucial use of the monotonicity property
$
N_e(cB)\leq N_d(cB)
$
for $d\mid e$.
Suppose that we have a factorisation
$\Delta_V=\prod_{i=1}^{t} p_i$. For $\mathbf{n}\in \ZZ_{\geq 0}^t$, let
$\mathbf{p}^{\mathbf{n}}=\prod_{i=1}^t p_i^{n_i}$. 
Consider a collection of integers $\mathcal B=\{\mathbf{p}^{\mathbf{n}}:
\mathbf{n}\in \ZZ_{\geq 0}^t\}$
and set 
$
\mathcal B(A_1,A_2)=\mathcal B\cap(A_1,A_2].
$
It follows from
\eqref{eq:scat} that  $\mathcal{B}$ contains $O(B^\ve)$ elements of order $B$.
In this new notation the sum in \eqref{eq:train} is over 
$e\in \mathcal B(\Xi,cB)$.  
We claim that 
\begin{align*}
S_{T,\a}^\flat(B)
&\ll B^\ve
\sum_{e\in \mathcal B(
\Xi,\Delta_V \Xi)} N_e(cB).
\end{align*}
Once achieved, the statement of the lemma will then follow from 
Lemma \ref{lem:m:4}.

By the monotonicity property, 
in order to establish the claim
it will suffice to show that every $e \in \mathcal B(\Xi, cB)$ has a divisor $e'\mid e$, with $e' \in
 \mathcal B(\Xi, \Delta_V \Xi)$. To see this we 
suppose that $e=\mathbf{p}^{\mathbf{n}}$ and consider the 
decreasing sequence of divisors of $e$. This sequence ends at $1$, and the ratio between any two consecutive members is bounded by $\Delta_V$ . Thus one of the divisors must lie in the range $(\Xi, \Delta_V \Xi]$, as required.  
This completes the proof of the lemma.
\end{proof}

Turning to $S_{T,\a}^\sharp(B)$, we 
now need a means of detecting the equation $Q_2(\x)=0$.
For any integer $M$
let
$$
\delta(M)=\begin{cases}
1,& \mbox{if $M=0$,}\\ 
0,&\mbox{otherwise}.
\end{cases}
$$
Our primary tool in this endeavour will be a version of the circle method developed by
Heath-Brown \cite{H}, based on work of Duke, Friedlander and Iwaniec \cite{DFI}.  
The starting point for this  is the following 
smooth approximation of $\delta$.

\begin{lemma}
\label{dfi}
For any $Q>1$ there is a positive constant $c_Q$, and a smooth
function $h(x,y)$ defined on $(0,\infty)\times\mathbb R$, 
such that
\begin{align*}
\delta(M)=\frac{c_Q}{Q^2}\sum_{q=1}^{\infty}\;\sideset{}{^{*}}\sum_{a
  \bmod{q}}e_q(aM)h\left(\frac{q}{Q},\frac{M} 
{Q^2}\right).
\end{align*}
The constant $c_Q$ satisfies $c_Q=1+O_N(Q^{-N})$ for any $N>0$. Moreover $h(x,y)\ll x^{-1}$ for all $y$, and 
$h(x,y)$ is non-zero only for $x\leq\max\{1,2|y|\}$. 
\end{lemma}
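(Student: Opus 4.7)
The plan is to follow Heath-Brown's construction from \cite{H}. I would begin by fixing an even, smooth, non-negative function $w$ with compact support bounded away from zero, say $\supp(w) \subset [\tfrac{1}{2}, 1]$ with $w \not\equiv 0$, normalized so that an appropriate integral of $w$ equals $1$. The function $h$ would then be defined explicitly as
\begin{equation*}
h(x, y) = \sum_{j=1}^{\infty} \frac{1}{xj}\Bigl\{w(xj) - w\Bigl(\frac{|y|}{xj}\Bigr)\Bigr\},
\end{equation*}
and $c_Q$ as the reciprocal of the normalizing quantity produced when one substitutes $M = 0$ into the claimed identity.

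Next, I would verify the identity itself. The key input is the standard Ramanujan-sum evaluation
\begin{equation*}
\sideset{}{^{*}}\sum_{a\bmod{q}} e_q(aM) = \sum_{d \mid (q, M)} d\,\mu(q/d),
\end{equation*}
which, inserted into the right-hand side of the claimed formula, turns the double sum over $q$ and $a$ into a sum over divisors of $M$. The definition of $h$ as a difference $w(xj) - w(|y|/(xj))$ is tailored so that after swapping the order of summation, the divisors $q$ of $M$ pair with $|M|/q$ and their contributions cancel when $M \neq 0$. The remaining contribution (from $M = 0$, where the cancellation fails) is precisely $c_Q^{-1}$ by the choice of normalization, reproducing $\delta(M)$.

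The three stated properties of $h$ and $c_Q$ would then be checked by direct computation. With $\supp(w) \subset [\tfrac{1}{2}, 1]$, each term $w(xj)$ in the series for $h(x,y)$ is nonzero only when $xj \leq 1$, forcing $x \leq 1$ for any $j \geq 1$ to contribute; likewise each term $w(|y|/(xj))$ is nonzero only when $|y|/(xj) \leq 1$, i.e.\ $xj \geq |y|$, together with $xj \leq 2|y|$, so that there exists $j \geq 1$ contributing only when $x \leq 2|y|$. Taken together this gives the stated support condition $x \leq \max\{1, 2|y|\}$. The bound $h(x, y) \ll x^{-1}$ is immediate from $\|w\|_\infty \ll 1$ and the fact that only $O(1)$ values of $j$ contribute. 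The asymptotic $c_Q = 1 + O_N(Q^{-N})$ follows from Poisson summation in $j$ applied to the defining series for $c_Q^{-1}$, where the smoothness of $w$ provides arbitrarily rapid decay and isolates the leading term.

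The main obstacle is the exact cancellation step for $M \neq 0$, which must dispose of contributions from all divisors without remainder. This requires the definition of $h$ to be set up in perfect coordination with the M\"obius-inverted Ramanujan-sum expression, and is the essential content of Heath-Brown's construction in \cite{H}, itself a refinement of the identity of Duke, Friedlander and Iwaniec \cite{DFI}. Once the identity is established, the properties of $h$ and $c_Q$ are essentially automatic from the definitions.
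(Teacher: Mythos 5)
The paper does not prove Lemma~\ref{dfi}; it is quoted verbatim from Heath-Brown \cite{H} (where it appears as his Theorem~1, itself a smoothing of the Duke--Friedlander--Iwaniec identity \cite{DFI}). Your reconstruction of Heath-Brown's argument is accurate and follows the same route: you take a smooth $w$ supported in $[\tfrac12,1]$ with $\int w = 1$, define
$h(x,y)=\sum_{j\geq 1}\frac{1}{xj}\bigl(w(xj)-w(|y|/(xj))\bigr)$,
insert the Ramanujan-sum evaluation, and after the substitution $q=de$ and M\"obius inversion in $\sum_{e\mid k}\mu(e)$ the whole right-hand side collapses to $\frac{c_Q}{Q}\sum_{d\mid M}\bigl(w(d/Q)-w(|M|/(Qd))\bigr)$, which vanishes for $M\neq 0$ by the divisor pairing $d\leftrightarrow |M|/d$ and, for $M=0$, fixes $c_Q^{-1}=\frac1Q\sum_{d\geq1}w(d/Q)$, whose $1+O_N(Q^{-N})$ behaviour follows from Poisson summation. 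That is exactly the content of \cite{H}.

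One small imprecision: the bound $h(x,y)\ll x^{-1}$ is \emph{not} because only $O(1)$ values of $j$ contribute. For small $x$ the first part of the series has $\asymp x^{-1}$ contributing indices $j\in[1/(2x),\,1/x]$, and the second part has $\asymp |y|/x$ of them. Rather, each part is a harmonic segment: $\sum_{j\asymp 1/x}\frac{1}{xj}w(xj)\ll \frac{1}{x}\|w\|_\infty\log 2$, and in the second part $xj\geq |y|$ forces $\frac{1}{xj}\leq \frac{1}{|y|}$ and $j\geq\max(1,|y|/x)$, yielding again $O(x^{-1})$. The conclusion is correct, but the counting-of-terms justification as stated would not give the uniform estimate.
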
 

In practice, to detect the equation $M=0$ for a sequence of integers
in the range $|M|<N/2$, it is 
logical to choose $Q=N^{\frac{1}{2}}$. 
We will use the above lemma to detect the equality $Q_2(\x)=0$ in
$S_{T,\a}^\sharp(B)$. Since we already have the modulus $d$ 
in the sum over $\x$ it is reasonable to use this modulus to reduce the
size of the parameter $Q$. Thus we replace 
the equality $Q_2(\x)=0$ by the congruence $Q_2(\x)\equiv 0 \bmod{d}$
and the equality $Q_2(\x)/d=0$. Then  
we have
\begin{align*}
S_{T,\a}^\sharp(B)&=
\sum_{\substack{d=1\\ (d,\Delta_V^\infty)\leq \Xi}}^\infty
\chi(d)
\sum_{\substack{\x \equiv \a \bmod{4}\\ Q_1(\x)\equiv 0\bmod{d}\\Q_2(\x)\equiv 0\bmod{d}}}
\delta\left(\frac{Q_2(\x)}{d}\right)W_d\left(\frac{\x}{B}\right)\\
&=
\sum_{\substack{d=1\\ (d,\Delta_V^\infty)\leq \Xi}}^\infty
\frac{\chi(d)c_Q}{Q^2}
\sum_{q=1}^{\infty}\;\sideset{}{^{*}}\sum_{a\bmod{q}}
\sum_{\substack{\x \equiv \a \bmod{4}\\ Q_1(\x)\equiv 0\bmod{d}\\Q_2(\x)\equiv 0\bmod{d}}}
\hspace{-0.4cm}
e_q\left(\frac{aQ_2(\x)}{d}\right)h\left(\frac{q}{Q},\frac{Q_2(\x)}{dQ^2}\right)W_d\left(\frac{\x}{B}\right).
\end{align*}
We shall  make the choice
$$
Q= 
\frac{B}{\sqrt{d}}.
$$
Since $d\ll B$, it follows that  $Q\gg \sqrt{B}$.

With our choice of $Q$ made we remark that 
the size of the full modulus $qd$ 
is typically of order $B^{\frac{3}{2}}$. Since this is much smaller than the
square of the 
length of each $x_i$ summation, it will be be profitable to use the
Poisson summation formula on the sum over $\x$.

\begin{lemma}
\label{psum}
For any $N>0$ we have
$$
S_{T,\a}^\sharp(B)=\left(1
+O_N(B^{-N})\right)
\frac{B^{n-2}}{4^n}\sum_{\m\in \mathbb
  Z^n}
  \sum_{\substack{d=1\\ (d,\Delta_V^\infty)\leq \Xi}}^\infty
\hspace{-0.2cm}
\frac{\chi(d)}{d^{n-1}}
  \sum_{q=1}^\infty\frac{1}{q^n} 
T_{d,q}(\m)I_{d,q}(\m),
$$
where 
$$
T_{d,q}(\m)=\sideset{}{^{*}}\sum_{a\bmod{q}}
\sum_{\substack{\mathbf k \bmod{4dq}\\
\k \equiv \a \bmod{4}\\ Q_1(\k)\equiv 0\bmod{d}\\Q_2(\k)\equiv 0\bmod{d}}}
e\left(\frac{4aQ_2(\k)+\m.\k}{4dq}\right)
$$
and 
$$
I_{d,q}(\m)=\int_{\mathbb R^n}
h\left(\frac{q}{Q},\frac{B^2Q_2(\y)}{dQ^2}\right)W_d(\y)
e_{4dq}(-B\m.\y)\d \mathbf y.
$$
\end{lemma}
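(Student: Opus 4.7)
The plan is to apply the $n$-dimensional Poisson summation formula to the $\x$-sum in the expression for $S_{T,\a}^\sharp(B)$ displayed just before the lemma. Since $\chi(d)$ is the non-principal character modulo $4$, we may assume $d$ is odd. The key observation is that, because $Q_2$ is a quadratic form,
\[
Q_2(\x + 4dq\,\y) = Q_2(\x) + 2(4dq)\,\x^T\M_2\y + (4dq)^2 Q_2(\y) \equiv Q_2(\x) \!\!\pmod{4dq},
\]
and similarly for $Q_1$. Hence the phase $e_q(aQ_2(\x)/d) = e_{4dq}(4aQ_2(\x))$ and all three congruences $\x\equiv \a \bmod{4}$ and $Q_i(\x)\equiv 0 \bmod{d}$ depend only on the residue class $\k := \x \bmod{4dq}$.

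First I would decompose the $\x$-sum as an outer sum over admissible residues $\k \bmod{4dq}$, followed by an inner sum $\sum_{\x\equiv \k \bmod{4dq}} F(\x)$, where
\[
F(\x) = h\!\left(\frac{q}{Q},\,\frac{Q_2(\x)}{dQ^2}\right) W_d\!\left(\frac{\x}{B}\right).
\]
Second, Poisson summation on the lattice $4dq\cdot \ZZ^n$ gives
\[
\sum_{\x\equiv \k \bmod{4dq}} F(\x) = \frac{1}{(4dq)^n}\sum_{\m\in \ZZ^n} e_{4dq}(\m\cdot \k)\,\widehat F\!\left(\frac{\m}{4dq}\right).
\]
A change of variables $\x = B\y$ in $\widehat F$, together with the homogeneity $Q_2(B\y) = B^2 Q_2(\y)$, yields $\widehat F(\m/(4dq)) = B^n\, I_{d,q}(\m)$. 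Plugging this back in, the sum over $a$ and $\k$ of the combined phase $e_{4dq}(4aQ_2(\k) + \m\cdot\k)$ is precisely $T_{d,q}(\m)$, and the prefactor $c_Q B^n /(Q^2 (4dq)^n)$ with $Q^2 = B^2/d$ collapses to $B^{n-2}/(4^n d^{n-1} q^n)$, while $c_Q = 1 + O_N(Q^{-N}) = 1 + O_N(B^{-N/2})$ supplies the stated multiplicative error (since $Q\gg \sqrt{B}$, we may absorb the power of $\sqrt{2}$ into $N$).

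The main technical point is justifying the exchange of summation order and the convergence of the dual $\m$-sum. Since $W$ is smooth of compact support and $h$ is smooth in its arguments, $F$ is Schwartz on $\RR^n$ for each fixed $(d,q)$, so repeated integration by parts gives $\widehat F(\bxi) \ll_{d,q,N} (1+|\bxi|)^{-N}$ for every $N$. Combined with the support property of $h$, which forces the effective range of $q$ to satisfy $q \ll Q B^2 / d$ and in particular keeps $q$ polynomially bounded in $B$, the whole manipulation is absolutely convergent and the Fubini-style exchanges are legitimate. I do not anticipate any deep obstacle here: this is the standard Poisson step in the delta-symbol circle method, the only mildly unusual feature being the simultaneous encoding of the three congruences modulo $4$ and modulo $d$ into residues modulo the larger Poisson modulus $4dq$.
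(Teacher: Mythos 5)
Your proposal is correct and follows essentially the same route as the paper: split the $\x$-sum into residue classes modulo $4dq$, apply Poisson summation, and rescale via $\x = B\y$ to pull out $B^n I_{d,q}(\m)$, with the prefactor simplification $c_Q B^n / (Q^2 (4dq)^n) = c_Q B^{n-2}/(4^n d^{n-1} q^n)$ identical to the paper's. The paper carries out the formal manipulations without explicitly addressing the Fubini exchanges, whereas you supply a brief justification via the rapid decay of $\widehat F$; this is a minor addition and does not alter the structure of the argument.
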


\begin{proof}
Splitting the sum over $\mathbf x$ into residue classes 
modulo $4dq$, we get that the inner sum over $\x$ in our expression for $S_{T,\ma{a}}^\sharp(B)$ is given by
\begin{align*}
\sum_{\substack{\mathbf k \bmod{4dq}\\
\k \equiv \a \bmod{4}\\ Q_1(\k)\equiv 0\bmod{d}\\Q_2(\k)\equiv 0\bmod{d}}}
e\left(\frac{aQ_2(\k)}{qd}\right)
\sum_{\x\in \mathbb Z^n}f(\x),
\end{align*}
where 
$$
f(\x)=h\left(\frac{q}{Q},\frac{Q_2(\k+ 4dq\x)}{dQ^2}\right)W_d\left(\frac{\k+4dq\x}{B}\right).
$$ 
The  Poisson summation
formula yields
\begin{align*}
\sum_{\x\in \mathbb Z^n}f(\x)=\sum_{\m\in \mathbb Z^n}\hat f(\m),
\end{align*}
where
\begin{align*}
\hat f(\m)&=\int_{\mathbb R^n}f(\mathbf y)e(-\m.\mathbf y)\d\mathbf y\\
&=\left(\frac{B}{4dq}\right)^ne_{4dq}(\m.\k)\int_{\mathbb R^n}
h\left(\frac{q}{Q},\frac{B^2Q_2(\y)}{dQ^2}\right)W_d\left(\y\right)
e_{4dq}(-B\m.\y)\d\mathbf y.
\end{align*}
The lemma follows on 
rearranging 
and noting that 
$c_Q=
1
+O_N(B^{-N})$ and $Q^2=B^2/d$.
\end{proof} 

In this and the next few sections, we will analyse in detail the
exponential  
sum $T_{d,q}(\m)$ which appears in Lemma \ref{psum}. 
We start with a multiplicativity relation which reduces the problem to
analysing the sum for a prime power modulus. Observe that $d$
is necessarily odd, but $q$ can be of either parity.  
For any $d,q\in \NN$ we recall the definition \eqref{eq:S'} of 
$S_{d,q}(\m)$, and for any non-negative integer $\ell$ define 
\begin{align}
\label{eq:Sell}
S^{\pm}_{1,2^{\ell}}
(\m)=
\sideset{}{^{*}}\sum_{a\bmod{2^{\ell}}}
\sum_{\substack{\mathbf k \bmod{2^{2+\ell}}\\
\k\equiv \pm \ma{a}\bmod{4}}}
e_{2^{2+\ell}}\left(4aQ_2(\k)+\m.\k \right).
\end{align}
We  note that if $h\in \NN$ is coprime to $d$ and $q$ then 
$S_{d,q}(h\m)=S_{d,q}(\m)$. The  following result is now available.

\begin{lemma}\label{lem:mult1}
For $q=2^{\ell}q'$,  with $q'$ odd,  we have
\begin{align*}
T_{d,q}(\m)=S_{d,q'}(\m)S^{\chi(dq')}_{1,2^{\ell}}(\m).
\end{align*}
\end{lemma}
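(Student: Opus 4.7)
The plan is to apply the Chinese Remainder Theorem twice: once to split the modulus $4dq = 2^{\ell+2} \cdot dq'$ into its $2$-part and odd part, and once to split $(\ZZ/q\ZZ)^* \cong (\ZZ/2^\ell\ZZ)^* \times (\ZZ/q'\ZZ)^*$. The sum $T_{d,q}(\m)$ will then factor as a product of an ``odd'' piece, which I shall identify with $S_{d,q'}(\m)$, and a ``$2$-power'' piece, which I shall identify with $S^{\chi(dq')}_{1,2^\ell}(\m)$.

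Concretely, I would parameterise $\k \bmod 4dq$ by the CRT pair $(\k_1,\k_2)=(\k \bmod dq', \k \bmod 2^{\ell+2})$ and $a \bmod q$ by $(\al,\be) \in (\ZZ/2^\ell \ZZ)^* \times (\ZZ/q'\ZZ)^*$. Since $d$ is odd, the congruences $Q_1(\k) \equiv Q_2(\k) \equiv 0 \bmod d$ restrict only $\k_1$, while $\k \equiv \a \bmod 4$ restricts only $\k_2$. The standard identity
$$
\frac{1}{4dq} \equiv \frac{\bar{dq'}_{2^{\ell+2}}}{2^{\ell+2}} + \frac{\bar{2^{\ell+2}}_{dq'}}{dq'} \pmod 1,
$$
combined with the simplifications $4\bar{2^{\ell+2}}_{dq'} \equiv \bar{2^\ell}_{dq'} \pmod{dq'}$ and $e_{2^{\ell+2}}(4\al X) = e_{2^\ell}(\al X)$, then breaks the exponential $e_{4dq}(4aQ_2(\k)+\m.\k)$ as an $e_{dq'}$ factor depending only on $(\be,\k_1)$ times an $e_{2^{\ell+2}}$ factor depending only on $(\al,\k_2)$.

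To identify the odd factor with $S_{d,q'}(\m)$, I would apply the bijections $\k_1 \mapsto 2^{\ell+2}\k_1^*$ on $(\ZZ/dq'\ZZ)^n$ and $\be \mapsto 2^{\ell+4}\be^{**}$ on $(\ZZ/q'\ZZ)^*$, which absorb every inverse symbol and reproduce the defining exponent $\be^{**} Q_2(\k_1^*) + \m.\k_1^*$ exactly. For the $2$-power factor, the bijection $\k_2 \mapsto dq'\k_2'$ on $(\ZZ/2^{\ell+2}\ZZ)^n$ converts $\bar{dq'}_{2^{\ell+2}}\m.\k_2$ into $\m.\k_2'$ and, crucially, transports the congruence $\k_2 \equiv \a \bmod 4$ into $\k_2' \equiv dq'\a \equiv \chi(dq')\a \bmod 4$, since $dq'$ is odd; this is the sole source of the character $\chi(dq')$ in the claimed identity. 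A concluding bijection $\al \mapsto \bar{dq'}_{2^\ell}\al'$ then reproduces $S^{\chi(dq')}_{1,2^\ell}(\m)$.

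The main technical obstacle is verifying that each factor of the split exponential is a well-defined function on the restricted range of $a$. For example, the expression $\bar{2^\ell}_{dq'} a Q_2(\k_1) \bmod dq'$ appears to depend on $a$ modulo $dq'$, whereas $a$ is specified only modulo $q = 2^\ell q'$; this is reconciled by the condition $Q_2(\k_1) \equiv 0 \bmod d$, which forces the ambiguity produced by shifting $a$ by $q'$ to vanish modulo $dq'$. Analogous checks are needed at each stage when the various inverses are absorbed by change of variable. Once these consistency verifications are dispatched, the remainder of the argument is mechanical bookkeeping. The degenerate case $\ell = 0$ (where the $\al$-sum is trivial and $\k_2$ is forced to equal $\a$) slots into the same framework.
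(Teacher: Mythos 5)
Your proposal is correct and follows essentially the same route as the paper: a CRT split of the modulus $4dq=2^{\ell+2}\cdot dq'$ and of $a\bmod q$ into its $2$-part and odd part, followed by changes of variable in $\k$ and $a$ that absorb the inverse factors and transport $\k\equiv\a\bmod 4$ to $\k'\equiv\chi(dq')\a\bmod 4$. The only blemish is a small notational inconsistency in the pair of bijections for the odd factor (your $\k_1\mapsto 2^{\ell+2}\k_1^*$ and $\be\mapsto 2^{\ell+4}\be^{**}$ are written with opposite conventions for which side is the new variable), but the underlying computation is sound.
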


\begin{proof}
Set
\begin{align*} 
\k = \k' 2^{\ell+2}\overline{2^{\ell+2}}+ \k'' dq'\overline{dq'},
\quad
a= a'2^{\ell}\overline{2^{\ell}} + a''q'\overline{q'},
\end{align*}
where $\k'\bmod{dq'}$, $\k'' \bmod{2^{\ell+2}}$, $a'\bmod{q'}$, and $a''\bmod{2^{\ell}}$. 
The conditions on $\k$ then translate into 
$
\k''\equiv \a \bmod{4}$, $Q_1(\k')\equiv 0\bmod{d}$ and
$Q_2(\k')\equiv 0\bmod{d}.$
Furthermore, we have
$$
e\left(\frac{4aQ_2(\k)+\m.\k}{4dq}\right)=
e\left(\frac{(4a'Q_2(\k')+\m.\k')\overline{2^{\ell+2}}}{dq'}\right)
e\left(\frac{(4a''Q_2(\k'')+\m.\k'')\overline{dq'}}{2^{\ell+2}}\right).
$$
The sum over $a'$ and $\k'$ gives $S_{d,q'}(\m)$ 
after a change of variables. A similar change of variables in 
$a''$ and $k''$ gives $S^{\pm}_{1,2^{\ell}}(\m)$, where the sign is given by $\chi(dq')$. 
\end{proof}

In a similar spirit we can prove the following multiplicativity property for the sum \eqref{eq:S'}. 

\begin{lemma}\label{lem:mult2}
For $d=d_1d_2$ and $q=q_1q_2$, with $(d_1q_1,d_2q_2)=1$, we have
\begin{align*}
S_{d,q}(\m)=S_{d_1,q_1}(\m)S_{d_2,q_2}(\m).
\end{align*}
\end{lemma}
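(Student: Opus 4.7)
The plan is to establish the multiplicativity of $S_{d,q}(\m)$ by a standard CRT-decomposition argument, modeled on the proof of Lemma \ref{lem:mult1}. Setting $D_i=d_iq_i$ so that $(D_1,D_2)=1$, I will parametrize the summation variables via
$$
\k = D_2\overline{D_2}\,\k_1 + D_1\overline{D_1}\,\k_2,\qquad
a  = q_2\overline{q_2}\,a_1 + q_1\overline{q_1}\,a_2,
$$
where $\overline{D_2}$ denotes the inverse of $D_2$ modulo $D_1$, and similarly for the other inverses. As $\k_1$ and $\k_2$ range over $\ZZ/D_1\ZZ$ and $\ZZ/D_2\ZZ$ respectively, $\k$ ranges over $\ZZ/D_1D_2\ZZ$ bijectively, and likewise for $(a_1,a_2)\leftrightarrow a$.

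Next I would verify that the conditions of summation split as expected. The coprimality $(a,q)=1$ is equivalent to $(a_1,q_1)=1$ and $(a_2,q_2)=1$. Since $\k\equiv \k_i\bmod{D_i}$ and $d_i\mid D_i$, the divisibilities $Q_j(\k)\equiv 0\bmod{d}$ (for $j=1,2$) are equivalent to $Q_j(\k_i)\equiv 0\bmod{d_i}$ for $i,j\in\{1,2\}$.

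Now for the exponential, CRT gives $\frac{1}{D_1D_2}\equiv \frac{\overline{D_2}}{D_1}+\frac{\overline{D_1}}{D_2}\bmod 1$, so
$$
e_{D_1D_2}\bigl(aQ_2(\k)+\m.\k\bigr)
= e_{D_1}\bigl(\overline{D_2}(aQ_2(\k)+\m.\k)\bigr)\cdot
e_{D_2}\bigl(\overline{D_1}(aQ_2(\k)+\m.\k)\bigr).
$$
Working modulo $D_1=d_1q_1$, I would note that $\m.\k\equiv \m.\k_1$ directly, while $Q_2(\k)\equiv Q_2(\k_1)\bmod{D_1}$ is divisible by $d_1$, so writing $Q_2(\k_1)=d_1s$, one has $aQ_2(\k)\equiv d_1(a_1 s)= a_1 Q_2(\k_1)\bmod{D_1}$ (since only $a\bmod{q_1}=a_1$ matters). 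Hence modulo $D_1$ the exponent collapses to $\overline{D_2}(a_1Q_2(\k_1)+\m.\k_1)$, and symmetrically modulo $D_2$.

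The final step is to absorb the $\overline{D_2}$ and $\overline{D_1}$ factors via the bijective substitutions $\k_1\mapsto D_2\k_1$ (on $\ZZ/D_1\ZZ$) and $a_1\mapsto \overline{D_2}\,a_1$ (on $(\ZZ/q_1\ZZ)^*$), which preserve the constraints $Q_j(\k_1)\equiv 0\bmod{d_1}$ because $D_2$ is a unit modulo $d_1$. Under these substitutions, $\overline{D_2}(a_1\,D_2^2\,Q_2(\k_1)+D_2\,\m.\k_1)$ simplifies to $a_1Q_2(\k_1)+\m.\k_1$, turning the first factor into exactly $S_{d_1,q_1}(\m)$. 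Applying the analogous substitution on the $D_2$ side produces $S_{d_2,q_2}(\m)$, completing the proof. The only place requiring a moment of care is the bookkeeping in the reduction $aQ_2(\k)\equiv a_1Q_2(\k_1)\bmod{D_1}$ which relies crucially on the divisibility $d_1\mid Q_2(\k_1)$; no genuine obstacle arises.
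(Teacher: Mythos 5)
Your proof is correct and follows essentially the same CRT-decomposition strategy that the paper uses for Lemma \ref{lem:mult1} and then invokes implicitly for Lemma \ref{lem:mult2} by saying the result follows "in a similar spirit." In particular, the crucial bookkeeping step you flag — that $aQ_2(\k)\bmod{D_1}$ depends only on $a\bmod{q_1}$ precisely because the summation constraint forces $d_1\mid Q_2(\k_1)$ — is handled correctly, and the final change of variables $\k_1\mapsto D_2\k_1$, $a_1\mapsto\overline{D_2}a_1$ mirrors the paper's "after a change of variables" step in the proof of Lemma \ref{lem:mult1}.
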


This result reduces the problem of estimating $S_{d,q}(\m)$ into three distinct cases. Accordingly, for $d,q\in \NN$ we define  the sums 
$$
\mathcal
Q_{q}(\m)=S_{1,q}(\m),\quad
\mathcal D_{d}(\m)=S_{d,1}(\m),\quad 
 \mathcal
M_{d,q}(\m)=S_{d,q}(\m),
$$ 
the latter sum only being of interest when $d$ and $q$ exceed $1$ and
are constructed from the same set of primes. 
The analysis of these sums will be the focus of \S \ref{sec:qsum}, \S \ref{sec:dsum1} and \S \ref{mcs}, respectively.
For the moment we content ourselves with recording the crude upper bound 
\begin{equation}
\label{eq:Sell-upper}
S^{\pm}_{1,2^{\ell}} (\m)\ll 2^{\ell(\frac{n}{2}+1)},
\end{equation}
for \eqref{eq:Sell}, whose truth will be established in the following section.

\medskip

We close this section by presenting some facts  concerning the exponential integral 
$I_{d,q}(\m)$
which appears in Lemma \ref{psum},  recalling the definition  \eqref{eq:W_d} of 
$W_d\left(\mathbf y\right)$.
The properties of $h$ recorded in Lemma \ref{dfi} ensure that $q\ll Q$
when $I_{d,q}(\m)$ is non-zero. Likewise the properties of $W_{d}$ imply that $d\ll B$ under the same hypothesis. 
The underlying weight function $W$ has bounded
derivatives  
$$
\frac{\partial^{i_1+\cdots+i_n}}{\partial y_1^{i_1}\cdots\partial
  y_n^{i_n}}W(\y)\ll_{i_1,\dots,i_n}1, 
$$
and the function $V_T$ satisfies $t^jV_T^{(j)}(t)\ll_j 1$. It therefore follows that
$$
\frac{\partial^{i_1+\cdots+i_n}}{\partial y_1^{i_1}\cdots\partial
  y_n^{i_n}}W_d(\y)\ll_{i_1,\dots,i_n}1,
$$
since $Q_{1}(\y)$ has order of magnitude $1$ for every $\y\in \supp(W)$.

In the notation of \cite[\S 7]{H} we have 
\begin{equation}\label{eq:I*}
I_{d,q}(\m)=I_r^{*}(\mathbf v)=\int_{\mathbb R^n}
h\left(r,G(\y)\right)\omega\left(\y\right)
e_{r}(-\mathbf v.\y)\d\y,
\end{equation}
where 
$$
r=\frac{q}{Q}, \quad \mathbf v= \frac{B\m}{4dQ}, \quad G(\y)=\frac{B^2Q_2(\y)}{dQ^2}=
Q_2(\y), \quad 
\omega (\y)= W_d(\y). 
$$
We have
$$
\frac{\partial^{i_1+\cdots+i_n}}{\partial y_1^{i_1}\cdots\partial y_n^{i_n}}G(\y)\ll_{i_1,\dots,i_n}1,
\quad
\frac{\partial^{i_1+\cdots+i_n}}{\partial y_1^{i_1}\cdots\partial
  y_n^{i_n}}\omega(\y)\ll_{i_1,\dots,i_n}1. 
$$
Using these bounds and integration by parts, as in \cite[\S 7]{H}, 
we obtain the following bound. 

\begin{lemma}
\label{ubI_q}
For $\m \neq \mathbf 0$ and any $N\geq 0$, we have
$$
I_{d,q}(\m) \ll_{N} \frac{Q}{q}\left(\frac{dQ}{B|\m|}\right)^N.
$$
\end{lemma}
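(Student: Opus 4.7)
The plan is to prove Lemma \ref{ubI_q} by repeated integration by parts in the oscillatory integral \eqref{eq:I*}, exploiting the smoothness of the amplitude $\psi(\y)=h(r,G(\y))\omega(\y)$ against the rapid oscillation of $e_r(-\mathbf{v}\cdot \y)$. Since $\m\neq \mathbf 0$ we first choose $j\in\{1,\ldots,n\}$ so that $|v_j|\geq n^{-1/2}|\mathbf v|$, whence
$$
|v_j|\gg \frac{B|\m|}{dQ}.
$$
Rewriting $e_r(-\mathbf v\cdot\y)=e(-\mathbf v\cdot\y/r)$, the effective frequency in the $y_j$ direction is $v_j/r$, so a single integration by parts in $y_j$ introduces the factor $r/(2\pi i v_j)$ and transfers a derivative onto $\psi$; the absence of boundary terms is guaranteed by the compact support of $\omega=W_d$.

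Iterating $N$ times yields
$$
I_{d,q}(\m)=\left(\frac{r}{2\pi i v_j}\right)^{\! N}\int_{\RR^n}\partial_{y_j}^N\bigl(h(r,G(\y))\omega(\y)\bigr)\,e(-\mathbf v\cdot\y/r)\,\d\y.
$$
One now controls the amplitude using the fact (Lemma \ref{dfi}) that $|h(r,y)|\ll r^{-1}$, together with the derivative estimates for $h$ established in \cite[\S 7]{H}, which show that each $\y$-derivative of $h(r,G(\y))$ remains $O(r^{-1})$ once we know that $G(\y)=Q_2(\y)=O(1)$ and that the $\y$-derivatives of $G$ and of $\omega$ are all bounded uniformly (as recorded just before the statement of the lemma). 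Applying the Leibniz rule, $\partial_{y_j}^N\psi(\y)\ll_N r^{-1}$ on the bounded support of $\omega$, so the integral is $\ll_N r^{-1}$ and
$$
I_{d,q}(\m)\ll_N r^{-1}\left(\frac{r}{|v_j|}\right)^{\! N}\ll \frac{Q}{q}\left(\frac{dq}{B|\m|}\right)^{\! N}.
$$
To finish, observe that $h(r,y)$ vanishes unless $r\leq\max\{1,2|y|\}$; since $|G(\y)|=|Q_2(\y)|=O(1)$ on the support of $W$, it follows that $r\ll 1$, i.e.\ $q\ll Q$, so that $dq/(B|\m|)\ll dQ/(B|\m|)$, which gives the claimed bound.

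The only point requiring genuine input beyond routine integration by parts is the control of $\partial_{\y}^N h(r,G(\y))$; this is the main technical obstacle, but it is entirely handled by the derivative estimates for the Duke--Friedlander--Iwaniec--Heath-Brown weight $h$ recorded in \cite[\S 7]{H}, so no new work is needed here.
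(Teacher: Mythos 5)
Your approach is the same as the paper's: integrate by parts in the oscillatory integral $I_r^*(\mathbf v)$ following Heath-Brown's \S 7 of \cite{H}. However, the key amplitude estimate you assert, namely $\partial_{y_j}^N\bigl(h(r,G(\y))\omega(\y)\bigr)\ll_N r^{-1}$, is not correct, and that is a genuine gap. The bound $h(x,y)\ll x^{-1}$ does not persist when one takes $y$-derivatives: the function $h(x,\cdot)$ has a feature of width $\asymp x$ near $y\approx x$, and Heath-Brown's derivative estimates show that $\partial_y^j h(x,y)$ is of order $x^{-1-j}$ in this critical range. In the present application $x=r=q/Q$ and $y=Q_2(\y)$, and since $W$ is supported in a region where $Q_2$ certainly vanishes (the counting problem is precisely about the zero locus of $Q_2$!), the quantity $Q_2(\y)$ does take values $\asymp r$ on $\supp(W_d)$. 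So your assertion that ``$G(\y)=Q_2(\y)=O(1)$'' controls the $\y$-derivatives of $h(r,G(\y))$ to size $r^{-1}$ fails; one only has $\partial_{y_j}^N\bigl(h(r,G(\y))\omega(\y)\bigr)\ll_N r^{-N-1}$.

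Fortunately the corrected bound still proves the lemma, and in fact gives precisely the stated bound and no more. After $N$ integrations by parts one obtains
$$
I_{d,q}(\m)\ll_N \left(\frac{r}{|v_j|}\right)^{\!N} r^{-N-1}=\frac{1}{r\,|v_j|^{N}}\ll \frac{Q}{q}\left(\frac{dQ}{B|\m|}\right)^{\!N},
$$
using $|v_j|\gg B|\m|/(dQ)$. Notice that the $r^{N}$ you lose in the derivative estimate is exactly the discrepancy between the lemma's $\bigl(dQ/(B|\m|)\bigr)^N$ and the stronger $\bigl(dq/(B|\m|)\bigr)^N$ that your intermediate claim would have produced; the fact that you subsequently needed to invoke $q\ll Q$ to recover the lemma's form is a symptom of the error.
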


As a consequence we get that $\m$ with $|\m|>dQB^{-1+\varepsilon}$ will make a negligible 
contribution in our analysis of $S_{T,\ma{a}}^\sharp(B)$.
For $\m$ with $0<|\m|\leq dQB^{-1+\varepsilon}$ we need a more refined bound.

\begin{lemma}
\label{ubI_q2}
For $0<|\m| \leq dQB^{-1+\varepsilon}=\sqrt{d}B^{\ve}$ and $q\ll Q=B/\sqrt{d}$, we have
\begin{align*}
\frac{\partial^{i+j}}{\partial d^i\partial q^j}I_{d,q}(\m) &\ll d^{-i}q^{-j}\left|\frac{B\m}{dq}\right|^{1-\frac{n}{2}} B^{\varepsilon},
\end{align*}
for any $i,j\in\{0,1\}$.
\end{lemma}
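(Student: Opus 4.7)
The plan is to work from the representation \eqref{eq:I*}, which writes $I_{d,q}(\m) = I_r^*(\mathbf v)$ with $r = q/Q = q\sqrt{d}/B$ and $\mathbf v = B\m/(4dQ) = \m/(4\sqrt{d})$. Since $e_r(-\mathbf v\cdot\y) = e(-(\mathbf v/r)\cdot\y)$, the effective oscillation frequency in the exponential is $|\mathbf v|/r = B|\m|/(4dq)$, which is precisely the quantity $|B\m/(dq)|$ appearing in the claimed bound. Under the hypothesis $|\m|\le \sqrt{d}B^\ve$ one has $|\mathbf v|\ll B^\ve$ and $r\ll B^\ve$, so any cancellation must come from oscillation of the exponential, and the target bound is meaningful precisely when this frequency is large.

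First I would establish the case $i=j=0$, namely $I_{d,q}(\m)\ll |B\m/(dq)|^{1-n/2}B^\ve$, by an oscillatory integral analysis in the spirit of \cite[Lemma~22]{H}. The phase is linear and admits no stationary point, but the amplitude $h(r,Q_2(\y))W_d(\y)$ concentrates near the quadric $Q_2(\y)=0$: its $\y$-derivatives grow like $r^{-k-1}$ in the direction normal to the level sets of $Q_2$ (reflecting the pointwise estimates on $h$ developed in \cite[\S 7]{H}), while tangential derivatives remain of order $1$. Passing to local coordinates on $\supp W$ adapted to these level sets---which is possible since $\M_2$ is non-singular---and integrating by parts $n/2-1$ times along the $n-1$ tangential directions extracts a factor $|\mathbf v/r|^{-(n/2-1)}$ from the phase, while the remaining one-dimensional normal integration is controlled by the size of $h$, yielding the stated bound.

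For the derivative bounds ($i$ or $j$ equal to $1$), I would differentiate \eqref{eq:I*} under the integral sign and track each contribution. The parameters enter through the phase $e_{4dq}(-B\m\cdot\y)$, where $q\partial_q$ and $d\partial_d$ bring down factors of order $(\mathbf v/r)\cdot\y$; through $h(r,\cdot)$ via $r=q\sqrt{d}/B$, where $q\partial_q h$ and $d\partial_d h$ are bounded multiples of $rh_x(r,\cdot)$; and through the weight $W_d$ via $V_T$, with derivatives controlled by the hypothesis $t^jV_T^{(j)}(t)\ll 1$. Each resulting term has the same structural form as \eqref{eq:I*}, so the analysis of the previous step applies, yielding the desired bound with the additional factor $d^{-i}q^{-j}$; the spurious factors $(\mathbf v/r)\cdot\y$ coming from differentiating the phase are harmlessly absorbed by a single further integration by parts along $\mathbf v$, using $|\y|\ll 1$ on $\supp W$. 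The main obstacle is the careful bookkeeping in the previous step, specifically ensuring that the interplay between the amplitude scale $r$ and the phase frequency $|\mathbf v/r|$ produces exactly the exponent $1-n/2$ and that no loss accrues from the $r^{-k-1}$ growth of the normal derivatives of $h$; this ultimately rests on the non-singularity of the Hessian $\M_2$.
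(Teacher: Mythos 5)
There is a genuine gap in your treatment of the $i=1$ cases, and it is precisely the obstacle you flag at the end without resolving. When you differentiate $e_{4dq}(-B\m\cdot\y)$ with respect to $d$ you bring down a factor of order $\tfrac{1}{d}\bigl|\tfrac{B\m}{dq}\bigr|$, so the raw contribution is larger than the target by the full factor $|B\m/(dq)|=|\mathbf v|/r$, which in the allowed range can be as big as $B/\sqrt{d}$. You claim this is recovered by "a single further integration by parts along $\mathbf v$," but that step is exactly where the $r^{-1}$ blow-up you identify bites: the $y$-derivative of $h(x,y)$ grows like an extra factor $x^{-1}$, so an integration by parts in a direction with a component normal to the quadric $Q_2=0$ trades the phase gain $r/|\mathbf v|$ against a loss $r^{-1}$ from $h$, leaving only $1/|\mathbf v|$. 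Since $|\mathbf v|=|\m|/(4\sqrt d)$ can be well below $1$, this is not a gain at all. Restricting to tangential directions avoids the blow-up, but then the gain is $r/|\mathbf v\cdot\tau|$, which degenerates when $\mathbf v$ is nearly normal; you would need a genuine two-regime argument here, which you don't supply. The same issue recurs, squared, in the $(i,j)=(1,1)$ case, which your sketch doesn't really address.

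The paper avoids this difficulty altogether for $i=1$ by first substituting $\y\mapsto d\y$, which turns the phase into $e_{4q}(-B\m\cdot\y)$ and so removes $d$ from the exponential entirely. After that, $\partial_d$ only hits $d^n$, the arguments of $h$ (through $r=q\sqrt d/B$ and $d^2Q_2(\y)$), and $W_d(d\y)$; each resulting term is, after changing variables back, an integral of exactly the same shape as $I_{d,q}(\m)$ with $h$ replaced by $rh^{(1)}$, $h^{(2)}\!\cdot Q_2$, or $h$ together with a modified admissible weight. One then simply invokes the $i=0$ bound for those. No extra integration by parts is needed because no extra frequency factor is produced. I would recommend adopting this change of variables; your direct differentiation cannot be salvaged by the single integration by parts you describe. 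Finally, note that the paper does not re-derive the $i=0$ case at all: it cites Heath-Brown directly for both $j=0$ and $j=1$. Your sketch for $i=j=0$ is in the right spirit, but it runs into the same tangential/normal delicacy (the gain $r/|\mathbf v\cdot\tau|$ can fail), so I would also just cite Heath-Brown there rather than attempt a re-derivation.
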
 
\begin{proof}
When $i=0$ this result follows from a closer study of the behaviour of the function $h(x,y)$, 
and is due to
Heath-Brown \cite[\S\S 4--8]{H}.  
Let us suppose that $i=1$. After a change of variables we have
$$
I_{d,q}(\m)=d^n\int_{\mathbb R^n}
h\left(\frac{q\sqrt{d}}{B},d^2Q_2(\y)\right)W_d\left(d\y\right)
e_{4q}(-B\m.\y)\d\mathbf y.
$$
We proceed to take the derivative with respect to $d$. The  right hand side is seen to be 
\begin{align*}
\frac{n}{d}I_{d,q}(\m)+d^n\int_{\mathbb R^n}
g_d(\y)  e_{4q}(-B\m.\y)\d\mathbf y,
\end{align*}
where if $h^{(1)}(x,y)=\frac{\partial}{\partial x}h(x,y)$ and $h^{(2)}(x,y)=\frac{\partial}{\partial y}h(x,y)$, then
\begin{align*}
g_d(\y)=~&
\frac{q}{2B\sqrt{d}}h^{(1)}\left(\frac{q\sqrt{d}}{B},d^2Q_2(\y)\right)W_d\left(d\y\right)\\
&
+2dQ_2(\y)h^{(2)}\left(\frac{q\sqrt{d}}{B},d^2Q_2(\y)\right)W_d\left(d\y\right)
+h\left(\frac{q\sqrt{d}}{B},d^2Q_2(\y)\right)\frac{\partial}{\partial d} W_d\left(d\y\right).
\end{align*}
Let $W^{(1)}(\y)=\y.\nabla W(\y)$. 
One finds that 
$$
\frac{\partial}{\partial d} W_d\left(d\y\right)=
 \frac{1}{d}W^{(1)}\left(d\mathbf y\right)V_T(d)+W\left(d\mathbf y\right)V_T'(d),
 $$
if $T\leq B$, and 
$$
\frac{\partial}{\partial d} W_d\left(d\y\right)
=\frac{1}{d}W^{(1)}\left(d\mathbf y\right)V_T\left(B^2dQ_1(\y)\right)+W\left(d\mathbf y\right)V_T'\left(B^2dQ_1(\y)\right)B^2Q_1(\y),
$$
otherwise.
Hence
\begin{align*}
\frac{\partial}{\partial d} W_d\left(d\y\right)=\frac{1}{d}W_{1,d}\left(d\y\right),
\end{align*}
where the new function $W_{1,d}$ has the same analytic behaviour as $W_d$. Another change of variables now yields
\begin{align*}
\frac{\partial}{\partial d}I_{d,q}(\m)
=~&\frac{n}{d}I_{d,q}(\m)+\frac{1}{2d}\int_{\mathbb R^n}\frac{q\sqrt{d}}{B}h^{(1)}\left(\frac{q\sqrt{d}}{B},Q_2(\y)\right)W_d\left(\y\right)e_{4dq}(-B\m.\y)\d\mathbf y\\
&+\frac{2}{d}\int_{\mathbb R^n}h^{(2)}\left(\frac{q\sqrt{d}}{B},Q_2(\y)\right)W_{2,d}\left(\y\right)e_{4dq}(-B\m.\y)\d\mathbf y\\
&+\frac{1}{d}\int_{\mathbb R^n}h\left(\frac{q\sqrt{d}}{B},Q_2(\y)\right)W_{1,d}\left(\y\right)e_{4dq}(-B\m.\y)\d\mathbf y,
\end{align*}
where $W_{2,d}(\y)=W_d(\y)Q_2(\y)$. The last three integrals can be compared with $I_{d,q}(\m)$, and the lemma now follows using the bounds in the statement of the lemma for $i=0$.
\end{proof}

\section{Analysis of $\cQ_q(\m)$}\label{sec:qsum}

The aim of this section is to collect together everything we need to
know about the sums
$$
\cQ_q(\m)=\sideset{}{^{*}}\sum_{\substack{a\bmod{q}}} \sum_{\k\bmod{q}} e_q(aQ_2(\k)+\m.\k),
$$
for given $\m \in \ZZ^n$.
This sum appears very naturally when the circle method is
employed to analyse quadratic forms. Let $\M$ be
the underlying symmetric $n\times n$ integer matrix for a quadratic form $Q$,
so that $Q(\k)=\k^T\M\k$. 
We begin with an easy upper bound for the inner sum in $\cQ_{q}(\m)$ when $q$ is a prime power.

\begin{lemma}
\label{lem:gauss-sum-bound}
For any quadratic form $Q(\x)=\x^T\M\x$, we have
$$
\left|\sum_{\mathbf k \bmod{p^r}}e_{p^r}\left(Q(\k)+\m.\k\right)\right|\leq p^{\frac{nr}{2}}
\sqrt{K_{p^r}(2\M;\0)},
$$
in the notation of \eqref{eq:2.1}.
\end{lemma}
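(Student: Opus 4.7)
The plan is a standard squaring-trick argument: bound the modulus squared of the sum by a character sum that collapses to counting congruence solutions controlled by the matrix $2\M$.

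First I would denote $S = \sum_{\k\bmod{p^r}} e_{p^r}(Q(\k)+\m.\k)$ and compute
\[
|S|^{2} = \sum_{\k,\k'\bmod{p^r}} e_{p^r}\bigl(Q(\k)-Q(\k')+\m.(\k-\k')\bigr).
\]
The goal of the squaring is to linearise the quadratic part. Substitute $\k=\k'+\h$ with $\h\bmod{p^r}$; since $Q$ has symmetric matrix $\M$ we have the polarisation identity
\[
Q(\k'+\h)-Q(\k') = Q(\h) + \h^{T}(2\M)\k'.
\]
Hence
\[
|S|^{2} = \sum_{\h\bmod{p^r}} e_{p^r}\bigl(Q(\h)+\m.\h\bigr) \sum_{\k'\bmod{p^r}} e_{p^r}\bigl(\h^{T}(2\M)\k'\bigr).
\]

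Next I would carry out the inner sum over $\k'$. Using $\h^{T}(2\M)\k' = ((2\M)\h)^{T}\k'$ (by symmetry of $\M$) and orthogonality of additive characters mod $p^{r}$, the inner sum equals $p^{nr}$ when $(2\M)\h \equiv \mathbf{0}\bmod{p^{r}}$ and vanishes otherwise. Therefore
\[
|S|^{2} = p^{nr} \sum_{\substack{\h\bmod{p^r}\\ (2\M)\h\equiv \mathbf 0\bmod{p^r}}} e_{p^r}\bigl(Q(\h)+\m.\h\bigr).
\]

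Finally, applying the trivial bound $|e_{p^r}(\cdot)|\leq 1$ to the remaining sum, the number of surviving $\h$ is exactly $K_{p^r}(2\M;\mathbf 0)$ in the notation of \eqref{eq:2.1}, so
\[
|S|^{2} \leq p^{nr}\,K_{p^r}(2\M;\mathbf 0),
\]
and taking square roots gives the claim. No step here is a genuine obstacle; the only care required is to make sure the symmetry $\M^{T}=\M$ is invoked so that the linear form in $\k'$ arising from the cross term is indeed $(2\M)\h$, matching the matrix appearing in the statement.
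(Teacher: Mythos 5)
Your proof is correct and follows essentially the same route as the paper: square the sum, substitute to polarise the quadratic, apply orthogonality to make the inner sum vanish unless $2\M\h\equiv\0\bmod{p^r}$, and then bound the surviving sum trivially by $K_{p^r}(2\M;\0)$. The only cosmetic difference is that you correctly present the opening step as an equality $|S|^2 = \sum_{\k,\k'} e_{p^r}(\cdots)$, whereas the paper attributes it (somewhat loosely) to Cauchy's inequality; the substance is identical.
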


\begin{proof}
Cauchy's inequality implies that the square of the left hand side is not greater than 
$$
\sum_{\x,\y\bmod{p^r}}
e_{p^r}\big((Q(\x)-Q(\y))+\m.(\x-\y)\big).
$$
Substituting $\x=\y+\z$ we see that the summand is equal to
$
e_{p^r}(\m.\z)e_{p^r}(Q(\z)+2\y^T \M\z).
$
The sum over $\y$ vanishes unless $p^{r}\mid 2\M\z$, in which case it
is given by $p^{nr}e_{p^r}(Q(\z))$. 
The result now follows by executing the sum over
$\z$ trivially.
\end{proof}

We apply Lemma \ref{lem:gauss-sum-bound} to estimate $\mathcal{Q}_{q}(\m)$.   Since $Q_2$ is non-singular 
it follows from Lemma~\ref{lem:smith} that there is an absolute constant $c\geq 1$ such that 
$
K_{p^r}(2\M_2;\ma{0})\leq c,
$
for any prime power $p^r$. Moreover one can take $c=1$ when $p\nmid 2\det \M_2$.
On summing trivially over $a$ one deduces that
$
|\mathcal Q_{p^r}(\m)|\leq  \sqrt{c} p^{\left(\frac{n}{2}+1\right)r}, 
$
for any prime power $p^r$.  Applying 
Lemma~\ref{lem:mult2} therefore yields 
\begin{equation}
\label{cor:Q_p^r}
\mathcal Q_{q}(\m)\ll q^{\frac{n}{2}+1}.
\end{equation}
Likewise  \eqref{eq:Sell-upper} is 
an easy consequence of Lemma \ref{lem:smith} and  Lemma \ref{lem:gauss-sum-bound} when $p=2$.

Using quadratic Gauss sums,  
it is possible to prove explicit formulae for $\cQ_{p^{r}}(\m)$
when the prime $p$ is large
enough. The oscillation in the sign of these sums will give
cancellation in the sum over $q$ in Lemma \ref{psum} which will be
crucial for handling  $n= 7$.  
Let $Q(\x)$ be a quadratic form with associated matrix $\M$. We
write $Q^*(\x)$ for the adjoint quadratic form with underlying
matrix $(\det \M)\M^{-1}$.    For any odd prime $p$ let
$$
\varepsilon(p)=
\begin{cases}
1, &\mbox{if $p\equiv 1 \bmod{4}$,}\\
i, &\mbox{if $p\equiv 3 \bmod{4}$,}
\end{cases} 
$$
and let $\chi_{p}(\cdot)$ denote the Legendre symbol
$(\frac{\cdot}{p})$. 
We may now record the following formula.

\begin{lemma}
\label{gs}
Let $p$ be a prime with $p\nmid 2\det \M$. Then we have
\begin{align*}
\sum_{\k \bmod{p^r}}e_{p^r}(Q(\k)+\m. \k)=
\begin{cases} 
p^{\frac{nr}{2}}e_{p^r}(-\overline{4\det \M}Q^*(\m)), &
\mbox{if $r$ is even},\\
p^{\frac{nr}{2}}\chi_p(\det
\M)\varepsilon(p)^ne_{p^r}(-\overline{4\det \M}Q^*(\m)),
&\mbox{if $r$
  is odd}. 
\end{cases}
\end{align*}
\end{lemma}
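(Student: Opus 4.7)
The plan is to reduce the $n$-dimensional sum to a product of one-variable quadratic Gauss sums via two standard moves: completing the square to kill the linear term $\m.\k$, and diagonalising $Q$ over $\ZZ/p^{r}\ZZ$. The whole argument runs smoothly because the hypothesis $p\nmid 2\det \M$ makes both $2$ and $\det \M$ units modulo $p^{r}$.

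First I would complete the square. Since $p\nmid 2\det \M$, the matrix $\M$ is invertible modulo $p^{r}$, and writing $\tfrac12$ for $\overline{2}$ in $\ZZ/p^{r}\ZZ$ we have the identity
\[
Q(\k)+\m.\k \equiv \bigl(\k+\tfrac12\M^{-1}\m\bigr)^{T}\M\bigl(\k+\tfrac12\M^{-1}\m\bigr)-\tfrac14\m^{T}\M^{-1}\m \bmod{p^{r}}.
\]
Using $\M^{-1}\equiv \overline{\det\M}\,\mathrm{adj}(\M)\bmod{p^{r}}$ and the definition $Q^{*}(\m)=\m^{T}\mathrm{adj}(\M)\m$, the constant term becomes $\overline{4\det\M}\,Q^{*}(\m)$. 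The substitution $\k\mapsto \k-\tfrac12\M^{-1}\m$ is a bijection on $\ZZ/p^{r}\ZZ$-cosets, so
\[
\sum_{\k\bmod p^{r}}e_{p^{r}}\!\bigl(Q(\k)+\m.\k\bigr)=e_{p^{r}}\!\bigl(-\overline{4\det\M}\,Q^{*}(\m)\bigr)\sum_{\k\bmod p^{r}}e_{p^{r}}\!\bigl(Q(\k)\bigr).
\]

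Next I would diagonalise $Q$. Since $p$ is odd and $p\nmid\det \M$, an inductive Gram--Schmidt over $\ZZ_{p}$ produces $P\in GL_{n}(\ZZ_{p})$ with $P^{T}\M P=D=\diag(d_{1},\ldots,d_{n})$ and $\det D=(\det P)^{2}\det \M$; reducing modulo $p^{r}$ gives a bijective change of variables on $(\ZZ/p^{r}\ZZ)^{n}$ with $p\nmid d_{i}$ for every $i$. Thus
\[
\sum_{\k\bmod p^{r}}e_{p^{r}}\!\bigl(Q(\k)\bigr)=\prod_{i=1}^{n}\sum_{k\bmod p^{r}}e_{p^{r}}\!\bigl(d_{i}k^{2}\bigr),
\]
and each factor is a classical one-dimensional quadratic Gauss sum. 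The standard evaluation for $p$ odd and $p\nmid a$ gives
\[
\sum_{k\bmod p^{r}}e_{p^{r}}\!\bigl(ak^{2}\bigr)=\begin{cases}p^{r/2},&r\text{ even},\\ p^{r/2}\chi_{p}(a)\varepsilon(p),&r\text{ odd},\end{cases}
\]
which follows, in the odd case, from the Gauss sum $\sum_{k}e_{p}(ak^{2})=\chi_{p}(a)\varepsilon(p)\sqrt{p}$ combined with the reduction $\sum_{k\bmod p^{r}}e_{p^{r}}(ak^{2})=p\sum_{k\bmod p^{r-2}}e_{p^{r-2}}(ak^{2})$ for $r\geq 2$ (proved by writing $k=k_{0}+p^{r-1}k_{1}$).

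Finally I would assemble the pieces. For $r$ even the product is $p^{nr/2}$; for $r$ odd it is $p^{nr/2}\,\chi_{p}(d_{1}\cdots d_{n})\varepsilon(p)^{n}=p^{nr/2}\,\chi_{p}(\det D)\varepsilon(p)^{n}$, and since $(\det P)^{2}$ is a square we have $\chi_{p}(\det D)=\chi_{p}(\det \M)$. Multiplying by the phase from the completion of the square yields the two stated formulae. There is no real obstacle: the diagonalisation step is the only place where one has to be careful, but it is routine once we know that $p\nmid 2\det \M$ allows everything to be carried out modulo $p^{r}$.
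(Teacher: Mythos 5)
Your proof is correct and follows essentially the same route as the paper's: diagonalise $Q$ modulo $p^{r}$ (possible since $p\nmid 2\det\M$), reduce to one--variable quadratic Gauss sums, and complete the square to account for the linear term, with the standard evaluation of $\sum_{k}e_{p^{r}}(ak^{2})$ finishing the job. The only cosmetic difference is the order of operations: the paper diagonalises first and then completes the square one variable at a time, whereas you complete the square in all $n$ variables at once (observing $\tfrac14\m^{T}\M^{-1}\m\equiv\overline{4\det\M}\,Q^{*}(\m)$) before diagonalising, which leads to the identical result.
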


\begin{proof} 
Since $p$ is odd there exists a $n\times n$ matrix $\ma U$ with
integer entries and $p\nmid \det \ma U$ such that $\ma{U}^{T}\M\ma{U}$
is diagonal modulo $p^{r}$. Hence in proving the lemma we may restrict
ourselves to  diagonal forms  
$
Q(\x)=\al_1x_1^2+\cdots+\al_nx_n^2,
$
with $\M=\diag(\al_1,\dots,\al_n)$. In this case we have
$$
Q^*(\x)=\det \M\left(\frac{x_1^2}{\al_1}+\cdots+\frac{x_n^2}{\al_n}\right),
$$
where $\det \M=\al_1\cdots \al_n$. 

Let $S$ denote the sum appearing on the left hand side in the statement of the lemma.
Then
\begin{align*}
S=\prod_{i=1}^n\Bigl\{\sum_{k \bmod{p^r}}e_{p^r}(\al_i k^2+m_ik)\Bigr\}.
\end{align*}
Since $p\nmid 2\al_i$, we can complete the square. This yields
\begin{align*}
\sum_{k \bmod{p^r}}e_{p^r}(\al_i k^2+m_ik)=e_{p^r}(-\overline{4\al_i}m_i^2)
\sum_{k \bmod{p^r}}e_{p^r}(\al_i k^2).
\end{align*}
The last sum is the quadratic Gauss sum, which satisfies
\begin{align*}
\sum_{k (\text{mod}\;p^r)}e_{p^r}(\al_i k^2)=
\begin{cases}p^{\frac{r}{2}}, &\mbox{if $r$ is even,}\\
\chi_p(\al_i)\varepsilon(p)p^{\frac{r}{2}}, &\mbox{if $r$ is odd.}
\end{cases}
\end{align*}
The lemma follows on substituting this into the above expression for
$S$.\end{proof}

Lemma \ref{gs} directly yields an 
explicit evaluation of the  sum $\mathcal Q_{p^r}(\m)$ when
the prime $p$ is sufficiently large. To state the outcome of this let 
$$
c_{p^r}(a)=\sideset{}{^{*}}\sum_{x\bmod{p^r}}e_{p^r}\left(ax\right)
=\sum_{d\mid (p^r,a)} d\mu\left(\frac{p^r}{d}\right)
$$
be the Ramanujan sum and let
$$
g_{p^r}(a)=\sum_{x\bmod{p^r}}\chi_p(x)e_{p^r}\left(ax\right)
$$
be the Gauss sum. For the former we will make frequent use of the
fact that 
$c_{p^{r}}(ab)=c_{p^{r}}(a)$ for any $b$ coprime to $p$, and
$c_{p^{r}}(a_{1})=c_{p^{r}}(a_{2})$ whenever $a_{1}\equiv
a_{2}\bmod{p^{r}}$. Moreover, we have the obvious inequality 
$|c_{p^r}(a)|\leq (p^r,a)$.

It follows from Lemma \ref{gs} that
$$
\mathcal Q_{p^r}(\m)=p^{\frac{nr}{2}}\sideset{}{^{*}}\sum_{a\bmod{p^r}}\begin{cases} 
e_{p^r}(-\overline{4a\det \M_2}
Q^*_2(\m)),&\mbox{if $r$ is even,}\\
\chi_p(\det \M_2)\chi_p(a)^n\varepsilon(p)^ne_{p^r}(-\overline{4a\det
  \M_2}Q^*_2(\m)), &\mbox{if $r$ is odd},
\end{cases}
$$
if $p\nmid 2\det \M$.
The following lemma now follows from  executing the sum over $a$.

\begin{lemma}
\label{expQ}
Let $p$ be a prime with $p\nmid 2\det \M_{2}$. Then for even $n$ we have
$$
\mathcal Q_{p^r}(\m)=\varepsilon(p)^{nr}\chi_p(\det
\M_2)^rp^{\frac{nr}{2}}c_{p^r}\left(Q^*_2(\m)\right). 
$$
For odd $n$ we have
$$
\mathcal Q_{p^r}(\m)=\begin{cases}
p^{\frac{nr}{2}}c_{p^r}\left(Q^*_2(\m)\right), &
\mbox{if $r$ is even},\\
\varepsilon(p)^n \chi_p(-1)p^{\frac{nr}{2}}g_{p^r}(Q^*_2(\m)), &
\mbox{if $r$ is odd}.
\end{cases}
$$
\end{lemma}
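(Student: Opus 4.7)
The plan is to start from the explicit formula for $\cQ_{p^r}(\m)$ displayed just before the statement of the lemma, which was obtained by applying Lemma~\ref{gs} to the quadratic form $aQ_2(\k)$ (whose underlying matrix is $a\M_2$, of determinant $a^n\det\M_2$), and then to execute the outer sum over $a\in(\ZZ/p^r\ZZ)^*$. The key device is the bijective substitution $b\equiv-\overline{4a\det\M_2}\bmod{p^r}$, which is legitimate because $p\nmid 2\det\M_2$. After this change of variables the phase becomes $e_{p^r}(bQ_2^*(\m))$, ready to be recognised as either a Ramanujan sum or a Gauss sum.

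When $n$ is even, $\chi_p(a)^n=1$ on $(\ZZ/p^r\ZZ)^*$, so in both parities of $r$ the $a$-sum collapses to $c_{p^r}(Q_2^*(\m))$ after substitution. For $r$ odd the surviving prefactor $\chi_p(\det\M_2)\varepsilon(p)^n$ is exactly $\chi_p(\det\M_2)^r\varepsilon(p)^{nr}$, while for $r$ even both factors reduce to $1$ (using $\varepsilon(p)^4=1$ together with $4\mid nr$, and $\chi_p^2=1$), so the stated form matches. When $n$ is odd and $r$ is even, no character factor is present from Lemma~\ref{gs}, and the Ramanujan sum $c_{p^r}(Q_2^*(\m))$ emerges directly.

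The only substantive case is $n$ and $r$ both odd. Here $\chi_p(a)^n=\chi_p(a)$, and under the substitution we compute $\chi_p(a)=\chi_p(-1)\chi_p(\det\M_2)\chi_p(b)$ (using $\chi_p(4)=1$). Combining with the leading $\chi_p(\det\M_2)\varepsilon(p)^n$ and the identity $\chi_p(\det\M_2)^2=1$, the $a$-sum reduces to $\chi_p(-1)\varepsilon(p)^n\sideset{}{^*}\sum_b\chi_p(b)e_{p^r}(bQ_2^*(\m))$; since $\chi_p$ vanishes on multiples of $p$, the restricted sum equals the unrestricted one and so yields precisely $g_{p^r}(Q_2^*(\m))$, which is the claimed formula. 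I do not foresee any real obstacle: the whole argument is a direct manipulation of the formula preceding the statement, with the only care needed being the bookkeeping of $\chi_p$ and $\varepsilon(p)$ factors through the substitution.
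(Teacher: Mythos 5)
Your proposal is correct and follows exactly the route the paper intends: it starts from the explicit formula displayed just before the lemma (itself obtained by applying Lemma~\ref{gs} with matrix $a\M_2$), and then "executes the sum over $a$" via the unit substitution $b=-\overline{4a\det\M_2}$, keeping careful track of the $\chi_p$ and $\varepsilon(p)$ factors. The paper gives no further detail — it simply states that the lemma follows from carrying out the $a$-sum — and your computation supplies precisely the bookkeeping that was being left implicit.
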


Let 
\begin{equation}\label{eq:NM}
N=\begin{cases}
2\det \M_2 Q^*_2(\m), & \mbox{if $Q^*_2(\m)\neq 0$,}\\ 
2\det \M_2 , &\mbox{otherwise}.
\end{cases}
\end{equation}
We now turn to the average order of $\mathcal{Q}_q(\m)$, as one sums
over $q$ coprime to $M$ for some fixed $M\in \NN$ divisible by $N$. 
For this we will use Perron's formula unless $n$ is even and $Q_{2}^{*}(\m)\neq 0$, 
a case that can be handled trivially as follows.

\begin{lemma}\label{lem:q-triv}
Let $M \in \NN$ with $N\mid M$ and let $\ve>0$. Assume that $n$ is even and $Q_{2}^{*}(\m)\neq 0$. 
Then we have 
$$ 
\sum_{\substack{q\leq x\\ (q,M)=1}}|\cQ_{q}(\m)|
\ll
x^{\frac{n}{2}+1+\ve}M^{\ve}.
$$ 
\end{lemma}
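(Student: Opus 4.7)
The plan is to combine the explicit evaluation of $\cQ_{p^r}(\m)$ from the even-$n$ branch of Lemma \ref{expQ} with the multiplicativity relation of Lemma \ref{lem:mult2}, exploiting the coprimality imposed by the hypotheses $N\mid M$ and $(q,M)=1$. The hypothesis that $Q_{2}^{*}(\m)\neq 0$ is crucial, since it guarantees that $Q_{2}^{*}(\m)$ actually appears as a factor of $N$, and hence of $M$.

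First I would observe that since $N=2\det\M_2\cdot Q_2^*(\m)$ divides $M$ and $(q,M)=1$, every prime $p$ dividing $q$ satisfies $p\nmid 2\det\M_2$ and $p\nmid Q_2^*(\m)$. Applying Lemma \ref{expQ} at each prime power $p^r\|q$ therefore gives
$$
|\cQ_{p^r}(\m)| = p^{nr/2}\,|c_{p^r}(Q_2^*(\m))|,
$$
since the factors $\varepsilon(p)^{nr}$ and $\chi_p(\det\M_2)^r$ are unimodular. Combining via Lemma \ref{lem:mult2} and the multiplicativity of the Ramanujan sum in the modulus yields
$$
|\cQ_q(\m)| = q^{n/2}\,|c_q(Q_2^*(\m))|.
$$

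Next I would use the fact that $(q,Q_2^*(\m))=1$ to collapse the Ramanujan sum: $c_q(Q_2^*(\m))=\mu(q)$, which is supported on squarefree $q$. The sum to be estimated then reduces to
$$
\sum_{\substack{q\leq x\\ (q,M)=1\\ q\text{ squarefree}}} q^{n/2}
\;\leq\;
\sum_{q\leq x} q^{n/2}
\;\ll\;
x^{n/2+1},
$$
which is in fact sharper than the claimed bound, with no $\ve$-losses or $M$-dependence at all.

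I do not anticipate any real obstacle; the argument is essentially a direct bookkeeping exercise using tools already developed. The extra slack $x^{\ve}M^{\ve}$ in the statement is presumably inserted for uniformity with companion estimates in the remaining cases (odd $n$, or $Q_2^*(\m)=0$), where Perron's formula and genuine cancellation in Gauss sums $g_{p^r}$ will be required and will produce genuine $\ve$-losses.
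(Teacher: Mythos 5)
Your proof is correct and follows essentially the same route as the paper: combine the even-$n$ explicit formula of Lemma \ref{expQ} with multiplicativity (Lemma \ref{lem:mult2}) to get $|\cQ_q(\m)| = q^{n/2}|c_q(Q_2^*(\m))|$, then use coprimality of $q$ with $N$ to kill the Ramanujan factor. The paper invokes $|c_q(a)|\leq(q,a)=1$ where you observe $c_q(Q_2^*(\m))=\mu(q)$, but both yield the same conclusion $\ll x^{n/2+1}$, stronger than stated.
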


\begin{proof}
Combining Lemma \ref{expQ}  with the multiplicativity relation Lemma \ref{lem:mult2} we obtain
$$
\sum_{\substack{q\leq x\\ (q,M)=1}}|\cQ_{q}(\m)|
\leq x^{\frac{n}{2}} 
\sum_{\substack{q\leq x\\ (q,M)=1}}|c_{q}(Q_{2}^{*}(\m))|.
$$
The lemma is therefore an  easy consequence of the 
inequality $|c_q(a)|\leq (q,a)$ satisfied by  the Ramanujan sum.
\end{proof}

Let $\chi$ be 
a non-principal Dirichlet character with conductor $c_\chi$. 
It will be convenient to recall some preliminary facts concerning the size of Dirichlet $L$-functions $L(s,\chi)$ in the critical strip.  
We begin by recalling the convexity bound 
\begin{equation}\label{eq:convex}
L(\sigma+it,\chi) \ll (c_\chi |t|)^{\frac{1-\sigma}{2}+\ve},
\end{equation}
for any $\sigma\in [0,1]$ and $|t|\geq 1$.
Next we claim that 
\begin{align}
\label{eq:l-series-bound}
\int_{\frac{1}{2}-iT}^{\frac{1}{2}+iT}|L(s,\chi)|^2\frac{\d s}{|s|}\ll
c_{\chi}^{\frac{7}{16}+\ve}T^{\varepsilon}.
\end{align}
In order to show this 
we break the  integral into dyadic blocks, deducing that it is
dominated by
$$
\sum_{\substack{
Y\;\text{dyadic}\\
\frac{1}{2}<Y\leq T
}}\frac{1}{1+Y}\int_{Y}^{2Y}\left|L\left(\frac{1}{2}+it,\chi\right)\right|^2\d
t. 
$$ 
For small values of $Y$ we use  
Heath-Brown's \cite{hb-hybrid}  
hybrid bound $L(\frac{1}{2}+it,\chi)\ll (c_\chi |t|)^{\frac{3}{16}+\ve}$, for $|t|\geq 1$, 
to get
$$
\frac{1}{1+Y}\int_{Y}^{2Y}\left|L\left(\frac{1}{2}+it,\chi\right)\right|^2
\d t\ll
c_{\chi}^{\frac{3}{8}+\ve}\sqrt{Y}.
$$ 
For  larger values of $Y$ we use the approximate functional equation to replace
the $L$-value by a series of length $\sqrt{c_{\chi}Y}$, and then use the
mean value theorem for Dirichlet polynomials (see Iwaniec and Kowalski \cite[Theorem 9.1]{HIEK}, for example). This gives
$$ 
\frac{1}{1+Y}\int_{Y}^{2Y}\left|\sum_{n\leq
\sqrt{c_{\chi}Y}T^{\ve}}\frac{\chi(n)}{\sqrt{n}}n^{-it}\right|^2\d t\ll
\left(1+\sqrt{\frac{c_{\chi}}{Y}}\right)T^{\ve}.
$$ Summing over all dyadic blocks, we easily arrive  at the claimed bound
\eqref{eq:l-series-bound}.

For $s\in \CC$ let 
$\sigma=\Re(s)$.
Returning now to the application of Perron's formula, we set
$$ 
\xi_M(s;\m)=\sum_{(q,M)=1}\frac{\cQ_{q}(\m)}{q^s}.
$$ 
By \eqref{cor:Q_p^r} this series is absolutely convergent for $\sigma>\frac{n}{2}+2$.
When $n$ is even and $Q_2^*(\m)\neq 0$ it is  
absolutely convergent for $\sigma>\frac{n}{2}+1$, by Lemma \ref{lem:q-triv}. 
For any  
$x-\frac{1}{2}\in \ZZ$ and $T>0$  we obtain
\begin{align}
\label{perron}
\sum_{\substack{q\leq x\\ 
(q,M)=1}}\cQ_{q}(\m)=\frac{1}{2\pi
i}\int_{c-iT}^{c+iT}\xi_M(s;\m) x^s\frac{\d s}{s}+O\left(
\frac{x^c}{T}\right),
\end{align}
where $c>\frac{n}{2}+2$.
We will take $T$ large enough in terms of $x$ and $|\m|$
so that the error term in the formula is
negligible. The analytic nature of the $L$-series can be revealed
using the explicit formulae that we enunciated in Lemma \ref{expQ} and depends on the parity of $n$.
For even $n$ we get
\begin{equation}
\label{eq:even-n}
\xi_M(s;\m)=\prod_{p\nmid M}\left\{\sum_{r=0}^{\infty}
\frac{\chi_p(\det \M_2)^r\varepsilon(p)^{nr}c_{p^r}\left(Q^*_2(\m)\right)}{p^{\left(s-\frac{n}{2}\right)r}}\right\}.
\end{equation}
For odd $n$ we get
\begin{equation}
\label{eq:odd-n}
\xi_M(s;\m)=\prod_{p\nmid M}\left\{\sum_{r\;\text{even}}
\frac{c_{p^r}\left(Q^*_2(\m)\right)}{p^{\left(s-\frac{n}{2}\right)r}}+
\chi_p(-1)\varepsilon(p)^n\sum_{r\;\text{odd}}
\frac{g_{p^r}\left(Q^*_2(\m)\right)}{p^{\left(s-\frac{n}{2}\right)r}}\right\}.
\end{equation}
The following result handles the case in which $Q_{2}^{*}(\m)=0$.

\begin{lemma}\label{lem:0}
Let $M \in \NN$ with $N\mid M$ and let $\ve>0$. Assume that
$Q^*_2(\m)=0$.   Then we have 
$$ 
\sum_{\substack{q\leq x\\ (q,M)=1}}\cQ_{q}(\m)\ll 
\begin{cases}
x^{\frac{n+3}{2}+\ve}M^{\ve},  &
\mbox{if $(-1)^{\frac{n}{2}}\det \M_2\neq \square$},\\
x^{\frac{n}{2}+2},  & 
\mbox{if $(-1)^{\frac{n}{2}}\det \M_2=\square$.}
\end{cases}
$$ 
\end{lemma}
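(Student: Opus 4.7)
The plan is to apply Perron's formula \eqref{perron} to $\xi_M(s;\m)$ and exploit the simplifications that occur when $Q_2^*(\m)=0$: indeed, $c_{p^r}(0)=\phi(p^r)$ for $r\geq 1$, while $g_{p^r}(0)=p^{r-1}\sum_{y\bmod p}\chi_p(y)=0$ for $r\geq 1$. Substituting these values into \eqref{eq:even-n} and \eqref{eq:odd-n} and evaluating the resulting geometric series at each prime, one finds, for even $n$,
$$
\xi_M(s;\m)=\frac{L_M(s-\tfrac{n}{2}-1,\psi)}{L_M(s-\tfrac{n}{2},\psi)},
$$
where $\psi$ is the Kronecker character with $\psi(p)=\chi_p((-1)^{n/2}\det\M_2)$ for $p\nmid 2\det\M_2$ (using $\varepsilon(p)^2=\chi_p(-1)$), while for odd $n$ (where only even $r$ contribute, since $g_{p^r}(0)=0$), the same manipulation gives
$$
\xi_M(s;\m)=\frac{\zeta_M(2s-n-2)}{\zeta_M(2s-n-1)}.
$$
Here $L_M$ and $\zeta_M$ denote the corresponding $L$- and zeta- functions with Euler factors at primes $p\mid M$ removed. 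Since $\det \M_2$ is a fixed integer (depending only on $Q_2$), the conductor $c_\psi$ is absolutely bounded.

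Applying \eqref{perron} with $c=\tfrac{n}{2}+2+\ve$ and $T=x^A$ for a sufficiently large absolute constant $A$, the Perron error $x^c/T$ becomes negligible; we shift the contour to the vertical line $\sigma_0=\tfrac{n+3}{2}$, picking up the residues of $\xi_M(s;\m)x^s/s$ at any poles in between. When $(-1)^{n/2}\det\M_2$ is not a rational square (interpreted as automatic when $n$ is odd), the character $\psi$ is non-principal in the even case and $L_M(s-\tfrac{n}{2}-1,\psi)$ is entire; for odd $n$ there is a simple pole at $s=\tfrac{n+3}{2}$ coming from $\zeta_M(2s-n-2)$, whose residue contributes $O(x^{(n+3)/2})$. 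On the line $\Re(s)=\sigma_0$, the denominator is bounded below by a positive constant via its absolutely convergent Euler product, while the finite product translating $L$ into $L_M$ at primes dividing $M$ contributes at most $M^\ve$, so $\xi_M(\sigma_0+it;\m)\ll M^\ve |L(\tfrac{1}{2}+it,\psi)|$ (with the analogous bound involving $\zeta$ in the odd case). Cauchy--Schwarz combined with \eqref{eq:l-series-bound} then yields
$$
\int_{-T}^{T}\frac{|L(\tfrac{1}{2}+it,\psi)|}{|\sigma_0+it|}\,\d t
\leq\Bigl(\int_{-T}^{T}\frac{\d t}{|\sigma_0+it|}\Bigr)^{1/2}\Bigl(\int_{-T}^{T}\frac{|L(\tfrac{1}{2}+it,\psi)|^2}{|\sigma_0+it|}\,\d t\Bigr)^{1/2}\ll T^\ve,
$$
since $c_\psi=O(1)$. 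Together with the horizontal contour segments (which are handled easily with $T=x^A$), this delivers the desired bound $x^{(n+3)/2+\ve}M^\ve$.

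In the remaining case ($n$ even and $(-1)^{n/2}\det\M_2$ a rational square), $\psi(p)=1$ for every $p\nmid M$, so $\xi_M(s;\m)=\zeta_M(s-\tfrac{n}{2}-1)/\zeta_M(s-\tfrac{n}{2})$, which has a simple pole at $s=\tfrac{n}{2}+2$ with residue $\prod_{p\mid M}(1-1/p)/\zeta_M(2)$. The residue contribution in Perron yields the main term of order $x^{n/2+2}$ claimed. Shifting the remaining integral to $\sigma_0=\tfrac{n+3}{2}$ and applying Cauchy--Schwarz together with the classical second-moment bound for $\zeta(\tfrac{1}{2}+it)$ bounds the residual integral by $x^{(n+3)/2+\ve}$, which is absorbed. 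The principal technical obstacle throughout is that the convexity bound \eqref{eq:convex} alone would contribute a factor of $T^{1/4+\ve}$ on the critical line and so would not suffice; the essential input that overcomes this is the hybrid $L^2$-mean \eqref{eq:l-series-bound}, invoked via Cauchy--Schwarz.
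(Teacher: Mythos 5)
Your explicit factorisations $\xi_M(s;\m)=L_M(s-\tfrac{n}{2}-1,\psi)/L_M(s-\tfrac{n}{2},\psi)$ for even $n$ and $\xi_M(s;\m)=\zeta_M(2s-n-2)/\zeta_M(2s-n-1)$ for odd $n$ are both correct, and for the key case ($n$ even, $(-1)^{n/2}\det\M_2\neq\square$) your argument is in substance the same as the paper's: shift to $\sigma_0=\tfrac{n+3}{2}$ with no pole crossed, observe that the denominator and the finite Euler products at $p\mid M$ contribute $O(M^\ve)$, and finish with Cauchy--Schwarz together with \eqref{eq:l-series-bound}.

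There is, however, a genuine flaw in your treatment of odd $n$. The pole of $\zeta_M(2s-n-2)$ sits exactly at $s=\tfrac{n+3}{2}$, i.e.\ \emph{on} the line you propose to shift to, not at a point ``in between'' the two vertical lines. Consequently the contour passes through the pole and the integral along it is ill-defined; relatedly, on that line the numerator is $\zeta_M(1+2it)$, which blows up as $t\to 0$, so the bound $\xi_M(\sigma_0+it;\m)\ll M^\ve|\zeta(1+2it)|$ fails to be integrable near $t=0$, and \eqref{eq:l-series-bound} (a critical-line estimate) is in any case not applicable to $\zeta$ on the $1$-line. The paper deals with odd $n$ more simply: from the Euler product, $\xi_M(s;\m)$ is absolutely convergent and $O(M^\ve)$ in the half-plane $\sigma>\tfrac{n+3}{2}$, so one shifts the contour only to $c_0=\tfrac{n+3}{2}+\ve$, strictly to the right of the pole, and the vertical integral is then trivially $\ll M^\ve x^{(n+3)/2+\ve}$. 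Your argument is easily repaired by making the same choice of $c_0$.

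For the second case, $(-1)^{n/2}\det\M_2=\square$, the paper simply applies the triangle inequality together with the pointwise bound \eqref{cor:Q_p^r}, giving $\sum_{q\le x}|\cQ_q(\m)|\ll\sum_{q\le x}q^{n/2+1}\ll x^{n/2+2}$ directly and with no $M$-dependence at all. Your Perron-plus-residue computation is essentially correct but unnecessary, and as written it leaves a secondary error of size $O(x^{(n+3)/2+\ve}M^\ve)$ which is not dominated by $x^{n/2+2}$ uniformly in $M$; the trivial bound sidesteps this entirely.
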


Here, and after, for  any complex number $z$ we write $z=\square$ if and only if there 
exists an integer   
$j$ such that $z=j^{2}.$  Thus 
 the sum
in question is bounded by  
$O(x^{\frac{n+3}{2}+\ve}M^{\ve})$ when $n$ is odd since it is then
impossible for  
$(-1)^{\frac{n}{2}}\det \M_2$ to be the square of an integer.

\begin{proof}[Proof of Lemma \ref{lem:0}]
The second part of the lemma is a trivial consequence of \eqref{cor:Q_p^r} and the triangle inequality. Turning to the first part we begin by supposing that  $n$ is even and 
$(-1)^{\frac{n}{2}}\det \M_2\neq \square$. 
If $Q^*_2(\m)=0$ then
$c_{p^r}\left(Q^*_2(\m)\right)=\phi(p^r)$. 
It follows from \eqref{eq:even-n} that 
$$ 
\xi_M(s;\m)=L\left(s-1-\frac{n}{2},\psi\right)E_M(s),
$$ 
where $L(s,\psi)$ is the Dirichlet $L$-function associated to the
Jacobi symbol 
$$
\psi(\cdot )=\left(\frac{(-1)^{\frac{n}{2}}\det \M_2}{\cdot}
\right),
$$
with conductor $c_\psi=O(1)$, and where
$E_M(s)$ is an Euler product which converges absolutely
in the half plane $\sigma>\frac{n}{2}+1$ and satisfies the bound
$E_M(s)\ll M^\ve$ there. This gives the analytic continuation
of  $\xi_M(s;\m)$ up to $\sigma>\frac{n}{2}+1$.

Moving the contour of integration in \eqref{perron} to  $c_0=\frac{n+3}{2}$ 
and invoking the convexity estimate \eqref{eq:convex} to deal with the horizontal contours, 
we obtain
$$ 
\sum_{\substack{q\leq x \\(q,M)=1}}\cQ_{q}(\m)=\frac{1}{2\pi
i}\int_{c_0-iT}^{c_0+iT}\xi_M(s;\m)x^s\frac{\d s}{s}+
O\left(\frac{x^c}{T}+\frac{x^{c_0}M^\ve T^\ve}{T^{\frac{3}{4}}}
\right).
$$ 
Here we note that  $(-1)^{\frac{n}{2}}\det \M_2$ is
not a square and so the $L$-series does not have a pole in the region
$\sigma>c_0-\frac{1}{2}$. 
Taking $T=x^{n+4}$ the error term is seen to be 
 $O( x^{-\frac{n}{4}-\frac{3}{2}+\varepsilon}M^\ve)$. 
The remaining integral is estimated via 
\eqref{eq:l-series-bound}, which thereby  leads to the
first part of Lemma \ref{lem:0} when $n$ is even. 

If $n$ is odd and $Q^*_2(\m)=0$, then 
$c_{p^r}\left(Q^*_2(\m)\right)=\phi(p^r)$ and $g_{p^r}\left(Q^*_2(\m)\right)=0$. 
Hence $\xi_M(s;\m)$
is absolutely convergent and bounded by $O(M^\ve)$
in the half-plane $\sigma>\frac{n+3}{2}$.
This implies that we can shift the contour
in \eqref{perron} to $c_0=\frac{n+3}{2}+\ve$, without encountering any poles, 
leading to  a similar but simpler situation to that considered for  even $n$.
This completes the proof of Lemma \ref{lem:0}.
\end{proof}

Let us turn to the size of the exponential sums $\mathcal{Q}_q(\m)$
for generic $\m$, for which sharper bounds are required. 
Tracing through the proof one sees that if $n$ is even and 
$Q_{2}^{*}(\m)\neq 0$ then one is instead led 
to compare $\xi_M(s;\m)$ in \eqref{eq:even-n} with 
$L\left(s-\frac{n}{2},\psi\right)^{-1}$. 
To improve on Lemma \ref{lem:q-triv} 
one therefore requires a good zero-free region for 
$L\left(s-\frac{n}{2},\psi\right)$ to the left of the line $\sigma=\frac{n}{2}+1$, for which the 
unconditional picture is somewhat lacking. 
However, even if one is able to save a power of $x$ in Lemma \ref{lem:q-triv}, this still does not seem to be enough to handle $n=6$ in  Theorem \ref{th1}.
The following result deals with the case of odd $n$ when $Q_2^*(\m)\neq 0$.

\begin{lemma}\label{lem:2}
Let $M \in \NN$ with $N\mid M$ and let $\ve>0$. Assume that $n$ is
odd and $Q_{2}^{*}(\m)\neq 0$. Then we have 
$$ 
\sum_{\substack{q\leq x\\ (q,M)=1}}\cQ_{q}(\m)\ll 
\begin{cases}
|\m|^{\frac{7}{16}+\ve}x^{\frac{n}{2}+1+\ve}M^{\ve}, 
& \mbox{if 
$(-1)^{\frac{n-1}{2}}Q_{2}^{*}(\m)\neq \square$,}\\
x^{\frac{n+3}{2}+\ve}M^{\ve},
& \mbox{if 
$(-1)^{\frac{n-1}{2}}Q_{2}^{*}(\m)= \square$.}
\end{cases}
$$ 
\end{lemma}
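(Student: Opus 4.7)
The plan is to follow the template of Lemma \ref{lem:0}, exploiting the refined structure of \eqref{eq:odd-n} in the odd-$n$ case to factor out a non-principal Dirichlet $L$-function that absorbs the Gauss-sum contributions $g_{p^r}(Q_2^*(\m))$. Since $N\mid M$ and $N=2\det\M_2\,Q_2^*(\m)$, any prime $p\nmid M$ satisfies $p\nmid 2\det\M_2 Q_2^*(\m)$; all terms with $r\geq 2$ in \eqref{eq:odd-n} then vanish, and the identity $g_p(a)=\chi_p(a)\varepsilon(p)\sqrt p$ shows that each local factor at $p\nmid M$ collapses to $1+\psi_\m(p)\,p^{(n+1)/2-s}$, with
$$
\psi_\m(\cdot)=\Bigl(\frac{(-1)^{(n-1)/2}Q_2^*(\m)}{\cdot}\Bigr).
$$
Applying $1+y=(1-y^2)/(1-y)$ to each Euler factor gives the factorisation
$$
\xi_M(s;\m)=\frac{L(s-\tfrac{n+1}{2},\psi_\m)}{\zeta(2s-(n+1))}\,G_M(s;\m),
$$
where $G_M(s;\m)$ is a finite Euler product over $p\mid M$ which is holomorphic and of size $O(M^\ve)$ for $\sigma\geq \tfrac{n}{2}+\tfrac{1}{2}+\ve$.

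With this factorisation in hand, I invoke Perron's formula \eqref{perron} with $c$ slightly larger than $\tfrac{n}{2}+2$ and shift the contour to $c_0=\tfrac{n}{2}+1+\ve$, where $\zeta(2s-(n+1))^{-1}=O(1)$ and $G_M=O(M^\ve)$. The convexity estimate \eqref{eq:convex} controls the horizontal contours once the truncation parameter $T$ is taken polynomially large in $x$ and $|\m|$.

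If $(-1)^{(n-1)/2}Q_2^*(\m)\neq\square$, then $\psi_\m$ is non-principal with conductor $c_{\psi_\m}\ll|\m|^2$, and $L(\cdot,\psi_\m)$ is entire. On the vertical line $\Re(s)=c_0$, Cauchy--Schwarz together with \eqref{eq:l-series-bound} gives
$$
\int_{-T}^{T}\bigl|L(\tfrac{1}{2}+\ve+it,\psi_\m)\bigr|\,\frac{dt}{|c_0+it|}\ll c_{\psi_\m}^{7/32+\ve}T^{\ve}\ll|\m|^{7/16+\ve}T^\ve,
$$
and multiplying by $x^{c_0}M^\ve$ yields the first bound of the lemma. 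If instead $(-1)^{(n-1)/2}Q_2^*(\m)=\square$, then $\psi_\m$ is principal modulo a divisor of $M$, so $L(s-\tfrac{n+1}{2},\psi_\m)$ has a simple pole at $s=\tfrac{n+3}{2}$ with residue $O(M^\ve)$. Shifting the contour as above sweeps across this pole, whose residue contributes the claimed main term $O(x^{(n+3)/2}M^\ve)$, while the shifted integral is of strictly lower order.

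The main technical obstacle is the denominator $\zeta(2s-(n+1))^{-1}$: at the natural shift $\sigma=\tfrac{n}{2}+1$ one has $2s-(n+1)=1+2it$, which lies precisely on the $1$-line of $\zeta$. Moving a small $\ve$ to the right side-steps this, but also means \eqref{eq:l-series-bound} is applied at $\Re(s)=\tfrac{1}{2}+\ve$ rather than exactly $\tfrac{1}{2}$; this causes no difficulty, as the Heath-Brown hybrid bound and the Dirichlet-polynomial mean-value theorem used in its proof remain valid (and in fact improve) slightly to the right of the critical line.
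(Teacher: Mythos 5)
Your proposal is correct and follows the same route as the paper's proof: factor $\xi_M(s;\m)$ through the Jacobi symbol $\psi_\m$, shift the Perron contour to $c_0 = \tfrac{n}{2}+1+\ve$, treat the horizontal contours by convexity, and extract $|\m|^{7/16}$ from \eqref{eq:l-series-bound} via Cauchy--Schwarz together with $c_{\psi_\m}\ll|\m|^2$. You are slightly more explicit in two places where the paper is terse: (i) you display the denominator $\zeta(2s-(n+1))$ coming from the identity $1+y=(1-y^2)/(1-y)$, whereas the paper folds this into the generic Euler product $E_M(s)$ with stated region of absolute convergence $\sigma>\tfrac{n}{2}+1$; and (ii) you spell out the Cauchy--Schwarz step to pass from the $L^2$ bound \eqref{eq:l-series-bound} to an $L^1$ estimate, and flag that it is being applied at $\Re s=\tfrac12+\ve$ rather than exactly at $\tfrac12$, both of which the paper leaves implicit. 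One cosmetic difference in the $\square$ case: the paper sidesteps the pole at $s=\tfrac{n+3}{2}$ by shifting only to $c_0=\tfrac{n+3}{2}+\ve$, whereas you shift past it and collect the residue $O(x^{(n+3)/2}M^\ve)$ plus a lower-order remainder integral; both are valid and give the stated bound.
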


\begin{proof}
Recalling \eqref{eq:odd-n} we note that 
$
g_{p}(a)=\chi_{p}(a)\ve(p)p^{\frac{1}{2}},
$
for any non-zero integer $a$ that is coprime to $p$.
Hence we
deduce in this case that
$$ 
\xi_M(s;\m)=L\left(s-\frac{n+1}{2},\psi_{\m}\right)E_M(s),
$$  
where $\psi_{\m}$ is the Jacobi
symbol
$$
\psi_{\m}(\cdot) = 
\left(\frac{(-1)^{\frac{n-1}{2}}Q^*_2(\m)}{\cdot}\right),
$$ 
with conductor $4|Q_2^*(\m)| =O( |\m|^2)$.
Also $E_M(s)$ is an
Euler product which now converges absolutely in the half plane
$\sigma>\frac{n}{2} +1$  
and satisfies the bound  $E_M(s)\ll M^\ve$ there.  
Under the assumption that $(-1)^{\frac{n-1}{2}}Q^*_2(\m)\neq \square$, the $L$-series $\xi_M(s;\m)$ does not have a pole in the region
$\sigma>\frac{n}{2}+1$. 
Moving the contour of integration in
\eqref{perron} to $c_0=\frac{n}{2}+1+\ve$, and using the convexity estimate \eqref{eq:convex}, we therefore get
$$ 
\sum_{\substack{q\leq x\\ 
(q,M)=1}}\cQ_{q}(\m)=\frac{1}{2\pi
i}\int_{c_0-iT}^{c_0+iT}\xi_M(s;\m)x^s\frac{\d s}{s}+
O\left(\frac{x^c}{T}
+\frac{|\m|^{\frac{1}{2}+\ve}x^{c_0}M^\ve}{T^{\frac{3}{4}}}\right),
$$ 
in this case. 
Estimating the remaining integral using
\eqref{eq:l-series-bound}, as before, we conclude the proof of the lemma when
$(-1)^{\frac{n-1}{2}}Q^*_2(\m)\neq \square$  by taking $T$ sufficiently large.

Finally, if 
$(-1)^{\frac{n-1}{2}}Q^*_2(\m)=\square$, then 
$\xi_M(s;\m)$ is regularised by $\zeta(s-\frac{n+1}{2})$ and has a pole at $s=\frac{n+3}{2}$. In this case we move the line of integration back to 
$c_0=\frac{n+3}{2}+\ve$,
which easily leads to the statement of the lemma.
\end{proof}

\section{Analysis of $\cD_d(\m)$}\label{sec:dsum1}

The aim of this section is to collect together everything we need to
know about the sums
$$
\cD_d(\m)=\sum_{\k \in \hat V(\ZZ/d\ZZ)} e_d(\m.\k),
$$
for given $\m \in \ZZ^n$ and $d\in \NN$.  
Here
we write $\hat W$  to denote the affine cone above a projective
variety $W$.  
The estimates in this section pertain to the quadratic forms considered in Theorem~\ref{th1}, so that $V$ is non-singular and we may  
make  use of the geometric facts recorded in \S \ref{geometry}.
Our starting point is Lemma \ref{lem:mult2}, which yields
$
\cD_{d_{1}d_{2}}(\m)=\cD_{d_{1}}(\m)\cD_{d_{2}}(\m)
$
if $(d_{1},d_{2})=1$, rendering it sufficient to understand the
behaviour of the sum at prime powers.

For any $\mathbf{m}\in \ZZ^n$ 
we begin by examining the case in which $d=p$, a prime.  Introducing 
a free sum over elements of $\FF_p^*$, we find that 
\begin{align*}
(p-1)\cD_p(\m)
&=\sum_{a=1}^{p-1}\sum_{\substack{\x\in \hat V(\FF_p)}}e_p(\m.\x)\\
&=\sum_{\substack{\x\in \hat V(\FF_p)}}\sum_{a=1}^{p-1}e_p(a\m.\x)\\
&=p\#\hat V_\m (\FF_p)-\#\hat V (\FF_p),
\end{align*}
where $V_\m$ is the variety obtained by intersecting $V$ with the hyperplane $\m.\x=0$, and 
$\hat V_\m$ is the corresponding affine variety lying above it. 
Rearranging, we obtain
\begin{equation}\label{eq:goat}
\cD_p(\m)=\Big(1-\frac{1}{p}\Big)^{-1}\left(
\#\hat V_\m(\FF_p)-p^{-1}\#\hat V(\FF_p)
\right).
\end{equation}
Now for any  complete intersection $W\subset \PP^m$, which is non-singular modulo $p$ and has dimension $e\geq 1$, it follows from 
Deligne's resolution of the Weil conjectures \cite{deligne} that
$$
|\#W(\FF_p)-(p^{e}+p^{e-1}+\cdots +1)|= O_{d,m}(p^{\frac{e}{2}}),
$$
where $d$ is the degree of $W$. In particular, since 
$$
\#W(\FF_p)=\frac{\#\hat W(\FF_p)-1}{p-1},
$$
we deduce that 
\begin{equation}\label{eq:deligne}
\#\hat W (\FF_p)=p^{e+1} + O_{d,m}(p^{\frac{e+2}{2}}).
\end{equation}
In our setting we have $e=n-3$ for $V$ and $e=n-4$ for $V_\m$ if $p\nmid \m$. 
We may now record the following inequalities.

\begin{lemma}\label{lem:r=1}
We have 
$$
\cD_p(\m)\ll 
\begin{cases}
p^{\frac{n-2}{2}}, & \mbox{if $p\nmid G(\m)$, }\\
p^{\frac{n-1}{2}}, & \mbox{if $p\mid G(\m)$ and $p\nmid \m$, }\\
p^{n-2}, & \mbox{if $p\mid \m$.}
\end{cases}
$$
\end{lemma}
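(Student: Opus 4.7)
The plan is to leverage the formula \eqref{eq:goat} in all three cases, reducing the problem to point counts on $\hat V$ and $\hat V_\m$ governed by Deligne's estimate \eqref{eq:deligne}. In the case $p\mid \m$, the hyperplane condition $\m\cdot\x\equiv 0\bmod{p}$ is vacuous, so $\hat V_\m=\hat V$ and \eqref{eq:goat} reduces to $\cD_p(\m)=\#\hat V(\FF_p)$; the bound $\cD_p(\m)\ll p^{n-2}$ then follows immediately from \eqref{eq:deligne}.

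For $p\nmid \m$ and $p\nmid G(\m)$, the point $[\m]$ lies outside $V^*(\FF_p)$, so the hyperplane $H_\m=\{\m\cdot\x=0\}$ is nowhere tangent to $V$ modulo $p$. Bertini's theorem then guarantees that $V_\m$ is a non-singular complete intersection of dimension $n-4$ over $\FF_p$. Applying \eqref{eq:deligne} to both $V$ and $V_\m$ yields
\begin{equation*}
\#\hat V(\FF_p) = p^{n-2}+O\bigl(p^{(n-1)/2}\bigr), \qquad \#\hat V_\m(\FF_p) = p^{n-3}+O\bigl(p^{(n-2)/2}\bigr).
\end{equation*}
Substituting into \eqref{eq:goat}, the leading $p^{n-3}$ terms cancel, delivering $\cD_p(\m)\ll p^{(n-2)/2}$.

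The remaining case $p\nmid \m$ with $p\mid G(\m)$ is more delicate, since $[\m]\in V^*(\FF_p)$ and $V_\m$ acquires singularities along the contact locus $\Sigma_\m = \{x\in V:T_xV\subseteq H_\m\} = \pi_2^{-1}([\m])\subset I$, where $I$ is the incidence variety from \S\ref{geometry}. My strategy will be to apply a Hooley--Katz-type bound for singular complete intersections, which furnishes an estimate of the shape $\#V_\m(\FF_p) = \#\PP^{n-4}(\FF_p)+O(p^{(n-3+\dim\Sigma_\m)/2})$, and then to verify that $\dim\Sigma_\m\leq 0$ for every $[\m]\in V^*(\FF_p)$. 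Since $\dim I = n-2 = \dim V^*$ and $\pi_2$ is dominant, the generic fibre is zero-dimensional; to promote this to every fibre I plan to exploit the hypothesis $p\nmid\Delta_V$, which forces the binary form $P(\b)=\det\M(\b)$ to have $n$ distinct roots modulo $p$ and the pencil to have rank at least $n-1$ everywhere on $\PP^1(\FF_p)$. Granting $\dim\Sigma_\m\leq 0$, the Hooley--Katz estimate then yields $\#\hat V_\m(\FF_p)=p^{n-3}+O(p^{(n-1)/2})$, and substitution into \eqref{eq:goat} produces $\cD_p(\m)\ll p^{(n-1)/2}$. Controlling $\dim\Sigma_\m$ uniformly across all $[\m]\in V^*(\FF_p)$ will be the principal technical obstacle.
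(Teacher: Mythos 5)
Your treatment of the first two cases tracks the paper's proof essentially verbatim: reduce to \eqref{eq:goat}, invoke Deligne's estimate \eqref{eq:deligne}, and observe the cancellation of the main terms when $V_\m$ is non-singular. Two small remarks. Invoking Bertini is a slight misnomer --- Bertini is a statement about \emph{generic} hyperplane sections, whereas here $p\nmid G(\m)$ tells you directly that $[\m]\notin V^*(\FF_p)$, hence that this particular $H_\m$ is nowhere tangent and $V_\m$ is non-singular; no genericity argument is needed. You should also flag $n=4$, where $V_\m$ is zero-dimensional and the cancellation argument degenerates; the paper disposes of this case separately by noting $\#V_\m(\FF_p)=O(1)$.

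The third case is where your proposal has a genuine gap, which to your credit you flag yourself. You correctly identify both the framework --- Hooley's estimate for a complete intersection with low-dimensional singular locus, which is exactly the result in \cite{hooley} that the paper uses --- and the crux: you must bound $\dim \Sigma_\m$ uniformly over \emph{every} $[\m]\in V^*(\FF_p)$, not merely generically. But the route you sketch does not get there. The facts that $P(\b)=\det\M(\b)$ has distinct roots and that $\rank\M(\b)\geq n-1$ on all of $\PP^1$ concern degeneracies of the quadrics in the pencil $b_1Q_1+b_2Q_2$; they carry no direct information about the fibres of $\pi_2:I\to V^*$, i.e.\ about which points of $V$ are touched tangentially by a fixed hyperplane. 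Knowing $\dim I=\dim V^*$ and that $\pi_2$ is dominant only bounds the \emph{generic} fibre, and a dominant morphism of this kind can perfectly well have positive-dimensional fibres over a proper closed subset. The ingredient you are missing is Zak's theorem on tangencies (Theorem~2 in the appendix, by Katz, to \cite{hooley}): for a smooth, non-degenerate projective variety the contact locus of any tangent hyperplane has bounded dimension, and in the present codimension-$2$ setting this gives $\dim\Sigma_\m\leq 0$ for every $[\m]$. Once that is in hand, Hooley's estimate yields $\#\hat V_\m(\FF_p)=p^{n-3}+O(p^{(n-1)/2})$ and \eqref{eq:goat} finishes the case exactly as you describe.
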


\begin{proof}
 Without loss of generality 
 we may assume
 that $p\nmid
\Delta_{V}$, since otherwise the result is trivial.  
Our starting point is \eqref{eq:goat}. 
If $p\mid \m$ then $\cD_{p}(\m)=\#\hat V(\FF_{p})$ and the claim follows from \eqref{eq:deligne}.

If $p\nmid G(\m)$, so that $V_{\m}$ is non-singular modulo $p$, 
then an application of  \eqref{eq:deligne} yields
\begin{align*}
\cD_p(\m)
&=\Big(1-\frac{1}{p}\Big)^{-1}\left(
p^{n-3}+O(p^{\frac{n-2}{2}})
-p^{-1}(p^{n-2}+O(p^{\frac{n-1}{2}}))\right)=
O(p^{\frac{n-2}{2}}),
\end{align*}
if $n\geq 5$. When $n=4$ this is trivial since then 
$\#V_\m(\FF_p)=O(1)$. 
This establishes the claim. 

Finally, if $p\mid G(\m)$ and $p\nmid \m$, then 
$V_\m$ is singular and of codimension $1$ in $V$ 
modulo $p$. By a result of
Zak (see Theorem 2 in \cite[Appendix]{hooley}), the singular locus of
$V_\m$ has projective dimension $0$. Hence  the work of Hooley 
\cite{hooley} yields
$\# \hat V_\m(\FF_{p})=p^{n-3}+O(p^{\frac{n-1}{2}})$, which once
inserted into  \eqref{eq:goat} yields the desired inequality.
\end{proof}

We now turn our attention to higher prime powers.
Let $d=p^r$ for $r\geq 2$ and suppose that $G(\m)\neq 0$. We assume
that $p\nmid \Delta_V$ and $p\nmid \m$. 
Then it is easy to see that 
$$
\cD_{p^r}(\m)=\sum_{\substack{\x \in \hat V\left(\ZZ\slash p^r\ZZ
    \right)\\p\nmid \x}}e_{p^r}\left(\m.\x\right). 
$$
Mimicking the argument leading to  \eqref{eq:goat},
a line of attack that we already met in the proof of   Lemma \ref{rho(d)},  
we deduce from the explicit formula for the Ramanujan sum that 
$$
\phi(p^r)\mathcal
\cD_{p^r}(\m)=\sideset{}{^*}\sum_{a\bmod{p^r}}\sum_{\substack{\x \in
    \hat V\left(\ZZ\slash p^r\ZZ \right)\\p\nmid
    \x}}e_{p^r}\left(a\m.\x\right)=p^r\sum_{\substack{\x \in \hat
    V\left(\ZZ\slash p^r\ZZ \right)\\p^r|\m.\x\\p\nmid
    \x}}1-p^{r-1}\sum_{\substack{\x \in \hat V\left(\ZZ\slash p^r\ZZ
    \right)\\p^{r-1}|\m.\x\\p\nmid \x}}1. 
$$
In the second sum we write $\x=\y+p^{r-1}\z$ with $\y \bmod{p^{r-1}}$
and $\z \bmod{p}$, to get  
$$
\sum_{\substack{\x \in \hat V\left(\ZZ\slash p^r\ZZ
    \right)\\p^{r-1}|\m.\x\\p\nmid \x}}1=\sum_{\substack{\y \in \hat
    V\left(\ZZ\slash p^{r-1}\ZZ \right)\\p^{r-1}|\m.\y\\p\nmid
    \y}}\#\{\z: \;Q_i(\y+p^{r-1}\z)\equiv 0
\bmod{p^r},\;\;\text{for}\;\;i=1,2\}. 
$$
Since $p\nmid \Delta_V$, the 
count for the number $\z \bmod{p}$ is given by $p^{n-2}$. Setting 
$$
N(p^j,\m)=\#\{\x\in \hat V\left(\ZZ\slash p^j\ZZ \right):\;p\nmid \x,\;\;\m.\x\equiv 0 \bmod{p^j}\},
$$
we get
$$
\mathcal D_{p^r}(\m)=\frac{p^r}{\phi(p^r)}\left\{N(p^r,\m)-p^{n-3}N(p^{r-1},\m)\right\}.
$$
In particular an application of Hensel's lemma yields the following conclusion.

\begin{lemma}\label{lem:r>1}
Let $r\geq 2$. Then we have $\cD_{p^{r}}(\m)=0$ unless $p\mid \Delta_V G(\m)$.
\end{lemma}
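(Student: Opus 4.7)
The plan is to restrict to the case $p\nmid \Delta_V G(\m)$, exploit the closed-form identity for $\cD_{p^r}(\m)$ derived in the paragraph preceding the statement, and then Hensel-lift in order to establish the relation $N(p^r,\m)=p^{n-3}N(p^{r-1},\m)$. Since $G$ is a non-constant homogeneous polynomial in $\m$, the hypothesis $p\nmid G(\m)$ automatically forces $p\nmid \m$, so the formula
$$\cD_{p^{r}}(\m) = \frac{p^r}{\phi(p^r)}\bigl\{N(p^r,\m) - p^{n-3} N(p^{r-1},\m)\bigr\}$$
is available, and it suffices to show that each $\y\bmod{p^{r-1}}$ counted by $N(p^{r-1},\m)$ possesses exactly $p^{n-3}$ lifts $\x=\y+p^{r-1}\z\bmod{p^r}$ counted by $N(p^r,\m)$.

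Since $r\geq 2$, we have $2(r-1)\geq r$, so Taylor expansion reduces the three lift conditions on $\z\in\FF_p^n$ to the linear system
$$\z.\nabla Q_i(\y) \equiv -p^{-(r-1)}Q_i(\y)\bmod{p}\quad (i=1,2), \qquad \m.\z \equiv -p^{-(r-1)}(\m.\y)\bmod{p},$$
whose coefficient matrix $A\in\mathrm{Mat}_{3\times n}(\FF_p)$ has rows $\nabla Q_1(\y),\nabla Q_2(\y),\m$. The number of $\z$ solving this system equals $p^{n-3}$ as soon as $A$ has rank $3$ modulo $p$, and this is the only non-trivial point.

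To verify the rank, note first that $p\nmid \Delta_V$ forces the reduction of $V$ modulo $p$ to be a non-singular complete intersection, so the two gradients $\nabla Q_1(\y),\nabla Q_2(\y)$ are linearly independent modulo $p$ at every $\y$ with $p\nmid\y$ and $Q_1(\y)\equiv Q_2(\y)\equiv 0\bmod{p}$. Suppose, for contradiction, that $\m\equiv 2\M(\b)\y\bmod{p}$ for some $\b\in\FF_p^2$, in the notation of \eqref{eq:Mc}. Then $\m$ is a linear combination of $\nabla Q_1(\y)$ and $\nabla Q_2(\y)$ modulo $p$, so the hyperplane $\m.\x=0$ contains the tangent space to the reduction of $V$ at the smooth point $[\y]$. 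Hence $[\m]$ lies on the reduction of the dual variety $V^*$, which is cut out by $G=0$, forcing $p\mid G(\m)$ and contradicting our hypothesis. Thus $A$ has rank $3$, which yields $N(p^r,\m)=p^{n-3}N(p^{r-1},\m)$ and hence $\cD_{p^r}(\m)=0$.

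The main obstacle is the final step, where one needs the characterisation of $V^*$ as the image of the incidence correspondence to survive modulo $p$, together with the fact that the smoothness hypothesis on $V$ transfers to its reduction. This is precisely what the definition of $\Delta_V$ in \S\ref{geometry} was engineered to accommodate, so once those facts are invoked no further work is needed.
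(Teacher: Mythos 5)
Your proof is correct and fills in, with all the necessary details, the Hensel-lifting step that the paper merely asserts (the paper simply says ``In particular an application of Hensel's lemma yields the following conclusion'' after deriving the formula $\cD_{p^r}(\m)=\frac{p^r}{\phi(p^r)}\{N(p^r,\m)-p^{n-3}N(p^{r-1},\m)\}$). The observation that $p\nmid G(\m)$ forces $p\nmid\m$ (via homogeneity of $G$), the Taylor linearisation using $2(r-1)\geq r$, and, crucially, the verification that the $3\times n$ coefficient matrix has rank $3$ modulo $p$ — with the rank-drop case shown to contradict $p\nmid G(\m)$ because $[\m]$ would then lie on the reduction of $V^*$ — constitute exactly the argument the paper has in mind, relying on the same spreading-out facts that $\Delta_V$ encodes and that the paper already invokes in the proof of Lemma \ref{lem:r=1}.
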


We also require a general bound 
for  $\cD_{d}(\m)$. 
By  the orthogonality of characters we may write
$$
\mathcal D_{d}(\ma{m})=\frac{1}{d^{2}}\sum_{\b\bmod{d}} 
\mathcal D_{d}(\ma{m};\b),
$$
where
$$
\mathcal D_{d}(\ma{m};\b)=
\sum_{\k \bmod{d}} e_{d}\left(b_{1}Q_{1}(\k)+b_{2}Q_{2}(\k)+\m.\k\right).
$$
We proceed to extract the greatest common divisor
$h$ of $\b$ with $d$, writing $d=hd'$ and $\b=h\b'$, with
$(d',\b')=1$. Breaking the sum into congruence classes modulo $d'$ we
then see that
$$
\mathcal D_{d}(\ma{m};\b)=\sum_{\k'\bmod{d'}}\sum_{\k'' \bmod{h}} 
e_{d'}\left(b_{1}'Q_{1}(\k')+b_{2}'Q_{2}(\k')+h^{-1}\m.\k'\right)
e_{h}\left(\m.\k''\right).
$$
In particular  $h$ must be a divisor of $\m$ and, furthermore, if we
write $\m=h\m'$ then we have
$
\mathcal D_{d}(\ma{m};\b)=h^n \mathcal D_{d'}(\ma{m}';\b').
$
Applying Lemma \ref{lem:gauss-sum-bound}, we conclude that 
\begin{equation}\label{eq:bh}
|\cD_d(\m)|\leq \frac{1}{d^2} \sum_{h\mid (d,\m)} h^n {d'}^\frac{n}{2}
\sideset{}{^{*}}\sum_{\b' \bmod{d'}}
\sqrt{K_{d'}(2\M(\ma{b}');\0)},
\end{equation}
in the notation of \eqref{eq:2.1} and 
\eqref{eq:Mc}.
The following result provides a good upper bound for the inner sum, provided that $d'$ does not share a common prime factor with $\Delta_V$.

 \begin{lemma}
\label{lem:technical}
For any $\ve>0$ and $e\in \NN$ with $(e,\Delta_V)=1$, we have 
$$
\sideset{}{^{*}}\sum_{\b \bmod{e}}
K_{e}(2\M(\ma{b});\0)\ll e^{2+\ve}.
$$
\end{lemma}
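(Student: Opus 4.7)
The plan is to reduce the statement to prime powers and then exploit the rank condition on $\M(\b)$ that is enforced by the assumption $(e,\Delta_V) = 1$. As a first step, I would invoke the Chinese remainder theorem: $K_e(\cdot\,;\0)$ is multiplicative, and the starred sum factorises across coprime moduli (since $(\b,e_1 e_2)=1$ iff $(\b\bmod{e_i},e_i)=1$ for each $i$). This reduces the task to showing
$$
\sideset{}{^*}\sum_{\b \bmod{p^r}} K_{p^r}(2\M(\b);\0) \;\ll\; p^{r(2+\ve)}
$$
for every prime $p\nmid \Delta_V$. Because $2\mid \Delta_V$, such a $p$ is automatically odd, so the factor of $2$ is a unit and may be absorbed.

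Next, for a prime power $p^r$ and primitive $\b\bmod{p^r}$ lifted to $\tilde\b\in\ZZ^2$, I would compute $K_{p^r}(\M(\b);\0)$ via the Smith normal form of $\M(\tilde\b)$ over $\ZZ_p$. The essential input from \S\ref{geometry} is the rank bound \eqref{eq:rank}, which guarantees $\rank_{\FF_p}\M(\b) \geq n-1$ for every non-zero $\b$ modulo $p$. Consequently the $p$-adic invariant divisors $p^{e_1},\dots,p^{e_n}$ of $\M(\tilde\b)$ must satisfy $e_1=\cdots=e_{n-1}=0$ and $e_n = v_p(P(\tilde\b))$, where $P(\b) = \det\M(\b)$, yielding the clean formula
$$
K_{p^r}(2\M(\b);\0) \;=\; p^{\min(r,\, v_p(P(\tilde\b)))}.
$$
The right-hand side is independent of the lift once one adopts the convention $v_p(P(\b)) = r$ whenever $P(\b)\equiv 0 \bmod{p^r}$.

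With this closed form in hand, the final step is a counting argument driven by the separability of $P$ modulo $p$. Since $p\nmid \Delta_V$, the binary form $P(\b)$ has non-zero discriminant modulo $p$, so its $n$ roots in $\PP^1(\bar\FF_p)$ are simple. For a primitive $\b\bmod{p^r}$ to satisfy $v_p(P(\b))\geq 1$, its reduction modulo $p$ must lie at one of the at most $n$ roots defined over $\FF_p$; using the unique Hensel lift of each such root to $\ZZ_p$, a short direct count shows that the number of primitive $\b\bmod{p^r}$ with $v_p(P(\b)) = k$ is $O(p^{2r-k})$ for $1\leq k\leq r$. Summing then gives
$$
\sideset{}{^*}\sum_{\b\bmod{p^r}} K_{p^r}(2\M(\b);\0) \;\ll\; p^{2r} + \sum_{k=1}^{r} p^k\cdot n\, p^{2r-k} \;\ll\; nr\, p^{2r},
$$
which is $O(e^{2+\ve})$, as required.

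The hard part is the Smith normal form step: the rank bound $\rank_{\FF_p}\M(\b)\geq n-1$ is precisely what forces all but one invariant divisor to be a unit, collapsing the local count to the single exponential factor $p^{\min(r,v_p(P(\b)))}$. Without this geometric input, the Smith analysis would produce rank-$(n-1)$ contributions that vary with $\b$ and one would lose the clean polynomial-counting picture on which the final estimate rests.
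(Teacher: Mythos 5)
Your proof is correct and takes essentially the same route as the paper's: reduction to prime powers via the Chinese remainder theorem, the rank condition $\rank_{\FF_p}\M(\b)\geq n-1$ (valid mod $p$ for $p\nmid\Delta_V$) combined with the Smith normal form to control $K_{p^r}(2\M(\b);\0)$ in terms of $v_p(P(\b))$, and a count driven by the non-vanishing of $\disc(P)$ modulo $p$. The only difference is one of packaging: you obtain a single exact formula $p^{\min(r,\,v_p(P(\tilde\b)))}$ uniformly for all primitive $\b$, whereas the paper arrives at the same bound by combining two separate estimates (Lemma \ref{lem:smith} plus a Hensel step from $r=1$) and handles the rank-deficient locus $P(\b)=0$ with an additional count of primitive integer zeros of $P$ -- so yours is a mild streamlining of the same argument.
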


\begin{proof}
Let $g(e)$ denote the sum that is to be estimated
and put $U_{e}(\ma{b})=K_{e}(2\M(\ma{b});\0)$.
One notes via the
Chinese remainder theorem that
$g$ is a multiplicative arithmetic function which it will therefore
suffice to understand at prime powers $e=p^{r}$, with  $p\nmid \Delta_{V}$. 
We have 
$$
g(p^{r})=
\sum_{\substack{
0\leq b_{1},b_{2}<p^{r}\\
p\nmid \b}}
U_{p^{r}}(\ma{b}).
$$
Viewed as a matrix with coefficients in $\ZZ$, it follows from
\eqref{eq:rank} that $\M(\ma{b})$ has rank $n$ or $n-1$, and
furthermore $P(\b)=\det \M(\ma{b})$ has non-zero discriminant, as a polynomial in $\ma{b}$. 
For $i=0,1$ we write $\mathcal{B}_i$ for the set 
of $\ma{b}\in \ZZ^2$ with $0\leq b_1, b_2< p^{r}$ and $p\nmid \b$, for
which $\M(\b)$ has rank $n-i$ over $\ZZ$.

We will provide two upper bounds for $U_{p^r}(\ma{b})$. We begin with Lemma \ref{lem:smith}, which gives
\begin{equation}
  \label{eq:smith}
U_{p^r}(\ma{b})\leq p^{r(n-\rho)+\delta_p},
\end{equation}
where $\rho$ is the rank of $2\M(\ma{b})$ over $\ZZ$ and $\delta_p$ is the
minimum of the $p$-adic orders of the $\rho \times \rho$ non-singular
submatrices of $2\M(\ma{b})$. 
Our second estimate for 
$U_{p^r}(\ma{b})$ is based on an analysis of the case $r=1$. 
Since $p\nmid \Delta_{V}$ it follows that  $2\M(\ma{b})$ has rank $n$
or $n-1$ modulo $p$. In the former case 
one obtains $U_{p}(\ma{b})=1$ and in the latter case
$U_{p}(\ma{b})=p$. An application of Hensel's lemma therefore yields 
\begin{equation}\label{eq:Upr}
U_{p^r}(\b)
\leq 
\begin{cases}
1, & \mbox{if $p\nmid \Delta_{V}\det \M(\ma{b})$,}\\
p^r, & \mbox{if $p\nmid \Delta_{V}$ and $p\mid \det \M(\ma{b})$.}
\end{cases}
\end{equation}
Combining \eqref{eq:smith} and \eqref{eq:Upr} we deduce that
$$
U_{p^r}(\b)
\leq 
\begin{cases}
p^{\min\{r,v_{p}(P(\b))\}}, 
& \mbox{if $\b\in \mathcal{B}_{0}$,}\\
p^r, 
& \mbox{if $\b\in \mathcal{B}_{1}$.}
\end{cases}
$$

It therefore follows that 
$$
g(p^{r})
\leq 
\sum_{\b\in\mathcal{B}_0}
p^{\min\{r, v_p(P(\ma{b}))\}}+
p^{r}\#\mathcal{B}_1.
$$
Now it is clear that there are only $O(1)$ primitive integer solutions
of the equation $P(\b)=0$, whence 
$\#\mathcal{B}_1=O(p^{r})$.  
Moreover we  have 
$
v_p(P(\ma{b}))\leq \Delta$ with $\Delta=rn+O(1)$,
for any $\b\in \mathcal{B}_0$. 
Our investigation so far has shown that for $p\nmid \Delta_{V}$ we have 
$$
g(p^{r})\ll 
 p^{2r}+ \sum_{\ell= 0}^{\Delta} 
p^{\min\{\ell, r\}} \#\mathcal{B}_0(\ell),
$$
where $\mathcal{B}_0(\ell)$ is the set of 
$\b\in \mathcal{B}_0$ for which $p^\ell \mid P(\b)$.
If  $\ell\leq r$ then 
$$
\#\mathcal{B}_0(\ell)\ll p^{2(r-\ell)}
\#\{\b\bmod{p^\ell}: p\nmid \b, ~
P(\b)\equiv 0 \bmod{p^\ell}\}\ll p^{2r-\ell},
$$
since $p$ does not divide the discriminant of $P$.   Alternatively if $\ell>r$
then it follows that 
$$
\#\mathcal{B}_0(\ell)\ll p^{r}.
$$
Putting this altogether we conclude that 
$$
g(p^{r})
\ll p^{2r}+
 \sum_{0\leq \ell\leq r}
p^{2r} 
+ \sum_{r<\ell\leq \Delta} p^{2r}
\ll  r p^{2r},
$$
for $p\nmid \Delta_{V}$.  This suffices for the statement of the lemma.
\end{proof}

Applying Lemma \ref{lem:technical} in \eqref{eq:bh}, we conclude that 
\begin{align*}
\cD_{d}(\m)
&\ll d^{\frac{n}{2}+\ve} (d,\m)^{\frac{n}{2}-2},
\end{align*}
if $(d,\Delta_V)=1$.
If $d\mid \Delta_V^\infty$, we will merely take the trivial bound 
$$
|\cD_d(\m)|\leq \rho(d) \ll d^{n-2+\ve},
$$
which follows from Lemma \ref{rho(d)}.
Combining these therefore leads to the following result.

\begin{lemma}
\label{lem:r rough}
For any $\ve>0$ we have 
$\mathcal D_{d}(\ma{m})\ll 
(d,\Delta_V^\infty)^{\frac{n}{2}-2}
d^{\frac{n}{2}+\ve} (d,\m)^{\frac{n}{2}-2}$. 
\end{lemma}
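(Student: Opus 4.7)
The plan is to combine the two separate bounds that have already been established in the discussion preceding the lemma, via the multiplicativity relation of Lemma \ref{lem:mult2}. Factor $d = d_1 d_2$ with $d_1 = (d,\Delta_V^\infty)$ and $(d_2,\Delta_V)=1$; these are coprime, so $\cD_d(\m) = \cD_{d_1}(\m)\cD_{d_2}(\m)$, and it suffices to treat the two factors independently.

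For the $\Delta_V$-part, I would use only the trivial inequality $|\cD_{d_1}(\m)| \leq \rho(d_1)$ coming from replacing $e_{d_1}(\m.\k)$ by $1$, combined with the bound $\rho(d_1) \ll d_1^{n-2+\ve}$ supplied by Lemma \ref{rho(d)} (Hypothesis-$\rho$ holds since $V$ is non-singular). For the part coprime to $\Delta_V$, I would invoke the analysis carried out in the paragraphs above: detect the two defining congruences via orthogonality of additive characters, extract the greatest common divisor $h=(d_2,\b)$ from the outer parameter, split the $\k$-sum into cosets modulo $d_2' = d_2/h$ (which forces $h \mid \m$ and contributes a factor $h^n$), estimate the remaining exponential sum via Lemma \ref{lem:gauss-sum-bound} in terms of $K_{d_2'}(2\M(\b');\0)$, and finally average over the primitive $\b'$ using Lemma \ref{lem:technical}. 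This yields
$$
|\cD_{d_2}(\m)| \ll d_2^{\frac{n}{2}+\ve}(d_2,\m)^{\frac{n}{2}-2}.
$$

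Multiplying the two bounds and rearranging as
$$
d_1^{n-2+\ve}\cdot d_2^{\frac{n}{2}+\ve}(d_2,\m)^{\frac{n}{2}-2} = d_1^{\frac{n}{2}-2}\cdot d^{\frac{n}{2}+\ve}(d_2,\m)^{\frac{n}{2}-2},
$$
and using the trivial monotonicity $(d_2,\m) \leq (d,\m)$ together with $d_1 = (d,\Delta_V^\infty)$, gives precisely the asserted inequality.

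The only genuine obstacle on this path is the averaged control $\sum_{\b\bmod{e}}^{*} K_{e}(2\M(\b);\0) \ll e^{2+\ve}$ on the coprime-to-$\Delta_V$ side, but this is already supplied by Lemma \ref{lem:technical}, whose proof exploits the Smith normal form together with the two crucial facts that $\rank \M(\b) \geq n-1$ for every $[\b]\in \PP^1$ and that $P(\b) = \det \M(\b)$ has nonzero discriminant. With those inputs in hand, the present lemma is essentially a bookkeeping step.
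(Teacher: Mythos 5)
Your proposal reproduces the paper's argument exactly: factor $d=d_1d_2$ with $d_1=(d,\Delta_V^\infty)$ coprime to $d_2$, apply the trivial bound $|\cD_{d_1}(\m)|\leq\rho(d_1)\ll d_1^{n-2+\ve}$ on the $\Delta_V$-part, and on the coprime part use orthogonality, the extraction $\b=h\b'$ (forcing $h\mid\m$ and giving $\cD_{d_2}(\m;\b)=h^n\cD_{d_2/h}(\m/h;\b')$), Lemma \ref{lem:gauss-sum-bound} to bound the inner Gauss-type sum by $(d_2/h)^{n/2}\sqrt{K_{d_2/h}(2\M(\b');\0)}$, and Lemma \ref{lem:technical} (via Cauchy--Schwarz) to average over $\b'$. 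The recombination step is correctly carried out, so this is a correct proof taking the same route as the paper.
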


We are now ready to record some estimates for the average order of $|\cD_{d}(\m)|$,
as we range  over appropriate sets of moduli $d$.
Combining Lemma \ref{lem:r=1}  with Lemma \ref{lem:r>1} and the multiplicativity property in Lemma 
\ref{lem:mult2}, we are immediately led to the following conclusion. 

\begin{lemma}\label{lem:dave}
For any $\ve>0$ we have 
$$
\sum_{\substack{d\leq x\\ (d,\Delta_V G(\m))=1}} |\cD_d(\m)| \ll x^{\frac{n}{2}+\ve}.
$$
\end{lemma}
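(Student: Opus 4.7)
The plan is to exploit multiplicativity together with the vanishing statement of Lemma~\ref{lem:r>1} to reduce the sum to squarefree moduli, and then to bound each $\cD_p(\m)$ using the first case of Lemma~\ref{lem:r=1}.

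First, by Lemma~\ref{lem:mult2} the function $d\mapsto \cD_d(\m)$ is multiplicative, so
$$
\cD_d(\m)=\prod_{p^r\|d}\cD_{p^r}(\m).
$$
Under the hypothesis $(d,\Delta_V G(\m))=1$, every prime $p\mid d$ satisfies $p\nmid \Delta_V G(\m)$. If some prime $p\mid d$ appears with exponent $r\geq 2$, then Lemma~\ref{lem:r>1} gives $\cD_{p^r}(\m)=0$, and hence $\cD_d(\m)=0$. Therefore the sum is effectively supported on \emph{squarefree} $d$ coprime to $\Delta_V G(\m)$.

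Second, for each such squarefree $d$ and any prime $p\mid d$ we have $p\nmid G(\m)$. Since $G\in\ZZ[x_1,\dots,x_n]$ is homogeneous of degree $4(n-2)$, this also forces $p\nmid \m$ (otherwise $p^{4(n-2)}\mid G(\m)$). Consequently, the first case of Lemma~\ref{lem:r=1} applies at every prime $p\mid d$, yielding $|\cD_p(\m)|\leq C\, p^{(n-2)/2}$ for some absolute constant $C$. Multiplying over $p\mid d$ gives
$$
|\cD_d(\m)|\leq C^{\omega(d)}\, d^{\frac{n-2}{2}}\ll d^{\frac{n-2}{2}+\ve/2},
$$
since $C^{\omega(d)}\ll_\ve d^{\ve/2}$.

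Third, summing trivially,
$$
\sum_{\substack{d\leq x\\(d,\Delta_V G(\m))=1}}|\cD_d(\m)|\ll \sum_{d\leq x}d^{\frac{n-2}{2}+\ve/2}\ll x^{\frac{n}{2}+\ve}.
$$
There is no genuine obstacle in this argument; it is a direct assembly of the prime and prime-power results already established. The only point worth checking carefully is the implication $p\nmid G(\m)\Rightarrow p\nmid\m$, which follows from the homogeneity of $G$ and secures that the favorable first case of Lemma~\ref{lem:r=1} applies throughout the range of summation.
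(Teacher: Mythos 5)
Your proof is correct and matches the paper's approach exactly: the paper offers no written proof, simply remarking that the lemma follows by combining Lemma~\ref{lem:r=1}, Lemma~\ref{lem:r>1}, and the multiplicativity in Lemma~\ref{lem:mult2}, which is precisely what you have spelled out. One small redundancy: the implication $p\nmid G(\m)\Rightarrow p\nmid\m$ is true (by homogeneity of $G$) but unnecessary, since the first case of Lemma~\ref{lem:r=1} is triggered directly by the condition $p\nmid G(\m)$ alone.
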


Here
Lemma \ref{lem:r>1} ensures that only square-free values of $d$ are
counted in this sum. Furthermore this result is trivial if $G(\m)=0$,
in which case we will need an allied estimate. 
This is provided by the following result. 

\begin{lemma}\label{lem:dave'}
Assume that $G(\m)=0$. 
For any $\ve>0$ we have 
$$
\sum_{\substack{d\leq x\\ (d,\Delta_V\m  )=1}} |\cD_d(\m)| \ll 
x^{\frac{n+1}{2}+\ve}.
$$
\end{lemma}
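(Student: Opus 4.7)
The plan is to run the same multiplicativity/decomposition argument as in Lemma \ref{lem:dave}, but compensate for the fact that, when $G(\m)=0$, the vanishing statement of Lemma \ref{lem:r>1} is vacuous and one can no longer restrict to squarefree moduli. Each $d\in \NN$ factors uniquely as $d=uv$ with $u$ squarefree, $v$ squareful (every prime dividing $v$ does so to multiplicity $\geq 2$) and $(u,v)=1$; by Lemma \ref{lem:mult2} we have $|\cD_d(\m)|=|\cD_u(\m)|\cdot |\cD_v(\m)|$.

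For the squarefree part, since $(u,\Delta_V\m)=1$ and the hypothesis $G(\m)=0$ makes the condition $p\mid G(\m)$ trivially satisfied at every prime $p\mid u$, Lemma \ref{lem:r=1} yields $|\cD_p(\m)|\ll p^{\frac{n-1}{2}}$ with an absolute implied constant. Multiplying over the prime divisors of $u$ and absorbing $C^{\omega(u)}$ into $u^{\ve}$ gives
\[
|\cD_u(\m)|\ll u^{\frac{n-1}{2}+\ve}.
\]
For the squareful part, we have no improvement available, so we fall back on Lemma \ref{lem:r rough}: since $(v,\Delta_V\m)=1$, this reduces to $|\cD_v(\m)|\ll v^{\frac{n}{2}+\ve}$.

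Combining and summing,
\begin{align*}
\sum_{\substack{d\leq x\\ (d,\Delta_V\m)=1}} |\cD_d(\m)|
&\leq \sum_{\substack{v\leq x\\ v\text{ squareful}\\ (v,\Delta_V\m)=1}} |\cD_v(\m)| \sum_{\substack{u\leq x/v\\ u\text{ squarefree}}} u^{\frac{n-1}{2}+\ve}\\
&\ll \sum_{\substack{v\leq x\\ v\text{ squareful}}} v^{\frac{n}{2}+\ve}\, \bigl(x/v\bigr)^{\frac{n+1}{2}+\ve}
\ll x^{\frac{n+1}{2}+\ve}\sum_{\substack{v\leq x\\ v\text{ squareful}}} v^{-\frac{1}{2}+\ve}.
\end{align*}
The saving comes from the classical fact that the number of squareful integers up to $y$ is $O(y^{1/2})$, whence Abel summation yields $\sum_{v\leq x,\,v\text{ sqful}}v^{-\frac{1}{2}+\ve}\ll x^{\ve}$, completing the proof.

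The only real obstacle is the treatment of prime power moduli $p^r$ with $r\geq 2$: when $G(\m)=0$ one loses the dichotomy used in Lemma \ref{lem:dave} and is forced to use the weaker bound $|\cD_{p^r}(\m)|\ll p^{r(n/2+\ve)}$, which is larger than the desired $p^{r(n-1)/2}$. Luckily the sparsity of squareful integers exactly absorbs this loss, which is the reason one only obtains the bound $x^{(n+1)/2+\ve}$ rather than $x^{n/2+\ve}$ as in Lemma \ref{lem:dave}.
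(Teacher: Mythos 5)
Your proposal is correct and matches the paper's argument essentially line for line: both decompose $d=uv$ into squarefree and squarefull parts, both bound $\cD_u(\m)\ll u^{\frac{n-1}{2}+\ve}$ via Lemma \ref{lem:r=1} (using $G(\m)=0$ so the $p\mid G(\m)$, $p\nmid\m$ case of that lemma applies at every prime of $u$) and $\cD_v(\m)\ll v^{\frac{n}{2}+\ve}$ via Lemma \ref{lem:r rough}, and both close by exploiting the $O(V^{1/2})$ sparsity of squarefull integers. Your remark explaining why the loss of the dichotomy from Lemma \ref{lem:r>1} forces the weaker exponent $(n+1)/2$ is a fair summary of the trade-off.
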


\begin{proof}
We make the factorisation $d=uv$, where $u$ is the square-free part of $d$ and $v$ is
the square-full part.  In particular both $u$ and $v$ are assumed to be coprime to $\Delta_{V}$ and $\m$.
Then Lemma \ref{lem:r=1}   yields
$
\cD_u(\m) \ll u^{\frac{n-1}{2}+\ve},
$
and it follows from 
Lemma \ref{lem:r rough} that
$
\cD_v(\m) \ll v^{\frac{n}{2}+\ve}.
$
Hence 
\begin{align*}
\sum_{\substack{d\leq x\\ (d,\Delta_V\m  )=1}} |\cD_d(\m)| 
&\ll 
\sum_{\substack{uv\leq x}} u^{\frac{n-1}{2}+\ve}
 v^{\frac{n}{2}+\ve} \\
&\ll 
x^{\frac{n+1}{2}+\ve}
\sum_{\substack{v\leq x}} \frac{1}{v^{\frac{1}{2}}}.
\end{align*}
On noting that the number of square-full integers $v\leq V$ is 
$O(V^{\frac{1}{2}})$, this therefore concludes the proof of the lemma.
\end{proof}

\section{Analysis of $\cM_{d,q}(\m)$} 
\label{mcs}

It remains to estimate the mixed character sums $\cM_{d,q}(\m)$,
which it will suffice to analyse at prime powers.  
Our goal in this section will be a proof of the following result.

\begin{lemma}
\label{lem:mixed-strong'}
Assume that $q\mid d^{\infty}$ and $d\mid q^{\infty}$. Let $\ve>0$ and 
assume 
Hypothesis-$\rho$.
Then  we have 
$$
\cM_{d,q}(\m)\ll 
(d,(2\det \M_2)^\infty)^{\frac{n}{2}-2}
d^{\frac{n}{2}+\ve}q^{\frac{n}{2}+1}. 
$$
\end{lemma}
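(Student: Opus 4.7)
The plan is to reduce to prime-power moduli via Lemma~\ref{lem:mult2} and then open the quadratic congruences $Q_1(\k)\equiv Q_2(\k)\equiv 0 \bmod d$ by orthogonality of additive characters modulo $d$. Since $q\mid d^\infty$ and $d\mid q^\infty$, the hypothesis forces both moduli to be powers of the same prime, and I may assume $d=p^r$, $q=p^s$ with $r,s\geq 1$. Writing $\mathbb{1}[Q_i(\k)\equiv 0\bmod d]=d^{-1}\sum_{b_i\bmod d}e_d(b_iQ_i(\k))$ for $i=1,2$ yields
\[
\cM_{d,q}(\m) = \frac{1}{d^2}\sideset{}{^*}\sum_{a\bmod q}\,\sum_{b_1,b_2\bmod d}\,\sum_{\k\bmod dq} e_{dq}\bigl((a+qb_2)Q_2(\k) + qb_1 Q_1(\k) + \m\cdot\k\bigr),
\]
whose innermost sum is a complete quadratic exponential sum modulo $dq$ with underlying symmetric matrix $2\mathbf{N}$, where $\mathbf{N}=\M(qb_1,a+qb_2)$ in the notation of~\eqref{eq:Mc}.

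Next I would invoke Lemma~\ref{lem:gauss-sum-bound} to bound the $\k$-sum by $(dq)^{n/2}\sqrt{K_{dq}(2\mathbf{N};\0)}$, and then Lemma~\ref{lem:smith} to control $K_{dq}(2\mathbf{N};\0)$ through $v_p(\det 2\mathbf{N})=v_p(2^n P(qb_1,a+qb_2))$. Here $P(\b)=\det\M(\b)$ has non-zero discriminant (recorded in \S\ref{geometry}), so $2\mathbf{N}$ has full rank over $\ZZ$ for our parameters. Expanding $P$ around $(0,a)$ via the determinant gives
\[
P(qb_1,a+qb_2) \equiv a^n\det\M_2 \bmod q,
\]
so that $v_p(\det 2\mathbf{N})=0$ whenever $p\nmid 2\det\M_2$; in this case $K_{dq}(2\mathbf{N};\0)=1$ and summing over the $d^2\phi(q)$ parameter triples produces $|\cM_{d,q}(\m)|\ll \phi(q)(dq)^{n/2}\ll d^{n/2}q^{n/2+1}$, which matches the stated bound since $(d,(2\det\M_2)^\infty)=1$ in this regime.

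The main obstacle is the case $p\mid 2\det\M_2$, where the factor $(d,(2\det\M_2)^\infty)^{n/2-2}=d^{n/2-2}$ kicks in. I would bifurcate according to the size of $s$ relative to $v_p(2\det\M_2)$. When $s>v_p(2\det\M_2)$, the Taylor expansion above shows that the leading term $a^n\det\M_2$ dominates the higher-order corrections, so $v_p(\det 2\mathbf{N})=v_p(2\det\M_2)=O_{\M_2}(1)$ uniformly in $(a,b_1,b_2)$; the Gauss-sum estimate then gives $|\cM_{d,q}(\m)|\ll_{\M_2} d^{n/2}q^{n/2+1}\ll d^{n-2+\ve}q^{n/2+1}$, the last inequality valid since $n\geq 4$. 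When $s\leq v_p(2\det\M_2)$ the modulus $q$ is itself bounded in terms of $\M_2$, and the trivial estimate $|\cM_{d,q}(\m)|\leq\phi(q)\rho(d)q^n\ll_{\M_2} d^{n-2+\ve}$ supplied by Hypothesis-$\rho$ is already stronger than the target. The delicate point is to dovetail the two sub-estimates cleanly across the boundary $s\approx v_p(2\det\M_2)$, and to handle the prime $p=2$ where the extra factor of $2$ in $2\mathbf{N}$ shifts the Smith-normal-form analysis of Lemma~\ref{lem:smith} by a bounded amount.
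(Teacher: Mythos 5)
Your proof is correct but takes a genuinely different route from the paper. The paper (Lemmas~\ref{lem:mixed-strong-1} and \ref{lem:mixed-strong-2}) decomposes $\k\bmod p^{r+\ell}$ as $\k_0+p^r\x$ and treats the regimes $\ell>r$ and $\ell\leq r$ separately, in each case performing a two-stage reduction to a linear sum and then a quadratic Gauss sum before invoking Lemmas~\ref{lem:gauss-sum-bound} and \ref{lem:smith}. You instead detect $Q_i(\k)\equiv 0\bmod d$ by additive characters, collapsing the $\k$-sum into a single complete quadratic Gauss sum modulo $dq$ with matrix $\mathbf{N}=\M(qb_1,a+qb_2)$, and control $K_{dq}(2\mathbf{N};\0)$ uniformly in $(a,b_1,b_2)$ via the congruence $P(qb_1,a+qb_2)\equiv a^n\det\M_2\bmod q$. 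One small imprecision: the appeal to $\disc P\neq 0$ is not the relevant fact. What actually guarantees that $\mathbf{N}$ has full rank over $\ZZ$ and that $v_p(\det 2\mathbf{N})=O_{\M_2}(1)$ in the two regimes where you apply the Gauss-sum bound is precisely the Taylor computation you then carry out, namely $v_p(P(qb_1,a+qb_2)-a^n\det\M_2)\geq s$ with $p\nmid a$. Your closing worries are not genuine obstacles: the ranges $s>v_p(2\det\M_2)$ (Gauss-sum bound, bounded $K$) and $s\leq v_p(2\det\M_2)$ (trivial bound via Hypothesis-$\rho$, since $q\ll_{\M_2}1$) are disjoint and exhaustive, so no ``dovetailing'' is required, and the extra $2^n$ in $\det 2\mathbf{N}$ at $p=2$ is a fixed shift absorbed into the implied constant, which may depend on $Q_1,Q_2$. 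Your method is cleaner for this particular lemma, but it forgoes what the paper's lemmas additionally provide: the vanishing criteria ($\cM_{p^r,p^\ell}(\m)=0$ unless $p^{\min(r,\ell)}\mid Q_2^*(\m)$ or $p\mid 2\det\M_2$) and the sharper $\ell>r$ bound without the factor $(d,(2\det\M_2)^\infty)^{n/2-2}$, both of which are used in \S\ref{pt1} (e.g.\ to force $\delta_{22}=1$); so it proves Lemma~\ref{lem:mixed-strong'} but cannot substitute for Lemmas~\ref{lem:mixed-strong-1}--\ref{lem:mixed-strong-2} in their entirety.
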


Our proof of this result is based on an analysis of the sum  
\begin{align*}
\mathcal M_{p^r,p^{\ell}}(\m)=\sideset{}{^{*}}\sum_{a \bmod{p^{\ell}}}
\sum_{\substack{\mathbf k \bmod{p^{r+\ell}}\\
Q_1(\k)\equiv 0\bmod{p^r}\\Q_2(\k)\equiv 0\bmod{p^r}}}
e_{p^{r+\ell}}\left(aQ_2(\k)+\m.\k\right),
\end{align*}
for integers $r, \ell\geq 1$.
We first split the inner sum by replacing $\k$ by $\k+p^r\x$, where $\k$ runs modulo $p^r$ and $\x$ runs modulo $p^{\ell}$. This yields
\begin{align}
\label{eq:mixed-step1}
\cM_{p^r,p^{\ell}}(\m)=
\sum_{\substack{\mathbf k \bmod{p^{r}}\\
Q_1(\k)\equiv 0\bmod{p^r}\\Q_2(\k)\equiv 0\bmod{p^r}}}
S(\k),
\end{align}
where
$$
S(\k)=
\sideset{}{^{*}}\sum_{a \bmod{p^{\ell}}}
e_{p^{r+\ell}}\left(aQ_2(\k)+\m.\k\right)
\sum_{\x \bmod{p^{\ell}}}
e_{p^{\ell}}\left(aQ_2(\x)p^r+a\nabla Q_2(\k).\x+\m.\x\right).
$$
We will argue differently according to which of $r$ or $\ell$ is largest. 
Recall that $Q^*_2$ is the dual of $Q_2$, with matrix $\M_2^*=(\det \M_2)\M_2^{-1}$. 
Lemma~\ref{lem:mixed-strong'} is a straightforward consequence of the following pair of results and the multiplicativity property in Lemma \ref{lem:mult2}.

\begin{lemma}
\label{lem:mixed-strong-1}
Suppose that $\ell>r$. Then 
$\cM_{p^r,p^{\ell}}(\m)=0$ unless $p^{r}\mid Q^*_2(\m)$
or $p\mid 2\det \M_2$, in which case 
$\cM_{p^r,p^{\ell}}(\m)\ll p^{\ell+\frac{n}{2}(\ell+r)}$.
\end{lemma}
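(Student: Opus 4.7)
The plan is to analyse the inner sum over $\x$ appearing in $S(\k)$ by exploiting the hypothesis $\ell > r$ via the substitution $\x = \y + p^{\ell-r}\z$, with $\y$ running modulo $p^{\ell-r}$ and $\z$ modulo $p^r$. A Taylor expansion gives
\[
p^r a Q_2(\y + p^{\ell-r}\z) = p^r a Q_2(\y) + p^\ell a \nabla Q_2(\y).\z + p^{2\ell - r} a Q_2(\z) \equiv p^r a Q_2(\y) \bmod p^\ell,
\]
since $\ell \geq r$. The linear term decouples as $\b.\y + p^{\ell-r}\b.\z$, where $\b = a\nabla Q_2(\k) + \m$, so the $\x$-sum factorises as a product of a $\y$-sum and a $\z$-sum.

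The $\z$-sum $\sum_{\z \bmod p^r} e_{p^r}(\b.\z)$ equals $p^{rn}$ when $p^r$ divides every component of $\b$ and vanishes otherwise. This proves the vanishing half of the lemma directly: the $\x$-sum is zero unless $a\nabla Q_2(\k) \equiv -\m \bmod p^r$. In the non-vanishing case, writing $\b = p^r \b'$, the remaining $\y$-sum becomes the standard quadratic Gauss sum $\sum_{\y \bmod p^{\ell-r}} e_{p^{\ell-r}}(a Q_2(\y) + \b'.\y)$, which Lemma \ref{lem:gauss-sum-bound} bounds by $O(p^{n(\ell-r)/2})$; the implied constant is controlled via Lemma \ref{lem:smith} using the non-singularity of $\M_2$. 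Thus the $\x$-sum has absolute value $O(p^{n(\ell+r)/2})$ whenever non-zero.

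To conclude, I would count admissible pairs $(a, \k)$. For each unit $a \bmod p^\ell$, the congruence $2a\M_2\k \equiv -\m \bmod p^r$ has at most $K_{p^r}(2a\M_2; -\m) = O(1)$ solutions by Lemma \ref{lem:smith}, since $\M_2$ has full rank. Summing over the $\phi(p^\ell) \leq p^\ell$ residues of $a$ produces $O(p^\ell)$ contributing pairs, and multiplying by the $\x$-sum bound yields $|\cM_{p^r, p^\ell}(\m)| \ll p^{\ell + n(\ell+r)/2}$. Finally, if $p \nmid 2\det \M_2$ then $\M_2$ is invertible modulo $p^r$, so each unit $a$ determines $\k \equiv -(2a)^{-1}\M_2^{-1}\m \bmod p^r$ uniquely, and a direct computation gives $Q_2(\k) \equiv Q_2^*(\m)/(4 a^2 \det \M_2) \bmod p^r$. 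Hence the constraint $Q_2(\k) \equiv 0 \bmod p^r$ forces $p^r \mid Q_2^*(\m)$, completing the dichotomy. The main point of care is verifying that the $O(1)$ constants from Lemma \ref{lem:smith} remain independent of $r$ and $\ell$ when $p \mid 2\det \M_2$; this follows from the bound $\delta_p \leq v_p(\det(2\M_2))$, which depends only on $Q_2$ and so may be absorbed into the implied constant.
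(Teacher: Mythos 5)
Your proposal is correct and follows essentially the same route as the paper: the substitution $\x=\y+p^{\ell-r}\z$, the observation that the $\z$-sum forces $a\nabla Q_2(\k)+\m\equiv\0\bmod{p^r}$, the Gauss-sum bound for the $\y$-sum via Lemma \ref{lem:gauss-sum-bound} together with Lemma \ref{lem:smith}, and finally counting admissible pairs $(a,\k)$ via $K_{p^r}(2a\M_2;-\m)=O(1)$ summed over $\phi(p^\ell)$ units. The only cosmetic difference is that you spell out the identity $Q_2(\k)\equiv Q_2^*(\m)/(4a^2\det\M_2)\bmod{p^r}$ to derive the vanishing dichotomy, which the paper leaves implicit after noting $p^r\mid Q_2(\k)$.
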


\begin{proof}
In the inner sum of $S(\k)$ 
we take $\x=\y+p^{\ell-r}\z$, where $\y$ runs modulo $p^{\ell-r}$ and
$\z$ runs modulo $p^r$. This gives 
$$
\sum_{\y \bmod{p^{\ell-r}}}
e_{p^{\ell}}\left(aQ_2(\y)p^r+a\nabla Q_2(\k).\y+ \m.\y\right)\sum_{\z \bmod{p^r}}
e_{p^{r}}\left(a\nabla Q_2(\k).\z+\m.\z\right),
$$
for the sum over
$\x \bmod{p^{\ell}}$.
The sum over $\z$ vanishes unless 
\begin{equation}\label{m:7}
a\nabla Q_2(\k)+\m \equiv \0 \bmod{p^r}.
\end{equation}
Recall from the conditions of summation in \eqref{eq:mixed-step1}
that $p^r\mid Q_2(\k)$.
In particular, if $p\nmid 2\det \M_2$, 
then it follows that 
$\cM_{p^r,p^{\ell}}(\m)=0$ unless $p^{r}\mid Q^*_2(\m)$, as required for the first part of the lemma. For the second part, we let 
$\v\in \ZZ^n$ be such that 
$a\nabla Q_2(\k)+\m =p^r\v$. 
Then we have 
$$
S(\k)=p^{nr} 
\sum_{a \in A(\k)}
e_{p^{r+\ell}}\left(aQ_2(\k)+\m.\k\right)
\sum_{\y \bmod{p^{\ell-r}}}
e_{p^{\ell-r}}\left(aQ_2(\y)+\v.\y\right),
$$
where 
$A(\k)$ denotes the set of $a\in(\ZZ/p^\ell\ZZ)^*$ such that \eqref{m:7} holds.
Applying Lemma \ref{lem:gauss-sum-bound} and then Lemma \ref{lem:smith} 
we conclude that
$$
|S(\k)|\leq\sum_{a \in A(\k)} p^{nr+\frac{n}{2}(\ell-r)} \sqrt{K_{p^{\ell-r}}(2\M_2;\0)}\ll
\sum_{a \in A(\k)}
 p^{nr+\frac{n}{2}(\ell-r)}.
$$
Inserting this into \eqref{eq:mixed-step1} therefore gives
$$
\cM_{p^r,p^{\ell}}(\m)
\ll
p^{\frac{n}{2}(\ell+r)}\sideset{}{^{*}}\sum_{a \bmod{p^{\ell}}}
K_{p^r}(2a\M_2; -\m).
$$
A further application of Lemma \ref{lem:smith} therefore gives
the bound in the lemma.
\end{proof}

\begin{lemma}
\label{lem:mixed-strong-2}
Suppose that $\ell\leq r$ and assume 
Hypothesis-$\rho$.
Then 
$\cM_{p^r,p^{\ell}}(\m)=0$ unless $p^{\ell}\mid Q^*_2(\m)$
or $p\mid 2\det \M_2$, in which case 
$\cM_{p^r,p^{\ell}}(\m)\ll p^{\ell+\frac{n}{2}(\ell+r)} (p,2\det \M_2)^{\frac{nr }{2}-2+\ve}$.
\end{lemma}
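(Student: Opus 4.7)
The plan is to start from the identity \eqref{eq:mixed-step1}. Since $\ell\leq r$, the factor $e_{p^\ell}(aQ_2(\x)p^r)$ is trivial, so the inner $\x$-sum vanishes unless $a\nabla Q_2(\k)+\m\equiv \0\bmod{p^\ell}$ and contributes $p^{n\ell}$ otherwise. Writing $V=\{\k\bmod{p^r}:p^r\mid Q_i(\k),\,i=1,2\}$, this gives
\[
\cM_{p^r,p^\ell}(\m)
= p^{n\ell}\sideset{}{^{*}}\sum_{a\bmod{p^\ell}}
\sum_{\substack{\k\in V\\ 2a\M_2\k\equiv -\m\bmod{p^\ell}}}
e_{p^{r+\ell}}\bigl(aQ_2(\k)+\m.\k\bigr).
\]
I will analyse this according to whether $p\nmid 2\det\M_2$ (Case 1) or $p\mid 2\det\M_2$ (Case 2).

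In Case 1, $\M_2$ is invertible modulo $p^\ell$, so the linear condition forces $\k\equiv -(2a)^{-1}\M_2^{-1}\m\bmod{p^\ell}$, and substituting into $Q_2(\k)=\k^T\M_2\k$ combined with $p^\ell\mid Q_2(\k)$ yields
\[
0\equiv Q_2(\k)\equiv (4a^2)^{-1}\m^T\M_2^{-1}\m=\frac{Q_2^*(\m)}{4a^2\det\M_2}\bmod{p^\ell},
\]
forcing $p^\ell\mid Q_2^*(\m)$ and proving the vanishing statement. For the quantitative bound I return to $\cM_{p^r,p^\ell}(\m)=\sideset{}{^*}\sum_a\sum_{\k\bmod{p^{r+\ell}},\,p^r\mid Q_i(\k)}e_{p^{r+\ell}}(aQ_2(\k)+\m.\k)$ and detect the congruences $p^r\mid Q_i(\k)$ by character orthogonality, introducing dummies $c_1,c_2\bmod{p^r}$. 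Combining exponentials, the inner sum becomes a quadratic Gauss sum $\sum_{\k\bmod{p^{r+\ell}}}e_{p^{r+\ell}}(F(\k))$ with matrix $\M'=(a+p^\ell c_2)\M_2+p^\ell c_1\M_1$. Since $\M'\equiv a\M_2\bmod{p}$ has determinant of $p$-adic valuation zero in Case 1, Lemma~\ref{lem:smith} gives $K_{p^{r+\ell}}(2\M';\0)=O(1)$ uniformly in $c_1,c_2,a$, and Lemma~\ref{lem:gauss-sum-bound} yields a bound $p^{n(r+\ell)/2}$. The $p^{-2r}$ factor from orthogonality is exactly cancelled by the $p^{2r}$ choices of $(c_1,c_2)$, and the $a$-sum contributes $\phi(p^\ell)\leq p^\ell$, giving $\cM_{p^r,p^\ell}(\m)\ll p^{\ell+n(\ell+r)/2}$.

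In Case 2 I proceed by direct counting. For each $a\in(\ZZ/p^\ell\ZZ)^*$, the $\k\bmod{p^r}$ satisfying $2a\M_2\k\equiv -\m\bmod{p^\ell}$ form (if non-empty) a union of cosets of $p^\ell\ZZ^n/p^r\ZZ^n$ of size at most $K_{p^\ell}(2a\M_2;\0)\cdot p^{(r-\ell)n}$; Lemma~\ref{lem:smith} then gives $K_{p^\ell}(2a\M_2;\0)\leq p^{v_p(\det 2a\M_2)}=O_{p,\M_2}(1)$. Taking absolute values in the displayed expression above yields $\cM_{p^r,p^\ell}(\m)\ll p^{n\ell}\phi(p^\ell)\cdot p^{(r-\ell)n}\ll p^{\ell+rn}$, and comparison with the target $p^{\ell+n(\ell+r)/2+nr/2-2+\ve}$ (using $(p,2\det\M_2)=p$) reduces to the elementary inequality $n\ell\geq 4-2\ve$, which is satisfied for $n\geq 4$ and $\ell\geq 1$.

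The main technical obstacle is the uniformity of the Gauss sum bound in Case 1 across all $(c_1,c_2,a)$; this rests on the fortunate observation that $\M'\equiv a\M_2\bmod{p}$ is independent of $c_1,c_2$, rendering the determinant condition needed for Lemma~\ref{lem:smith} automatic. Hypothesis-$\rho$ plays no role in the core argument but is retained for consistency with the parent Lemma~\ref{lem:mixed-strong'}.
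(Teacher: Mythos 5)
Your proof is correct and takes a genuinely different route from the paper's. The paper does not split on whether $p\mid 2\det\M_2$: it works from the vanishing condition $a\nabla Q_2(\k)+\m\equiv\0\bmod{p^\ell}$, detects $p^r\mid Q_i(\k)$ with characters modulo $p^{r-\ell}$, substitutes $a\k=\x+p^\ell\y$, and then distinguishes $\ell>\delta$ from $\ell\leq\delta$ where $\delta=v_p(2^n\det\M_2)$, invoking Hypothesis-$\rho$ only in the latter case. Your Case~1 is a clean shortcut: after detecting $p^r\mid Q_i(\k)$ with characters modulo $p^r$, the Gauss-sum matrices $\M'=(a+p^\ell c_2)\M_2+p^\ell c_1\M_1$ all reduce to $a\M_2$ modulo $p$, whose determinant is a $p$-adic unit, so Lemma~\ref{lem:smith} makes $K_{p^{r+\ell}}(2\M';\0)=1$ uniformly and one hits $p^{\ell+\frac{n}{2}(\ell+r)}$ exactly. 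Your Case~2 is a direct count that avoids Gauss sums and Hypothesis-$\rho$ entirely. One thing worth flagging: your Case~2 bound $p^{\ell+rn}$ dominates the stated bound only because of the slack $\frac{n\ell}{2}-2+\ve\geq 0$, whereas the paper's internal argument achieves the sharper $p^{\ell+\frac{n}{2}(\ell+r)}$ when $\ell>\delta$, and it is this sharper estimate---not the one printed in the lemma---that is actually needed to deduce Lemma~\ref{lem:mixed-strong'} by multiplicativity when $r\geq 2$ at a prime $p\mid 2\det\M_2$; both your bound and the bound of Lemma~\ref{lem:mixed-strong-2} as stated are short of that target by a factor $p^{(r-1)(2-\ve)}$. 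This is a wrinkle in the phrasing of the lemma in the paper rather than a defect in your argument: your proof of the lemma as written is entirely valid.
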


\begin{proof}
The expression in \eqref{eq:mixed-step1} now features
$$
S(\k)=
\sideset{}{^{*}}\sum_{a\bmod{p^{\ell}}}e_{p^{r+\ell}}\left(aQ_2(\k)+\m.\k\right)\sum_{\x \bmod{p^{\ell}}}
e_{p^{\ell}}\left(a\nabla Q_2(\k).\x+\m.\x\right).
$$
The sum over $\x$ vanishes unless 
\begin{equation}\label{m:7'}
a\nabla Q_2(\k)+\m \equiv \0 \bmod{p^\ell}.
\end{equation}
Recall that $p^r\mid Q_2(\k)$ in \eqref{eq:mixed-step1}, which implies that
$p^\ell\mid Q_2(\k)$  since $r\geq \ell$. 
If  $p\nmid 2\det \M_2$, it follows from \eqref{m:7'}  that  
$$
a\k\equiv -\overline{2\det \M_2}\M_2^{*}\m \bmod{p^\ell},
$$
whence 
$p^{\ell}\mid Q_{2}^{*}(\m)$, as required for the first part of the lemma.
For the second part we deduce that 
$$
S(\k)=p^{n\ell}
\sideset{}{^{*}}\sum_{\substack{a\bmod{p^{\ell}}\\
\scriptsize{\mbox{\eqref{m:7'} holds}}}}
e_{p^{r+\ell}}\left(aQ_2(\k)+\m.\k\right).
$$
Re-introducing the sum over $\k$ and using exponential sums to
detect the divisibility constraints $p^{r-\ell}\mid p^{-\ell}Q_i(a\k)$, 
which are clearly equivalent to $p^{r-\ell}\mid p^{-\ell}Q_i(\k)$
when $a$ is coprime to $p$, 
we deduce that
\begin{equation}\label{m:8}
\cM_{p^r,p^\ell}(\m)=
\frac{p^{n\ell}}{p^{2(r-\ell)}}
\sum_{\b \bmod{p^{r-\ell}}}
T(\b),
\end{equation}
where
$$
T(\b)=
\sideset{}{^{*}}\sum_{\substack{a\bmod{p^{\ell}}}}
\sum_{\substack{
\k \in K}}
e_{p^{r+\ell}}\left(aQ_2(\k)+\m.\k\right)
e_{p^{r}}\left(b_1Q_1(a\k)+b_2Q_2(a\k)\right),
$$
and 
 $K$ denotes the set of $\k \bmod{p^r}$ for which 
\eqref{m:7'} holds and 
$Q_i(\k)\equiv 0
\bmod{p^{\ell}}$, for $i=1,2$.

We proceed by writing $a\k=\x+p^\ell \y$, for $\y$ modulo $p^{r-\ell}$. 
Let $\bar{a}$ denote the multiplicative inverse of $a$ modulo $p^\ell$, which lifts to a unique point modulo $p^{r+\ell}$.
This leads to the expression
$$
T(\b)=
\sideset{}{^{*}}\sum_{\substack{a\bmod{p^{\ell}}}}
\sum_{\substack{
\x \bmod{p^\ell} \\
\bar{a}\nabla Q_2(\x)+\m \equiv \0 \bmod{p^\ell}\\
Q_i(\x)\equiv 0\bmod{p^\ell}
}}
\sum_{\substack{
\y \bmod{p^{r-\ell}}}}
f(\x,\y),
$$
for $i=1,2$, 
with 
\begin{align*}
f(\x,\y)
&=
e_{p^{r+\ell}}\left(\bar{a}Q_2(\x+p^\ell \y)+\m.(\x+p^\ell \y)\right)
e_{p^{r}}\left(b_1Q_1(\x+p^\ell \y)+b_2Q_2(\x+p^\ell \y)\right).
\end{align*}
Recall the notation $\M(\b)$ introduced in \eqref{eq:Mc}.
One concludes that 
$$
\left|
\sum_{\substack{
\y \bmod{p^{r-\ell}}}}
f(\x,\y)\right|\leq 
\left|\sum_{\substack{
\y \bmod{p^{r-\ell}}}}
e_{p^{r-\ell}}\left(Q(\y)+\ma{n}.\y\right)
\right|,
$$
with 
$\ma{n}=p^{-\ell}(\bar{a}\nabla Q_2(\x)+\m)+2\M(\b)\x$ and 
$$
Q(\y)=\bar{a}Q_2(\y)+p^\ell \left(b_1Q_1(\y)+b_2Q_2(\y)\right).
$$
This quadratic form has underlying matrix $\M(p^\ell b_1,p^\ell b_2+\bar{a})$. The number of $\x \bmod{p^\ell}$ appearing in our expression for $T(\b)$ is $O(1)$ by Lemma \ref{lem:smith}.
Applying Lemma  \ref{lem:gauss-sum-bound}, we deduce that 
$$
T(\b)\ll p^{\frac{(r-\ell)n}{2}}
\sideset{}{^{*}}\sum_{\substack{a\bmod{p^{\ell}}}}
\sqrt{K_{p^{r-\ell}}(2\M
(p^\ell b_1,p^\ell b_2+\bar{a}); \0)}.
$$
As $b_2$ runs modulo $p^{r-\ell}$ and $a$ runs over elements modulo $p^\ell$ which are coprime to $p$, so $c_2=p^\ell b_2+\bar{a}$ runs over a complete set of residue classes modulo $p^r$.  Replacing $b_1$ by $b_1c_2$, and recalling 
 \eqref{m:8}, we obtain
\begin{align*}
\cM_{p^r,p^\ell}(\m)
&\ll 
\frac{p^{\frac{n}{2}(\ell+r)}}{p^{2(r-\ell)}}
\sum_{b_1 \bmod{p^{r-\ell}}}
~\sideset{}{^{*}}\sum_{\substack{c_2\bmod{p^{r}}}}
\sqrt{K_{p^{r-\ell}}(2\M
(p^\ell b_1c_2,c_2); \0)}\\
&\ll 
\frac{p^{\ell+\frac{n}{2}(\ell+r)}}{p^{r-\ell}}
\sum_{b_1 \bmod{p^{r-\ell}}}
\sqrt{K_{p^{r-\ell}}(2\M
(p^\ell b_1,1); \0)}.
\end{align*}
It will be convenient to put $\delta=v_p(2^n \det \M_2)$.
We may assume that $\ell>\delta$. Indeed, if $\ell\leq \delta$ then we
may take the trivial bound $S(\k)=O(1)$ in 
\eqref{eq:mixed-step1}. Applying Hypothesis-$\rho$
we go on to deduce that 
$\cM_{p^r,p^{\ell}}(\m)=O(p^{r(n-2)+\ve})$, which is satisfactory.

Using Taylor's formula we may write 
\begin{align*}
\det 2\M
(p^\ell b_1,1)
&=p^\ell f(b_1)+\det 2\M(0,1)\\
&=
p^\ell f(b_1)+2^n\det \M_2,
\end{align*}
for an appropriate polynomial  $f(b_1)$ with integer coefficients. 
Viewing $b_1$ as an element of $\ZZ$,
it follows that $p^\ell f(b_1)+2^n\det \M_2\neq 0$, since $\ell>\delta$.
Hence 
$$
v_p\left(\det 2\M(p^\ell b_1,1)\right)= \delta
$$
and  Lemma \ref{lem:smith}  yields
$K_{p^{r-\ell}}(2\M
(p^\ell b_1,1); \0)\ll 1$. The overall contribution to 
$\cM_{p^r,p^\ell}(\m)$ from this case is therefore $O(p^{\ell+\frac{n}{2}(\ell+r)})$, which is satisfactory. 
\end{proof}

\section{Proof of Theorem \ref{th1}: initial steps}
\label{pt1}

We henceforth assume that $n\geq 5$.
From Lemma~\ref{psum} we have
$$ 
S_{T,\a}^\sharp(B)=\left(1
+O_N(B^{-N})\right)
\frac{B^{n-2}}{4^n}\sum_{\m\in \mathbb
  Z^n}
    \sum_{\substack{d=1\\ (d,\Delta_V^\infty)\leq \Xi}}^\infty \frac{\chi(d)}{d^{n-1}}
  \sum_{q=1}^\infty\frac{1}{q^n} 
T_{d,q}(\m)I_{d,q}(\m),
$$
for any $N>0$.
We expect that the main term of the sum comes from the zero
frequency $\m=\textbf 0$. This we will compute explicitly in \S \ref{s:conclusion}
and it will turn out to have size $B^{n-2}$, as
expected. Our immediate task, however, is to produce a satisfactory
upper bound for the contribution from the non-zero
frequencies.  In view of the properties of $I_{d,q}(\m)$ recorded in \S \ref{prelim} 
 the sums over $d$ and
$q$ are effectively restricted to $d\ll B$ and
$q\ll Q$, respectively. 
Moreover, Lemma \ref{ubI_q}
implies that the contribution of
the tail $|\m|>dQB^{-1+\varepsilon}$ is arbitrarily small. 
Finally, Lemma \ref{rho(d)} confirms Hypothesis-$\rho$ for the quadratic forms considered here.

As reflected in the various estimates collected together in \S\S
\ref{sec:qsum}--\ref{mcs}, the behaviour of the exponential sum
$T_{d,q}(\m)$ will depend intimately on $\m$. We must
therefore give some thought to the question of controlling the number
of $\m\in \ZZ^n$ which are constrained in
appropriate ways.  
The constraints that feature in our work are of three basic sorts:
either $Q_2^*(\m)=0$ or $G(\m)=0$ or 
$(-1)^{\frac{n-1}{2}}Q_2^*(\m)=\square$, the latter case only being
distinct from the first case when $n$ is odd. 
The first two cases correspond to 
averaging $\m$  over rational points $[\m]$ belonging to a
projective variety $W\subset \PP^{n-1}$, with $W$ equal to the quadric 
$Q_2^*=0$ or the dual hypersurface $V^*$, respectively. 
For such $W$ we claim that 
\begin{equation}
  \label{eq:count}
\#\left\{ \m \in \ZZ^n:~ [\m]\in W(\QQ), ~|\m|\leq M\right\}
\ll M^{n-2+\ve},
\end{equation}
for any $M\geq 1$ and $\ve>0$. 
When $W$ is the quadric, in which case we recall that $Q_2^*$ is
non-singular, this follows from Lemma \ref{m:5}.
When $W=V^*$ then  our discussion in \S \ref{geometry} shows that 
$W$ is an irreducible hypersurface of degree
$4(n-2)\geq 12$. Hence the desired bound follows directly from joint work of the first
author with Heath-Brown and Salberger \cite[Corollary 2]{bhbs}.
Finally, we note that 
\begin{equation}
  \label{eq:count'}
\#\left\{ \m \in \ZZ^n:~ (-1)^{\frac{n-1}{2}}Q_2^*(\m)=\square, ~|\m|\leq M\right\}
\ll M^{n-1+\ve},
\end{equation}
for any $M\geq 1$ and $\ve>0$.  Indeed, the contribution from $\m$ for
which $Q_2^*(\m)=0$ is satisfactory by \eqref{eq:count} and the
remaining contribution leads us to count points of height $O(M)$ on 
a non-singular quadric in $n+1$ variables, for which we may appeal to 
Lemma \ref{m:5}.

We may now return to the task of 
estimating the contribution to 
$ 
S_{T,\a}^\sharp(B)$ 
from $\m$ for which 
$0<|\m|\leq dQB^{-1+\varepsilon}=\sqrt{d}B^\ve$.  In this endeavour it 
 will suffice to study the expression
\begin{equation} 
   \label{eq:AAsum}
U_{T,\a}(B,D)=B^{n-2}\sum_{0<|\m|\leq\sqrt{D}B^{\varepsilon}}
\sideset{}{'}\sum_{\substack{d\sim D\\
(d,\Delta_V^\infty)\leq \Xi}}
\frac{1}{d^{n-1}}\left|\sum_{q}\frac{1}{q^n}
T_{d,q}(\m)I_{d,q}(\m)\right|,
\end{equation}
for $D\geq 1$,  
where $\sum'$ indicates that the sum should be taken over odd integers only
and  the notation $d\sim D$  means $D/2<d\leq D$.
In our analysis of this sum we will clearly only be interested in
values of $D\ll B$.  However, for the time being we allow $D\geq 1$ to be
an arbitrary parameter.

Recall the definition \eqref{eq:NM} of the non-zero integer $N$.
We split $q$ as 
$\delta q$ with $(q,dN)=1$ and $\delta\mid (dN)^{\infty}$.
Since $q$ is restricted to have size $O(Q)$ in \eqref{eq:AAsum}, by the properties of $I_{d,q}(\m)$ recorded in \S \ref{prelim}, we may assume that $\delta\ll B$.
We deduce
from 
the multiplicativity relations Lemma \ref{lem:mult1} and Lemma \ref{lem:mult2} 
that
$$
U_{T,\a}(B,D)
\leq B^{n-2}
\hspace{-0.4cm}
\sum_{0<|\m|\leq\sqrt{D}B^{\varepsilon}}
\sideset{}{'}\sum_{\substack{d\sim D\\
(d,\Delta_V^\infty)\leq \Xi}}
\frac{1}{d^{n-1}}\sum_{\substack{\delta\mid (dN)^{\infty}
\\\delta\ll B}}
\frac{|T_{d,\delta}(\m)|}{\delta^n}\left|
\sum_{\substack{q 
    \\ (q,dN)=1}}\frac{1}{q^n}
\cQ_{q}(\m)I_{d,\delta q}(\m)\right|. 
$$
To estimate the inner sum over $q$ 
we see via partial summation that it is 
\begin{align*}
-\int_{1}^{\infty}\left(\sum_{\substack{q\leq y\\ (q,dN)=1}}
\cQ_{q}(\m)\right)
\frac{\partial}{\partial y}\left(\frac{I_{d,\delta
y}(\m)}{y^n}\right)\d y. 
\end{align*}
The integral is over $y \leq c Q/\delta$, for some absolute constant $c>0$.
Define the quantities
$$
\theta_{1}(n;\m)=\begin{cases}
\frac{7}{16}, & \mbox{if $2\nmid n$ and $(-1)^{\frac{n-1}{2}}Q_{2}^{*}(\m)\neq \square$,}\\
0, &\mbox{otherwise,}
\end{cases}
$$
and 
$$
\theta_2(n;\m)=\begin{cases}
1, &\mbox{
if  $Q_{2}^{*}(\m)=0$ and $(-1)^{\frac{n}{2}}\det\M_{2}=\square$,}\\
\frac{1}{2}, &\mbox{
if  $Q_{2}^{*}(\m)=0$ and $(-1)^{\frac{n}{2}}\det \M_{2}\neq\square$,}\\
\frac{1}{2}, &\mbox{
 if  $Q_{2}^{*}(\m)\neq 0$ and 
$(-1)^{\frac{n-1}{2}}Q_{2}^{*}(\m)=\square$,}\\
0, & \mbox{otherwise.}
\end{cases}
$$
According to our conventions we note that the first case in the definition of 
$\theta_{2}(n;\m)$ only arises for even $n$ and likewise the third case only arises for odd $n$.
Drawing together 
Lemmas~\ref{lem:q-triv}, \ref{lem:0} and \ref{lem:2}, and 
using Lemma \ref{ubI_q2}, we therefore obtain the estimate
\begin{align*}
&\ll |\m|^{\theta_{1}(n;\m)} (dN)^\ve \int_{1}^{cQ/\delta}
y^{\frac{n}{2}+1+\theta_{2}(n;\m)+\ve}
\left|
\frac{\partial}{\partial y}
\left(\frac{I_{d,\delta
y}(\m)}{y^n}\right)\right|\d y\\
&\ll    
\left(\frac{d\delta}{B|\m|}\right)^{\frac{n}{2}-1}
|\m|^{\theta_{1}(n;\m)} (dNB)^\ve \int_{1}^{cQ/\delta}
y^{\frac{n}{2}+1+\theta_{2}(n;\m)}
\cdot 
y^{-\frac{n}{2}-2}
\d y,
\end{align*}
for the above integral.
Let 
\begin{equation}
  \label{eq:def-theta}
  \theta_{1}(n)=\begin{cases}
0, &\mbox{if $n$ is even},\\
\frac{7}{16}, & \mbox{if $n$ is odd},
\end{cases}\quad
  \theta_{2}(n)=\begin{cases}
\frac{1}{2}, & \mbox{if $2\mid n$ and $(-1)^{\frac{n}{2}}\det\M_{2}=\square$,}\\
0, &\mbox{otherwise.} 
\end{cases}
\end{equation}
Returning to our initial estimate for $U_{T,\a}(B,D)
$ and recalling the definition \eqref{eq:S'} of $S_{d,q}(\m)$,
we now  have everything in place to establish
the following result.

\begin{lemma}\label{lem:Asum}
We have 
$$
U_{T,\a}(B,D)
\ll  \frac{B^{\frac{n}{2}-1+\ve}}{D^{\frac{n}{2}}} \left(
U^{(1)}+U^{(2)}\right),
$$
where
$$
U^{(1)}
=
\sum_{\substack{0<|\m|\leq\sqrt{D}B^{\varepsilon}\\ 
(-1)^{\frac{n-1}{2}}Q^*_2(\m)=\square}}
\frac{(B/\sqrt{D})^{\frac{1}{2}+\theta_{2}(n)}}{|\m|^{\frac{n}{2}-1}}
\sum_{\substack{d\sim D\\
(d,\Delta_V^\infty)\leq \Xi}}
\sum_{\substack{\delta\mid d^{\infty}
\\\delta\ll B}}
\frac{|S_{d,\delta}(\m)|}{\delta^{\frac{n}{2}+1}}
$$
and 
$$
U^{(2)}
=
\sum_{\substack{0<|\m|\leq\sqrt{D}B^{\varepsilon}\\ 
(-1)^{\frac{n-1}{2}}Q^*_2(\m)\neq \square
}}
\frac{|\m|^{\theta_{1}(n)}}{|\m|^{\frac{n}{2}-1}}
\sum_{\substack{d\sim D\\
(d,\Delta_V^\infty)\leq \Xi}}
\sum_{\substack{\delta\mid d^{\infty}
\\\delta\ll B}}
\frac{|S_{d,\delta}(\m)|}{\delta^{\frac{n}{2}+1}}.
$$
\end{lemma}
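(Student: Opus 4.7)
The plan is to pick up exactly where the text preceding the lemma leaves off: the inner sum over $q$ coprime to $dN$ has been bounded by
$$
\left(\frac{d\delta}{B|\m|}\right)^{\frac{n}{2}-1}|\m|^{\theta_1(n;\m)}(dNB)^{\varepsilon}\int_{1}^{cQ/\delta}y^{\theta_2(n;\m)-1}\,\d y.
$$
Evaluating the integral in closed form gives $O(B^{\varepsilon})$ when $\theta_2(n;\m)=0$ (absorbing a logarithm) and $O((Q/\delta)^{\theta_2(n;\m)})$ otherwise. With $Q=B/\sqrt{d}\asymp B/\sqrt{D}$ on the range $d\sim D$, the inner sum over $q$ is therefore bounded by $(d\delta/(B|\m|))^{\frac{n}{2}-1}|\m|^{\theta_1(n;\m)}(B/(\sqrt{D}\delta))^{\theta_2(n;\m)}(dNB)^{\varepsilon}$.

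Substituting this into the initial estimate for $U_{T,\a}(B,D)$, I then collect powers: the prefactor $B^{n-2}$ combines with $(B|\m|)^{-(n/2-1)}$ to produce $B^{n/2-1}/|\m|^{n/2-1}$; the factor $d^{-(n-1)}$ combines with $d^{n/2-1}$ to produce $d^{-n/2}\asymp D^{-n/2}$; and the factor $\delta^{-n}$ combines with $\delta^{n/2-1}(Q/\delta)^{\theta_2(n;\m)}$ to produce $\delta^{-n/2-1-\theta_2(n;\m)}(B/\sqrt{D})^{\theta_2(n;\m)}$. Altogether $U_{T,\a}(B,D)$ is bounded by $B^{n/2-1+\varepsilon}/D^{n/2}$ times a sum over $\m,d,\delta$ of $|T_{d,\delta}(\m)||\m|^{\theta_1(n;\m)-n/2+1}\delta^{-n/2-1-\theta_2(n;\m)}(B/\sqrt{D})^{\theta_2(n;\m)}$.

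The next step is to trade $|T_{d,\delta}(\m)|$ for $|S_{d,\delta}(\m)|$, which is all that appears in the statement. I factor $\delta=2^{\ell}\delta'$ with $\delta'$ odd, invoke Lemma \ref{lem:mult1} to separate the $2$-adic piece, and apply the crude bound \eqref{eq:Sell-upper} on $S^{\pm}_{1,2^{\ell}}(\m)$; the resulting loss of $2^{\ell(n/2+1)}$ is absorbed into the $\delta^{-n/2-1}$ denominator by crude estimation, and Lemma \ref{lem:mult2} realises the remaining product as $|S_{d,\delta}(\m)|$. I then partition $\m$ according to whether $(-1)^{(n-1)/2}Q_2^*(\m)$ is a square. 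On the first subset one always has $\theta_1(n;\m)=0$ and $\theta_2(n;\m)=\tfrac12+\theta_2(n)$, matching the definition of $U^{(1)}$; on the complement $\theta_2(n;\m)=0$ and $\theta_1(n;\m)=\theta_1(n)$, matching $U^{(2)}$. The extraneous $\delta^{-\theta_2(n;\m)}\leq 1$ is discarded in the first case.

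The argument is entirely mechanical once the estimates of Sections 4--6 are in hand, and no new idea enters here. The only subtle point is verifying the identity $\theta_2(n;\m)=\tfrac12+\theta_2(n)$ uniformly over the $U^{(1)}$ family; this requires checking the four subcases of $\theta_2(n;\m)$ against the two subcases of $\theta_2(n)$, under the convention that for even $n$ the condition $(-1)^{(n-1)/2}Q_2^*(\m)=\square$ is read as $Q_2^*(\m)=0$ (so that the two sub-subcases from Lemma \ref{lem:0} are precisely what appear), and for odd $n$ the condition covers both $Q_2^*(\m)=0$ (Lemma \ref{lem:0}) and $Q_2^*(\m)\neq 0$ with $(-1)^{(n-1)/2}Q_2^*(\m)=\square$ (the pole case of Lemma \ref{lem:2}), each of which contributes $\theta_2(n;\m)=\tfrac12$ while $\theta_2(n)=0$.
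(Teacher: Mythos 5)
Your proposal follows the paper's proof essentially step for step: the partial-summation bound on the $q$-sum, the collection of powers of $B,d,\delta$, the separation into $U^{(1)}$ and $U^{(2)}$ by whether $(-1)^{(n-1)/2}Q_2^*(\m)$ is a square, the discarding of the extraneous $\delta^{-\theta_2(n;\m)}$ factor, and the verification of the identity $\theta_2(n;\m)=\tfrac12+\theta_2(n)$ on the $U^{(1)}$ family are all exactly as in the paper. The one place you elide a genuinely needed intermediate step is the passage from $\delta\mid(dN)^\infty$ to $\delta\mid d^\infty$: separating the $2$-adic part via Lemma \ref{lem:mult1} leaves $\delta'$ running over odd divisors of $(dN)^\infty$, not of $d^\infty$, so one must additionally split off the odd $N$-part $\delta_2$, bound $S_{1,\delta_2}(\m)=\cQ_{\delta_2}(\m)\ll\delta_2^{n/2+1}$ via \eqref{cor:Q_p^r}, and observe that the resulting sum $\sum_{\delta_2\mid N^\infty,\delta_2\ll B}1$ is $O((NB)^\ve)$ by \eqref{eq:scat}; your invocation of Lemma \ref{lem:mult2} does not by itself produce this reduction, though it is easily supplied by the same technique you use for the $2$-part.
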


\begin{proof}
Our work so far shows that 
$U_{T,\a}(B,D)\ll C^{(1)}+C^{(2)}$, with 
\begin{align*}
C^{(1)}
&=
\frac{
B^{n-2+\ve}}{B^{\frac{n}{2}-1}}
\sum_{\substack{0<|\m|\leq\sqrt{D}B^{\varepsilon}\\ 
(-1)^{\frac{n-1}{2}}Q_{2}^{*}(\m)=\square
}}
\frac{(B/\sqrt{D})^{\theta_{2}(n;\m)}}{|\m|^{\frac{n}{2}-1}}
\sideset{}{'}
\sum_{\substack{d\sim D\\
(d,\Delta_V^\infty)\leq \Xi}}
\frac{1}{d^{\frac{n}{2}}}
\sum_{\substack{\delta\mid (dN)^{\infty}
\\\delta\ll B}}
\frac{|T_{d,\delta}(\m)|}{\delta^{\frac{n}{2}+1+\theta_{2}(n;\m)}}
\end{align*}
and 
$$
C^{(2)}
=
\frac{B^{n-2+\ve}
}{B^{\frac{n}{2}-1}}
\sum_{\substack{0<|\m|\leq\sqrt{D}B^{\varepsilon}\\ 
(-1)^{\frac{n-1}{2}}Q^*_2(\m)\neq \square}}
\frac{|\m|^{\theta_{1}(n)}}{|\m|^{\frac{n}{2}-1}}
\sideset{}{'}
\sum_{\substack{d\sim D\\
(d,\Delta_V^\infty)\leq \Xi}}
\frac{1}{d^{\frac{n}{2}}}
\sum_{\substack{\delta\mid (dN)^{\infty}
\\\delta\ll B}}
\frac{|T_{d,\delta}(\m)|}{\delta^{\frac{n}{2}+1}}.
$$
We note that $\theta_{2}(n;\m)=\frac{1}{2}+\theta_{2}(n)$ in  $C^{(1)}$,
but take $\frac{n}{2}+1$ for the exponent of $\delta$.
Drawing together Lemma~\ref{lem:mult1}, \eqref{eq:Sell-upper} and 
\eqref{cor:Q_p^r}, it follows
that
$$
\sum_{\substack{\delta\mid N^{\infty}\\\delta\ll B}}
\frac{|T_{1,\delta}(\m)|}{\delta^{\frac{n}{2}+1}}\ll 
\sum_{\substack{\delta\mid N^{\infty}\\\delta\ll B}}1\ll (N
B)^{\varepsilon},
$$ 
where the final inequality follows from  \eqref{eq:scat}. 
Thus 
we can restrict $\delta$ to be a divisor of $d^\infty$ in $C^{(1)}$ and $C^{(2)}$
at the cost of enlarging the bound by $B^{\varepsilon}$.
In particular, since $d$ is odd, it follows that $\delta$ is odd and so Lemma 
\ref{lem:mult1} implies that 
 $T_{d,\delta}(\m)=S_{d,\delta}(\m)$. 
Finally, on taking $d>D/2$ 
in the denominator of both expressions, we
arrive at the statement of the lemma. 
\end{proof}

We are now ready to commence our detailed estimation of $U_{T,\ma{a}}(B,D)$,
based on  Lemma~\ref{lem:Asum}. 
We begin by directing our attention to the estimation of
$U^{(2)}$.
Pulling out the greatest common divisor  $h$ of
$\m$, and then splitting $d=d_1d_2$ and $\delta=\delta_1\delta_2$, with
$\delta_1\mid  d_1^\infty$, $d_1\mid h^{\infty}$, $\delta_2\mid d_2^\infty$ and $(d_2,h)=1$, 
it follows that
\begin{equation} 
\label{eq:A2}
U^{(2)}
=
\sum_{0<h\leq\sqrt{D}B^{\varepsilon}}\frac{h^{\theta_{1}(n)}}{h^{\frac{n}{2}-1}}
\sum_{\substack{0<|\m|\leq\frac{\sqrt{D}B^{\varepsilon}}{h}\\ 
(-1)^{\frac{n-1}{2}}Q^*_2(\m)\neq \square\\ 
\gcd(\m)=1}}
\frac{|\m|^{\theta_{1}(n)}}{|\m|^{\frac{n}{2}-1}}
\sum_{\substack{d_1\leq
D\\d_1\mid h^{\infty}\\
(d_1,\Delta_V^\infty)\leq \Xi
}}\sum_{\substack{\delta_1\mid d_1^{\infty}\\\delta_1\ll B}}
\frac{|S_{d_1,\delta_1}(h\m)|}{\delta_1^{\frac{n}{2}+1}}
\Sigma_1,
\end{equation}
where if $\Xi_{d_1}=\Xi/(d_1,\Delta_V^\infty)$, then 
$$
\Sigma_1=\sum_{\substack{d_2\sim
\frac{D}{d_1}\\(d_2,h)=1\\
(d_2,\Delta_V^\infty)\leq \Xi_{d_1}
}}\sum_{\substack{\delta_2\mid d_2^{\infty}\\\delta_2\ll B}}
\frac{|S_{d_2,\delta_2}(h\m)|}{\delta_2^{\frac{n}{2}+1}}.
$$
Here we recall from \S \ref{prelim} that $S_{d_2,\delta_2}(h\m)=S_{d_2,\delta_2}(\m)$ since $(\delta_2d_2,h)=1$.
Now set 
$$
H(\m)=
\begin{cases}
\Delta_V \det \M_2 G(\m)Q^*_2(\m), & \mbox{if $G(\m)\neq 0$,}\\
\Delta_V \det \M_2 Q^*_2(\m), & \mbox{if $G(\m)=0$,}
\end{cases}
$$
where 
$G$ is the dual form introduced in \S \ref{geometry}. 
Note that $Q_{2}^{*}(\m)\neq 0$ in this definition,
so that $H(\m)$ is a non-zero integer. 

We further split $d_2=d_{21}d_{22}$ and
$\delta_2=\delta_{21}\delta_{22}$ with
$\delta_{21}\mid d_{21}^\infty$, $d_{21}\mid H(\m)^{\infty}$,
$\delta_{22}\mid d_{22}^\infty$ and
$(d_{22},H(\m))=1$. 
It follows that
$$ 
\Sigma_1
\leq 
\sum_{\substack{d_{21}
\leq \frac{D}{d_1}\\d_{21}\mid H(\m)^{\infty}\\
(d_{21},h)=1\\
(d_{21},\Delta_V^\infty)\leq \Xi_{d_1}
}}\sum_{\substack{\delta_{21}\mid d_{21}^{\infty}\\\delta_{21}\ll B}}\sum_{\substack{d_{22}\sim 
\frac{D}{d_1d_{21}}\\(d_{22},hH(\m))=1}}
\sum_{\substack{\delta_{22}\mid d_{22}^{\infty}\\\delta_{22}\ll B}}
\frac{|S_{d_{21},\delta_{21}}(\m)||S_{d_{22},\delta_{22}}(\m)|}{(\delta_{21}\delta_{22})^{\frac{n}{2}+1}}.
$$ 
In view of the fact that 
$(d_{22},2\det \M_2 Q^*_2(\m))=1$, it follows from Lemmas
\ref{lem:mixed-strong-1} and 
\ref{lem:mixed-strong-2}
that $S_{d_{22},\delta_{22}}(\m)$
vanishes unless $\delta_{22}=1$. Hence we may conclude 
that the sum over $d_{22}$ and $\delta_{22}$ is
$$ 
\sum_{\substack{d_{22}\sim
\frac{D}{d_1d_{21}}\\
(d_{22},hH(\m))=1}}|\cD_{d_{22}}(\m)| \ll 
\left(\frac{D}{d_1d_{21}}\right)^{\frac{n}{2}+\psi_1(\m)+\ve},
$$ 
by Lemmas \ref{lem:dave} and \ref{lem:dave'},
where
$$
  \psi_{1}(\m)=
\begin{cases}
\frac{1}{2}, &\mbox{if $G(\m)=0$,}\\
0, &\mbox{otherwise.}
\end{cases}
$$
It follows that
$$ 
\Sigma_1
\ll  \left(\frac{D}{d_1}\right)^{\frac{n}{2}+\psi_{1}(\m)+\ve}
\sum_{\substack{d_{21}
\leq \frac{D}{d_1}\\d_{21}\mid H(\m)^{\infty}\\
(d_{21},h)=1\\
(d_{21},\Delta_V^\infty)\leq \Xi_{d_1}
}}
\sum_{\substack{\delta_{21}\mid d_{21}^{\infty}\\\delta_{21}\ll B}}
\frac{|S_{d_{21},\delta_{21}}(\m)|}{d_{21}^{\frac{n}{2}+
\psi_{1}(\m)}\delta_{21}^{\frac{n}{2}+1}}. 
$$

Now there is a factorisation $d_{21}=d_{21}'d_{21}''$ such that 
$S_{d_{21},\delta_{21}}(\m)=\cM_{d_{21}',\delta_{21}}(\m)\cD_{d_{21}''}(\m)$, 
where $\delta_{21}\mid d_{21}'^{\infty}$. It 
therefore follows from  
Lemma \ref{lem:r rough}  and 
 Lemma \ref{lem:mixed-strong'} that  
$$
S_{d_{21},\delta_{21}}(\m)\ll (d_{21},\Delta_V^\infty)^{\frac{n}{2}-2}
d_{21}^{\frac{n}{2}+\ve}\delta_{21}^{\frac{n}{2}+1},
$$
since $\m$ is primitive. 
Hence 
 \begin{align*}
\Sigma_1
&\ll  \Xi_{d_1}^{\frac{n}{2}-2} \left(\frac{D}{d_1}\right)^{\frac{n}{2}+\psi_{1}(\m)+\ve}
B^{\ve}.
\end{align*}
Substituting this into \eqref{eq:A2} we now examine
\begin{align*} 
\Sigma_{2}
&=
 \sum_{\substack{d_1\leq
D\\d_1\mid h^{\infty}\\
(d_{1},\Delta_V^\infty)\leq \Xi
}}\sum_{\substack{\delta_1\mid d_1^{\infty}\\\delta_1\ll B}}
\frac{|S_{d_1,\delta_1}(h\m)|}{\delta_1^{\frac{n}{2}+1}}
\Sigma_1\\
&\ll 
D^{\frac{n}{2}+\psi_{1}(\m)+\ve}
B^{\ve}
 \sum_{\substack{d_1\leq
D\\d_1\mid h^{\infty}\\
(d_{1},\Delta_V^\infty)\leq \Xi
}}
\frac{\Xi^{\frac{n}{2}-2}
}{(d_{1},\Delta_V^\infty)^{\frac{n}{2}-2}
}
\sum_{\substack{\delta_1\mid d_1^{\infty}\\\delta_1\ll  B}}
\frac{|S_{d_1,\delta_1}(h\m)|}{d_{1}^{\frac{n}{2}+\psi_{1}(\m)}
\delta_1^{\frac{n}{2}+1}}.
\end{align*}
We repeat the process that we undertook above to estimate 
$
S_{d_1,\delta_1}(h\m)$, using 
Lemma \ref{lem:mixed-strong'}  and 
Lemma \ref{lem:r rough}. This gives
$$
\frac{|S_{d_1,\delta_1}(h\m)|}{d_{1}^{\frac{n}{2}+\psi_{1}(\m)}
\delta_1^{\frac{n}{2}+1}}
\ll
(d_{1},\Delta_V^\infty)^{\frac{n}{2}-2}  
d_1^\ve h^{\frac{n}{2}-2-\psi_{1}(\m)}. 
$$
By \eqref{eq:scat}  there are only $O(B^{\ve}D^{\ve})$ values of
$\delta_{1}$ that feature in this analysis.  
In this way we arrive at the estimate
\begin{equation}\label{eq:sigma2}  
\Sigma_{2}
\ll  \Xi^{\frac{n}{2}-2}
D^{\frac{n}{2}+\psi_{1}(\m)+\ve}
B^{\ve}
h^{\frac{n}{2}-2-\psi_{1}(\m)}. 
\end{equation}

It is time to distinguish between whether $G(\m)=0$ or $G(\m)\neq 0$
in our analysis of $U^{(2)}$. Accordingly, let us write
$U^{(2)}=U^{(21)}+U^{(22)}$ for the corresponding decomposition.  
We begin with  a discussion of $U^{(22)}$ , for which $\psi_{1}(\m)=0$
in   \eqref{eq:sigma2} . We deduce from \eqref{eq:A2} that
\begin{align*}
U^{(22)}
&\ll \Xi^{\frac{n}{2}-2}
D^{\frac{n}{2}+\ve}
B^{\ve}
\sum_{\substack{0<h\leq \sqrt{D}B^{\varepsilon}}}
h^{\theta_{1}(n)-1}
\sum_{\substack{0<|\m|\leq\frac{\sqrt{D}B^{\varepsilon}}{h}}}
\frac{|\m|^{\theta_{1}(n)}}{|\m|^{\frac{n}{2}-1}}\\
&\ll \Xi^{\frac{n}{2}-2}
D^{\frac{n}{2}+\ve}
B^{\ve}
\sum_{\substack{0<h\leq \sqrt{D}B^{\varepsilon}}}
h^{\theta_{1}(n)-1}
\left(\frac{\sqrt{D}B^{\varepsilon}}{h}\right)^{\frac{n}{2}+1+\theta_1(n)},
\end{align*}
on breaking the sum over $\m$ into dyadic intervals for $|\m|$. The
sum over $h$ is therefore convergent and we conclude that 
\begin{equation}\label{eq:A22} 
\begin{split}
U^{(22)}
&\ll
\Xi^{\frac{n}{2}-2}D^{\frac{n}{2}+\ve}
B^{\ve}
 \left(\sqrt{D}\right)^{\frac{n}{2}+1+\theta_1(n)}\\
 &=
\Xi^{\frac{n}{2}-2} D^{\frac{3n}{4}+\frac{1+\theta_1(n)}{2}+\ve}
B^{\ve}.
 \end{split}
  \end{equation}

We now turn to a corresponding analysis of $U^{(21)}$, for which 
$\psi_{1}(\m)=\frac{1}{2}$
in   \eqref{eq:sigma2} . It follows from \eqref{eq:A2} that
\begin{align*} 
U^{(21)}
&\ll \Xi^{\frac{n}{2}-2}
D^{\frac{n+1}{2}+\ve}B^{\ve}
\sum_{\substack{0<h\leq \sqrt{D}B^{\varepsilon}}}
h^{\theta_{1}(n)-\frac{3}{2}} 
\sum_{\substack{0<|\m|\leq\frac{\sqrt{D}B^{\varepsilon}}{h}\\ G(\m)=0}}
\frac{|\m|^{\theta_{1}(n)}}{|\m|^{\frac{n}{2}-1}}\\
&\ll \Xi^{\frac{n}{2}-2}
D^{\frac{n+1}{2}+\ve}B^{\ve}
\max_{\frac{1}{2}<M\leq 
\sqrt{D}B^{\varepsilon}}
M^{\theta_{1}(n)+1-\frac{n}{2}}
\sum_{\substack{|\m|\leq M \\ G(\m)=0}}1.
\end{align*}
Appealing to \eqref{eq:count}, we therefore deduce that
\begin{equation}\label{eq:A21} 
\begin{split}
U^{(21)}
&\ll \Xi^{\frac{n}{2}-2}
D^{\frac{n+1}{2}+\ve}
B^{\ve}
\left(\sqrt{D}\right)^{\frac{n}{2}-1+\theta_1(n)}\\
&=
\Xi^{\frac{n}{2}-2}
D^{\frac{3n}{4}+\frac{\theta_1(n)}{2}+\ve}
B^{\ve}.
\end{split}
\end{equation}

Our final task in this section is to estimate $U^{(1)}$ in Lemma
\ref{lem:Asum}, for which we will be able to recycle most of the
treatment of $U^{(2)}$.
Following the steps up to \eqref{eq:sigma2} we find that
$$
U^{(1)}
\ll \Xi^{\frac{n}{2}-2}
D^{\frac{n}{2}+\ve}
\left(\frac{B}{\sqrt{D}}\right)^{\frac{1}{2}+\theta_{2}(n)+\ve}
\sum_{\substack{0<|\m|\leq \sqrt{D}B^{\varepsilon}\\ (-1)^{\frac{n-1}{2}}Q^*_2(\m)=\square}}
D^{\psi_{1}(\m)} 
|\m|^{1-\frac{n}{2}}.
$$
One notes that in the
absence of the function $\theta_{1}(n)$, the exponent of $h$ is at most
$-1$, so that the summation over $h$ can be carried out immediately.
As previously it will be necessary
to write
$U^{(1)}=U^{(11)}+U^{(12)}$, where 
$U^{(11)}$ denotes the contribution from the case 
$G(\m)= 0$ and $U^{(12)}$ is the remaining contribution. 
Beginning with the latter, in which case $\psi_{1}(\m)=0$, we deduce that 
\begin{align*}
U^{(12)}
\ll \Xi^{\frac{n}{2}-2}
D^{\frac{n}{2}+\ve}
\left(\frac{B}{\sqrt{D}}\right)^{\frac{1}{2}+\theta_{2}(n)+\ve}
\max_{\frac{1}{2}<M\leq 
\sqrt{D}B^{\varepsilon}}
M^{1-\frac{n}{2}}
\sum_{\substack{|\m|\leq M \\  (-1)^{\frac{n-1}{2}}Q_2^*(\m)=\square}}1.
\end{align*}
Applying \eqref{eq:count'} we therefore obtain
\begin{equation}\label{eq:A12}
\begin{split}
U^{(12)}
&\ll \Xi^{\frac{n}{2}-2}
D^{\frac{n}{2}+\ve}
\left(\frac{B}{\sqrt{D}}\right)^{\frac{1}{2}+\theta_{2}(n)}
B^\ve 
\left(\sqrt{D}\right)^{\frac{n}{2}}
\\&=\Xi^{\frac{n}{2}-2}
D^{\frac{3n}{4}-\frac{1}{4}-\frac{\theta_2(n)}{2}+\ve}
B^{\frac{1}{2}+\theta_2(n)+\ve}.
\end{split}
\end{equation}
For the remaining contribution, with $\psi_{1}(\m)=\frac{1}{2}$,
we will drop the fact that
$(-1)^{\frac{n-1}{2}}Q_2^*(\m)$ should be a square from the sum over $\m$ since
there is already sufficient gain from the fact that $G(\m)$ 
vanishes. Arguing as above, but this time with recourse to
\eqref{eq:count}, we conclude that
\begin{equation}\label{eq:A11}
\begin{split}
U^{(11)}
&\ll
\Xi^{\frac{n}{2}-2}
D^{\frac{n+1}{2}+\ve}
\left(\frac{B}{\sqrt{D}}\right)^{\frac{1}{2}+\theta_{2}(n)}
B^\ve 
\left(\sqrt{D}\right)^{\frac{n}{2}-1}\\
&=
\Xi^{\frac{n}{2}-2}
D^{\frac{3n}{4}-\frac{1}{4}-\frac{\theta_2(n)}{2}+\ve}
B^{\frac{1}{2}+\theta_2(n)+\ve}.
\end{split}
\end{equation}

Recall the definitions \eqref{eq:def-theta} of $\theta_{1}$ and $\theta_{2}$.
Combining \eqref{eq:A22}--\eqref{eq:A11} in Lemma \ref{lem:Asum}, we may now record our final
bound for 
$U_{T,\a}(B,D)$.

\begin{lemma}\label{lem:Asum'}
Let $n\geq 5$ and $ D\geq 1$. Then we have 
\begin{align*}
U_{T,\a}(B,D) 
\ll \Xi^{\frac{n}{2}-2} B^{\frac{n}{2}-1+\ve} 
\left(
D^{\frac{n}{4}+\frac{1+\theta_1(n)}{2}+\ve}
+
D^{\frac{n}{4}-\frac{1}{4}-\frac{\theta_2(n)}{2}+\ve}
B^{\frac{1}{2}+\theta_2(n)}\right).
\end{align*}
\end{lemma}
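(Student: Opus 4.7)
The plan is to combine the four intermediate bounds \eqref{eq:A22}, \eqref{eq:A21}, \eqref{eq:A12}, \eqref{eq:A11} that have already been established, via the framework of Lemma \ref{lem:Asum}. Recall from that lemma that
$$
U_{T,\a}(B,D)\ll  \frac{B^{\frac{n}{2}-1+\ve}}{D^{\frac{n}{2}}}\bigl(U^{(1)}+U^{(2)}\bigr),
$$
so the proof reduces to producing the right upper bounds for $U^{(1)}$ and $U^{(2)}$ and then combining them.

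First I would point out that each of the sums $U^{(j)}$ ($j=1,2$) naturally splits according to whether $G(\m)=0$ or $G(\m)\neq 0$, giving rise to the decomposition $U^{(1)}=U^{(11)}+U^{(12)}$ and $U^{(2)}=U^{(21)}+U^{(22)}$ introduced in the earlier discussion. The non-trivial input for each piece is exactly what the preceding calculation has delivered: applying the estimates derived from Lemmas \ref{lem:mixed-strong'} and \ref{lem:r rough} to the character sum $S_{d,\delta}(\m)$, the bounds for $|\cD_{d_{22}}(\m)|$ provided by Lemmas \ref{lem:dave} and \ref{lem:dave'} in the respective cases $G(\m)\neq 0$ and $G(\m)=0$, and the point-counting estimates \eqref{eq:count} and \eqref{eq:count'} for the sum over $\m$.

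Next I would simply invoke the four bounds already displayed. From \eqref{eq:A21} and \eqref{eq:A22} one sees that
$$
U^{(2)}=U^{(21)}+U^{(22)}\ll \Xi^{\frac{n}{2}-2} D^{\frac{3n}{4}+\frac{1+\theta_1(n)}{2}+\ve}B^{\ve},
$$
the second summand being dominant (since $\frac{1+\theta_1(n)}{2}\geq \frac{\theta_1(n)}{2}$). Similarly, from \eqref{eq:A11} and \eqref{eq:A12},
$$
U^{(1)}=U^{(11)}+U^{(12)}\ll \Xi^{\frac{n}{2}-2}D^{\frac{3n}{4}-\frac{1}{4}-\frac{\theta_2(n)}{2}+\ve}B^{\frac{1}{2}+\theta_2(n)+\ve}.
$$
Inserting these into the bound of Lemma \ref{lem:Asum} and cancelling the factor $D^{n/2}$ yields the two terms in the claimed estimate.

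There is essentially no obstacle at this final stage: the hard work has been absorbed into the earlier estimates for $U^{(11)}$, $U^{(12)}$, $U^{(21)}$, $U^{(22)}$, whose derivation required the delicate interplay between the multiplicative structure of $S_{d,q}(\m)$, the explicit analysis of $\cM_{d,q}(\m)$ and $\cD_d(\m)$, and the combinatorial control on $\m$ coming from \eqref{eq:count} and \eqref{eq:count'}. The only thing to verify carefully is that the exponent of $\Xi$ is indeed $\tfrac{n}{2}-2$ uniformly in all four subcases, which it is because in each case the factor $\Xi^{\frac{n}{2}-2}$ was extracted from the double application of the estimate $S_{d_i,\delta_i}(\m)\ll (d_i,\Delta_V^\infty)^{\frac{n}{2}-2}d_i^{\frac{n}{2}+\ve}\delta_i^{\frac{n}{2}+1}$ with $(d_i,\Delta_V^\infty)\leq \Xi$.
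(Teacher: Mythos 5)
Your proof is correct and follows exactly the route the paper itself takes: it records the four bounds \eqref{eq:A11}--\eqref{eq:A22}, sums them pairwise to bound $U^{(1)}$ and $U^{(2)}$, and substitutes into Lemma \ref{lem:Asum}, cancelling the $D^{n/2}$. The arithmetic on the exponents (in particular the observation that $D^{\frac{3n}{4}+\frac{\theta_1(n)}{2}+\ve}\leq D^{\frac{3n}{4}+\frac{1+\theta_1(n)}{2}+\ve}$ for $D\geq 1$, and that \eqref{eq:A11} and \eqref{eq:A12} coincide) checks out.
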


 \section{Proof of Theorem \ref{th1}: conclusion}
\label{s:conclusion}

Recall the expression for $S_{T,\a}^\sharp(B)$ recorded at the start of \S \ref{pt1}.
We now have everything in place to estimate the overall contribution
to this sum from the non-zero $\m$. An upper bound for this
contribution is obtained by taking $D\ll B$ in  
Lemma \ref{lem:Asum'}'s estimate  for the
quantity introduced in \eqref{eq:AAsum}. 
This gives the overall contribution 
\begin{equation*}
\begin{split}
&\ll \Xi^{\frac{n}{2}-2}
B^{\frac{n}{2}-1+\ve} \left(
B^{\frac{n}{4}+\frac{1+\theta_{1}(n)}{2}}
+
B^{\frac{n}{4}+\frac{1}{4}+\frac{\theta_{2}(n)}{2}}
\right)\\
&\ll \Xi^{\frac{n}{2}-2}
B^{\frac{3n}{4}-\frac{1}{2}+\frac{\theta_{1}(n)}{2} +\varepsilon}.
\end{split}
\end{equation*}
Combining  this with Lemma \ref{S(B)}, Lemma \ref{lem:flat} 
and Lemma \ref{psum}, our work so far has shown that 
\begin{equation}\label{eq:nose}
S(B)= 
M^\sharp(B)+
O(\Xi^{-\frac{1}{n}}B^{n-2+\ve}+\Xi B^{n-3+\ve}
+
\Xi^{\frac{n}{2}-2} B^{\frac{3n}{4}-\frac{1}{2}+\frac{\theta_{1}(n)}{2} +\varepsilon}),
\end{equation}
where
\begin{equation}\label{eq:main}
M^\sharp (B)
=\frac{B^{n-2}}{4^{n-1}} 
\sum_T
\sum_{\substack{
\a\in (\mathbb Z/4\mathbb Z)^n\\
Q_1(\a)\equiv 1 \bmod{4}}}
\sum_{\substack{d=1\\
(d,\Delta_V^\infty)\leq \Xi}}^\infty \frac{\chi(d)}{d^{n-1}}
  \sum_{q=1}^\infty\frac{1}{q^n} 
T_{d,q}(\ma{0})I_{d,q}(\ma{0}).
\end{equation}

We begin with a few words about the integral
$$
I_{d,q}(\ma{0})=
\int_{\mathbb R^n}
h\left(\frac{q\sqrt{d}}{B}, Q_2(\y)\right)W_d(\y) \d \mathbf y,
$$
where $W_d$ is given by \eqref{eq:W_d}
and 
we have made the substitution $Q=B/\sqrt{d}$. Recall the correspondence
\eqref{eq:I*}
between $I_{d,q}(\ma{0})$ and $I_r^*(\ma{0})$.
Recall additionally the properties of $h(x,y)$ and the weight function $W_d$ that were recorded in \S \ref{prelim}. In particular 
$\nabla Q_2(\y)\gg 1$ on $\supp(W_d)$ and 
we have 
$d\ll B$ and 
 $q\sqrt{d}\ll B$ if $I_{d,q}(\ma{0})$ is non-zero. Combining \cite[Lemma 14]{H} and 
\cite[Lemma~15]{H} it follows that 
\begin{equation}\label{eq:I-trivial}
I_{d,q}(\ma{0})\ll 1.
\end{equation}
Furthermore, according to \cite[Lemma 13]{H}, we have
\begin{equation}\label{eq:home}
I_{d,q}(\ma{0})=
\tau_{\infty}(Q_2,W_d)+O_N\left\{\left( \frac{q\sqrt{d}}{B}\right)^N\right\},
\end{equation}
for any $N>0$, where for any infinitely differentiable bounded 
function  $\omega: \RR^n\rightarrow \RR$ of compact support we set
\begin{equation}\label{eq:tau-inf}
\tau_{\infty}(Q_2,\omega)=
\lim_{\varepsilon\rightarrow 0}(2\varepsilon)^{-1}\int_{|Q_2(\y)|\leq \varepsilon}\omega(\y)\d\y.
\end{equation}
 In fact 
$\tau_{\infty}(Q_2,\omega)$ is the real density of points on the affine cone over
the hypersurface  $Q_2=0$, weighted by $\omega$.
We will use these facts to extract the dependence on $I_{d,q}(\ma{0})$ from \eqref{eq:main}. 

Returning to \eqref{eq:main}, our main goal in this section will be a proof of the following 
asymptotic formula.

\begin{lemma}\label{lem:main}
Let $n\geq 5$, let $\ve>0$ and assume Hypothesis-$\rho$.
Then we have
$$
M^\sharp(B)= 
B^{n-2}
\sigma_\infty \prod_p \sigma_p 
+O(\Xi^{-1}B^{n-2}+
B^{n-\frac{5}{2}+\ve}+ 
B^{\frac{3n}{4}-1+\ve}),
$$
where $\sigma_\infty$ and $\sigma_p$ are the expected local densities of points on $X(\RR)$ and  $X(\QQ_p)$, 
respectively.  In particular 
$
\sigma_\infty \prod_p \sigma_p >0
$
if $X(\RR)$ and $X(\QQ_p)$ are non-empty for each prime $p$.
\end{lemma}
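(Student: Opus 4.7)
The plan is to extract the main term in three stages: decouple the exponential integral $I_{d,q}(\ma{0})$ from the arithmetic factor using \eqref{eq:home}; identify the resulting arithmetic Dirichlet series as a product of local densities via Lemmas \ref{lem:mult1} and \ref{lem:mult2}; and reassemble the archimedean density $\sigma_\infty$ from the weighted integrals $\tau_\infty(Q_2, W_d)$. For the decoupling I would apply \eqref{eq:home} to write $I_{d,q}(\ma{0}) = \tau_\infty(Q_2, W_d) + O_N((q\sqrt{d}/B)^N)$, combined with \eqref{eq:I-trivial} in the boundary range $q\sqrt{d} \gg B$. Using the crude bound $T_{d,q}(\ma{0}) \ll \rho(d)\, q^{n/2+1}$, obtained from Lemmas \ref{lem:mult1} and \ref{lem:mult2} together with \eqref{cor:Q_p^r} and \eqref{eq:Sell-upper}, along with Hypothesis-$\rho$, the replacement error is negligible for $N$ large, and the inner sum over $q$ can be completed to infinity. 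This reduces $M^\sharp(B)$ to
\[
\frac{B^{n-2}}{4^{n-1}} \sum_T \sum_{\a} \sum_{\substack{d\\ (d,\Delta_V^\infty)\leq \Xi}} \frac{\chi(d)\, \tau_\infty(Q_2, W_d)}{d^{n-1}} \sum_{q=1}^{\infty} \frac{T_{d,q}(\ma{0})}{q^n},
\]
modulo manageable errors.

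On the arithmetic side, Lemma \ref{lem:mult1} isolates the $2$-adic factor $S^{\pm}_{1, 2^\ell}(\ma{0})$; paired with the sum over $\a \in (\ZZ/4\ZZ)^n$ with $Q_1(\a) \equiv 1 \bmod{4}$, it reconstructs the $2$-adic density $\sigma_2$ of $X(\QQ_2)$, encoding both $Q_2 = 0$ and $Q_1 = x_{n+1}^2 + x_{n+2}^2$. For each odd prime $p$, Lemma \ref{lem:mult2} yields a local factor
\[
\sigma_p = \sum_{r,s \geq 0} \frac{\chi(p^r)\, S_{p^r, p^s}(\ma{0})}{p^{r(n-1) + sn}},
\]
which by standard Hensel-lifting coincides with the Tamagawa-style $p$-adic density of $X(\QQ_p)$. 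Absolute convergence of the product $\prod_p \sigma_p$ for $n \geq 5$ is ensured by Hypothesis-$\rho$, Lemma \ref{lem:r rough} and Lemma \ref{lem:mixed-strong'}.

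The archimedean factor is recovered by summing $\tau_\infty(Q_2, W_d)$: the identity $\sum_T V_T \equiv 1$ on the range of $Q_1$ collapses the decomposition \eqref{eq:W_d}, the $\a$-sum supplies the factor $4$ from $r(M) = 4 \sum_{d \mid M} \chi(d)$, and the outer weighted sum over $d$ (after Dirichlet hyperbola rearrangement when $T > B$) reconstructs $r(Q_1(\y))$ on $\supp(W)$, yielding the expected real density of points on $X(\RR)$. The three error terms arise respectively from: removing the constraint $(d, \Delta_V^\infty) \leq \Xi$ analogously to Lemma \ref{lem:flat} but in the main-term regime, giving $\Xi^{-1} B^{n-2}$; the Perron-style contour shift in the $q$-series, analogous to Lemma \ref{lem:0}, giving $B^{n-5/2+\ve}$; and the completion of the $d$-sum beyond the effective support of $W_d$, giving $B^{3n/4-1+\ve}$.

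The main obstacle will be the precise identification of the local Euler factors with the expected densities $\sigma_p$, particularly at $p = 2$ and at primes dividing $\Delta_V$, where the device of the $\a$-sum combined with the $S^{\pm}_{1, 2^\ell}(\ma{0})$ factor must dovetail exactly with the Hensel-lifted count of solutions to the full system modulo $2^k$ in order to reproduce $\sigma_2$. Additionally, securing the sharp $\Xi^{-1} B^{n-2}$ bound (rather than the weaker $\Xi^{-1/n} B^{n-2+\ve}$ of Lemma \ref{lem:flat}) when removing the $\Xi$-cutoff will require exploiting the averaging afforded by the Dirichlet-series analysis in the main-term regime rather than a purely pointwise bound on $\rho(d)$.
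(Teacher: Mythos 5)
Your overall architecture matches the paper's: decouple $I_{d,q}(\ma{0})$ from the arithmetic part via \eqref{eq:home}, factor the singular series using Lemmas \ref{lem:mult1} and \ref{lem:mult2} into local pieces, absorb the $\a$-sum and $S^{\pm}_{1,2^\ell}(\ma{0})$ into $\sigma_2$, and recover the archimedean factor from $\sum_T V_T \equiv 1$. But there are two substantive issues in the details.

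First, your attribution of the error terms $B^{n-\frac{5}{2}+\ve}$ and $B^{\frac{3n}{4}-1+\ve}$ is backwards. The term $B^{\frac{3n}{4}-1+\ve}$ comes from truncating the $q$-sum at $q \leq B^{\frac{1}{2}-\theta}$, applying \eqref{eq:home} there, and then extending back to infinity: the tail $q > B^{\frac{1}{2}-\theta}$ contributes $\ll B^{(\frac{1}{2}-\theta)(2-\frac{n}{2})+\ve}$ to each $M_T(B)$, which after multiplying by $B^{n-2}$ and letting $\theta \to 0$ gives $B^{\frac{3n}{4}-1+\ve}$, not a completion of the $d$-sum. The term $B^{n-\frac{5}{2}+\ve}$ comes not from a $q$-series Perron argument (there is none for $\m = \ma{0}$) but from a Perron argument on the \emph{$d$-series} $\sum_d \chi(d)\rho(d)/d^{n-1}$. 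This is the content of the paper's Lemma \ref{lem:d-m0}, which you have not invoked but which is essential: after factoring out the $q$-dependence and replacing $I_{d,q}(\ma{0})$ by $\tau_\infty(Q_2,W_d)$, you must collapse the dyadic decomposition in $T$ by showing the tail sums $\sum_{T>B}L_T(B;W^{(1)})$ and $\sum_{T\leq B}L_T(B;W^{(2)})$ (where $W^{(1)}(\y)=W(\y)V_T(d)$, $W^{(2)}(\y)=W(\y)V_T(B^2Q_1(\y)/d)$) are small, and this requires precisely the cancellation encapsulated by $\eta_M(s)=L(s-(n-2),\chi)E_M(s)$ and a contour shift. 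Without this step, the claim that the $T$-sum of $L_T(B;W_d)$ equals $2C + O(B^{-\frac{1}{2}+\ve})$ is unjustified.

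Second, your Euler factor formula $\sigma_p = \sum_{r,s\geq 0} \chi(p^r) S_{p^r,p^s}(\ma{0}) \, p^{-r(n-1)-sn}$ is not the $p$-adic density but rather the quantity the paper denotes $\tau_p$, which differs by the factor $(1-\chi(p)/p)^{-1}$. This discrepancy is not cosmetic: the correction factors $(1-\chi(p)/p)^{-1}$ across odd $p$ reassemble to $L(1,\chi)=\pi/4$, and this $\pi/4$ must combine with the archimedean constant $\sigma_\infty = \pi\tau_\infty(Q_2,W)$ (from the quarter-disc integral that produces the factor $4 \cdot \tfrac12 \cdot \tfrac{\pi}{2}$) and the normalization $2/4^{n-1}$ to yield exactly $\sigma_\infty \prod_p \sigma_p$. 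Finally, the improvement over Lemma \ref{lem:flat} in the $\Xi$-removal step does not come from averaging in a Dirichlet series; it comes simply from the fact that in the main-term expression the sum over $d$ is already weighted by $d^{-(n-1)}$, so the pointwise bound $T_{d,q}(\ma{0}) \ll d^{n-2+\ve}q^{\frac{n}{2}+1}$ of \eqref{eq:hyp-est} yields a convergent $d$-sum $\sum d^{-1+\ve}$ and the factor $e>\Xi$ dividing $\Delta_V^\infty$ gives the extra $\Xi^{-1}$ directly.
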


In the context of 
Theorem \ref{th1}, for which $n\geq 7$, we note that
Hypothesis-$\rho$ follows from Lemma \ref{rho(d)}.
We now wish to apply Lemma \ref{lem:main} in  \eqref{eq:nose}
to complete the proof of Theorem~\ref{th1}.
Our estimates will be optimised by the choice  $\Xi=B^{\xi(n)}$, with 
$$
\xi(n)=\left(\frac{n}{4}-\frac{3+\theta_1(n)}{2}\right)\left(\frac{2n}{n^2-4n+2}\right),
$$
which comes from balancing the first and third error terms in \eqref{eq:nose}.
We make the observation that  $
\xi(n)<\xi(n)(1+\frac{1}{n})<1,
$
for $n\geq 7$.  Hence we obtain the overall error term $O(B^{n-2-\eta(n)+\ve})$, with
$$
\eta(n)
=\min\left\{ 
\frac{\xi(n)}{n}, ~
1-\xi(n), ~\frac{1}{2}, ~\frac{n}{4}-1
\right\}
=
\frac{\xi(n)}{n}.
$$
Observe that $\eta(n)>0$ if $n\geq 7$. At this point we stress that if we
had exponent $\frac{1}{2}$ instead of $\frac{7}{16}$ in \eqref{eq:l-series-bound}, which
corresponds to the convexity bound, we would have
$\theta_{1}(n)=\frac{1}{2}$ for odd $n$ and hence our result would only hold for
$n\geq 8$.
This completes the proof of Theorem \ref{th1}, subject to Lemma \ref{lem:main}.

\medskip

The remainder of this section will be devoted to the proof of Lemma \ref{lem:main}.
Combining \eqref{eq:Sell-upper}, \eqref{cor:Q_p^r} and Lemma \ref{lem:mixed-strong'} it follows from Hypothesis-$\rho$
that 
\begin{equation}\label{eq:hyp-est} 
T_{d,q}(\ma{0})\ll d^{n-2+\ve}q^{\frac{n}{2}+1},
\end{equation}
for any $d,q\in \NN$.
We will also make use 
of the bound 
\eqref{eq:I-trivial} and the fact that 
$d\ll B$ whenever $I_{d,q}(\ma{0})$ is non-zero.
Let $M(B)$ be defined as in \eqref{eq:main}, but in which the sum over $d$ runs over all positive integers.  It follows from  \eqref{eq:hyp-est} that 
$M^\sharp(B)=M(B)+O(\Xi^{-1}B^{n-2+\ve})$. 
Write 
$$
M(B)
=\frac{B^{n-2}}{4^{n-1}}\sum_T M_T(B),
$$
say. 

For given $\theta>0$,
let us consider the contribution to $M_T(B)$  from $q>B^{\frac{1}{2}-\theta}$. Invoking  \eqref{eq:hyp-est}, this  contribution is seen to be
\begin{align*}
&\ll
\sum_{\substack{
\a\in (\mathbb Z/4\mathbb Z)^n\\
Q_1(\a)\equiv 1 \bmod{4}}}
\sum_{d\ll B}\frac{1}{d^{n-1}}
  \sum_{q>B^{\frac{1}{2}-\theta}}\frac{|T_{d,q}(\ma{0})|}{q^n} 
\\
&\ll
\sum_{d\ll B}d^{-1+\ve}
  \sum_{q>B^{\frac{1}{2}-\theta}} 
q^{-\frac{n}{2}+1+\ve}\\
&\ll B^{(\frac{1}{2}-\theta)(-\frac{n}{2}+2)+\ve},
\end{align*}
since $n\geq 5$.
Turning to the contribution 
from $q\leq B^{\frac{1}{2}-\theta}$ we see that the error term in 
\eqref{eq:home} is $O_N(B^{-N})$ for arbitrary $N>0$, since $d\ll B$.
Hence such $q$ make the overall contribution
$$
\sum_{\substack{
\a\in (\mathbb Z/4\mathbb Z)^n\\
Q_1(\a)\equiv 1 \bmod{4}}}
\sum_{d=1}^\infty\frac{\chi(d)\tau_\infty(Q_2,W_d)}{d^{n-1}} 
  \sum_{q\leq B^{\frac{1}{2}-\theta}}\frac{T_{d,q}(\ma{0})
}{q^n} 
+O_N(B^{-N}),
$$
to $M_T(B)$. The previous paragraph shows that the summation over $q$ can be extended to infinity with error 
$O(B^{(\frac{1}{2}-\theta)(-\frac{n}{2}+2)+\ve})$. Taking $\theta$ to be a suitably small positive multiple of $\ve$, we may therefore conclude that 
$$
M_T(B)=
\sum_{\substack{
\a\in (\mathbb Z/4\mathbb Z)^n\\
Q_1(\a)\equiv 1 \bmod{4}}}
\sum_{d=1}^\infty\frac{\chi(d)\tau_\infty(Q_2,W_d)}{d^{n-1}}
  \sum_{q=1}^\infty
  \frac{T_{d,q}(\ma{0})
}{q^n} 
+O( B^{-\frac{n}{4}+1+\ve}).
$$
Let us denote by $L_T(B;W_d)$ the main term in this expression.
We proceed to introduce the summation over $T$ via the  following result, in which $\rho(d)=\cD_d(\ma{0})$.

\begin{lemma}\label{lem:d-m0}
Let $\ve>0$ and  $M\in \NN$.   Assume Hypothesis-$\rho$.
Then for any $1\leq y<x$ we have 
$$
\sum_{\substack{y<d\leq x\\ (d,M)=1}} \frac{\chi(d) \rho(d)}{d^{n-1}} \ll \frac{M^\ve}{\sqrt{y}}.
$$
\end{lemma}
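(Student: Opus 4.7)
The plan is to exploit the cancellation in $\chi$ by analysing the Dirichlet series
\[
F(s)=\sum_{\substack{d\geq 1\\(d,M)=1}}\frac{\chi(d)\rho(d)}{d^{s}},
\]
which is absolutely convergent for $\Re(s)>n-1$ by Hypothesis-$\rho$. Since $\rho(d)=\cD_{d}(\mathbf{0})$ is multiplicative by Lemma~\ref{lem:mult2}, the series admits an Euler product $\prod_{p\nmid M}F_{p}(s)$. The cornerstone of the argument is a factorisation
\[
F(s)=L(s-n+2,\chi)\,G(s),
\]
where $L(\cdot,\chi)$ is the Dirichlet $L$-function attached to the non-principal character $\chi$ modulo $4$, and $G(s)$ is an Euler product that converges absolutely and is bounded by $O(M^{\ve})$ in a half plane strictly to the left of $\Re(s)=n-1$.

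To establish this factorisation I would analyse $F_{p}(s)$ at each prime $p\nmid 2M\Delta_{V}$. For such good primes $V$ is smooth modulo $p$, so Deligne's estimate \eqref{eq:deligne} gives $\rho(p)=p^{n-2}+O(p^{(n-1)/2})$, while the higher prime powers are controlled via the recursion in the proof of Lemma~\ref{rho(d)}, exploiting the vanishing $S(k)=0$ for $k\geq 2$ at good primes. Together these inputs yield $F_{p}(s)/L_{p}(s-n+2,\chi)=1+O(p^{(n-1)/2-s})$ term by term, so that the Euler product defining $G(s)$ converges absolutely in $\Re(s)>(n+1)/2$, a half plane strictly to the left of $\Re(s)=n-1$ as soon as $n\geq 5$. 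The finitely many Euler factors at primes dividing $\Delta_{V}$ but not $M$ each contribute $O(1)$, and the missing inverse factors $\prod_{p\mid M}L_{p}(s-n+2,\chi)^{-1}$ are bounded by $2^{\omega(M)}\ll M^{\ve}$.

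With the factorisation in place, I would invoke Perron's formula for the partial sum
\[
T(z)=\sum_{\substack{d\leq z\\(d,M)=1}}\chi(d)\rho(d),
\]
shifting the contour from $\Re(s)=n-1+\ve$ down to $\Re(s)=(n+1)/2+\ve$. Since $\chi$ is non-principal, $L(s-n+2,\chi)$ is entire and no residues are picked up. Using the convexity bound \eqref{eq:convex}, supplemented by the functional equation for $L(\cdot,\chi)$ when $n\geq 6$ to cover the reflected region, combined with $G(s)\ll M^{\ve}$, and optimising the truncation height, yields $T(z)\ll M^{\ve}z^{n-1-\kappa(n)+\ve}$ for some explicit $\kappa(n)>1/2$ valid for every $n\geq 5$. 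Partial summation then gives
\[
\sum_{\substack{y<d\leq x\\(d,M)=1}}\frac{\chi(d)\rho(d)}{d^{n-1}}=\frac{T(x)}{x^{n-1}}-\frac{T(y)}{y^{n-1}}+(n-1)\int_{y}^{x}\frac{T(u)}{u^{n}}\,\d u\ll M^{\ve}y^{-\kappa(n)+\ve},
\]
and since $y^{-\kappa(n)+\ve}\leq y^{-1/2}$ for $\ve$ sufficiently small, the claimed bound $M^{\ve}/\sqrt{y}$ follows with a possibly different choice of $\ve$, as permitted by the convention on that parameter.

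The main obstacle is the factorisation step, which requires asymptotic control of $\rho(p^{r})/p^{r(n-2)}-1$ at good primes uniformly in $r$, not just in the case $r=1$ handled by Deligne. The $k=\lfloor r/2\rfloor$ branch of the recursion contributes a term of size $p^{(r-\lceil r/2\rceil)n}$ to $\rho(p^{r})$ which is only marginally dominated by the main term $p^{r(n-2)}$ when $n=5$, and it is this feature that places the abscissa of absolute convergence of $G(s)$ at $(n+1)/2$ and forces the hypothesis $n\geq 5$. Once the factorisation is established, the remainder of the proof is routine contour manipulation.
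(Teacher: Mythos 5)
Your overall strategy is identical to the paper's: introduce the Dirichlet series $\sum_{(d,M)=1}\chi(d)\rho(d)d^{-s}$, factor out $L(s-(n-2),\chi)$, note that the remaining Euler product $G(s)$ is absolutely convergent and $O(M^{\ve})$ in a half-plane to the left of $\Re(s)=n-1$, apply Perron's formula (no residue since $\chi$ is non-principal), and finish by partial summation. The paper does all of this, but with a much cruder and more robust control of $G(s)$: it uses Hypothesis-$\rho$ directly to bound the tail $r\geq 2$ of each local factor by $O(p^{2n-4-2\sigma+\ve})$, and the Lang--Weil asymptotic $\rho(p)=p^{n-2}+O(p^{n-5/2})$ for $r=1$; these two inputs already give absolute convergence of $G(s)$ in $\sigma>n-\tfrac32$, and shifting the contour to $\sigma=n-\tfrac32+\ve$ yields $\sum_{d\leq z}\chi(d)\rho(d)\ll M^{\ve}z^{n-3/2+\ve}$, which is exactly what partial summation needs to produce $M^{\ve}/\sqrt{y}$.

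Your attempt to push the abscissa of convergence of $G(s)$ down to $(n+1)/2$ by invoking the recursion from the proof of Lemma~\ref{rho(d)} is both unnecessary and not correctly carried out. The claim that $F_p(s)/L_p(s-n+2,\chi)=1+O(p^{(n-1)/2-s})$ ``term by term'' fails already at the $p^{-2s}$ coefficient: that coefficient equals $\chi(p^2)\bigl(\rho(p^2)-p^{n-2}\rho(p)\bigr)$, and for a good prime the recursion gives $\rho(p^2)-p^{n-2}\rho(p)=p^{n}-p^{n-2}$, of size $\asymp p^{n}$, not $O(p^{(n-1)/2})$. It just so happens that $p^{n-2\sigma}$ is summable precisely for $\sigma>(n+1)/2$, so the claimed abscissa is numerically consistent with the $r=2$ term even though the stated local estimate is false. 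Worse, the $p^{-3s}$ coefficient equals $\chi(p^3)\bigl(\rho(p^3)-p^{n-2}\rho(p^2)\bigr)=\chi(p^3)\bigl((a-1)p^{2n-2}+p^{n}\bigr)$ where $a-1=p^{-n}(S(1)-1)\asymp p^{-1}$, giving a contribution of size $\asymp p^{2n-3-3\sigma}$ and hence requiring $\sigma>(2n-2)/3$, which exceeds $(n+1)/2$ as soon as $n\geq 8$. So your claimed region of absolute convergence is wrong for large $n$. None of this affects the lemma, because the paper's coarser bound $\sigma>n-\tfrac32$ already gives the required $\kappa=\tfrac12$ saving. I would simply drop the refined Euler product analysis, bound the $r\geq 2$ terms by Hypothesis-$\rho$ as the paper does, and take the contour at $\sigma=n-\tfrac32+\ve$.
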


\begin{proof}
Let $s=\sigma+it\in \CC$. 
In the usual way we consider the Dirichlet series
$$
\eta_M(s)=\sum_{(d,M)=1} 
\frac{\chi(d) \rho(d)}{d^{s}} 
=\prod_{p\nmid M} \left(1+\frac{\chi(p)\rho(p)}{p^s} +O(p^{2n-4-2\sigma+\ve})
\right),
$$
where the error term comes from Hypothesis-$\rho$.
Since 
$\rho(p)=p^{n-2}+O(p^{n-\frac{5}{2}})$, by the Lang--Weil estimate, we  conclude that 
$$
\eta_M(s)=L\left(s-(n-2),\chi\right) E_M(s),
$$
where $E_M(s)$ is absolutely convergent and bounded by $O(M^\ve)$ for $\sigma>n-\frac{3}{2}$.
The conclusion of the lemma is now available through a straightforward application 
of Perron's formula in the form \eqref{perron}.
\end{proof}

We deduce from  Lemma \ref{lem:d-m0} that
$$
\sum_{\substack{(d,q)=1}} \frac{\chi(d) \rho(d) V_T(d)}{d^{n-1}} 
\ll \frac{ q^\ve }{\sqrt{T}}
$$
and 
$$
\sum_{\substack{(d,q)=1}} \frac{\chi(d) \rho(d) V_T(B^2Q_1(\y)/d)}{d^{n-1}} 
\ll \frac{ q^\ve \sqrt{T}}{B\sqrt{Q_1(\y)}}
\ll \frac{ q^\ve \sqrt{T}}{B},
$$
for any $\y\in \supp(W)$. Here we recall that $Q_1(\y)$ is positive  and has order of magnitude $1$ on $\supp(W)$.

We now claim that 
$$
\sum_{T}L_T(B;W_d)=2C
+O(B^{-\frac{1}{2}+\ve}),
$$
with 
\begin{equation}\label{eq:CC}
C=\tau_\infty(Q_2,W)
\sum_{\substack{
\a\in (\mathbb Z/4\mathbb Z)^n\\
Q_1(\a)\equiv 1 \bmod{4}}}
\sum_{d=1}^\infty\frac{\chi(d)}{d^{n-1}} 
  \sum_{q=1}^\infty
  \frac{T_{d,q}(\ma{0})
}{q^n}.
\end{equation}
Now the weight function $W_d$ differs according to whether $T\leq B$ or $T>B$. 
It will be convenient to set 
$W^{(1)}(\y)=W(\y)V_T(d)$ and 
$W^{(2)}(\y)=W(\y)V_T(B^2Q_1(\y)/d)$.
In either case we wish to extend the sum over $T$ to the full range, since 
$\sum_T V_T(t)=1$ for $1\leq t\ll B^2$. 
We have 
$$
\sum_{T\leq B} L_T(B;W_d)
 = C -\sum_{T>B}L_T(B;W^{(1)}),
$$
and 
$$
\sum_{T> B} L_T(B;W_d)
 = C -\sum_{T\leq B}L_T(B;W^{(2)}).
$$
To estimate the tails we
employ the factorisation properties of 
$T_{d,q}(\ma{0})$, finding that 
$$
L_T(B;W^{(i)})=
\sum_{\substack{
\a\in (\mathbb Z/4\mathbb Z)^n\\
Q_1(\a)\equiv 1 \bmod{4}}}
  \sum_{q=1}^\infty
  \frac{1}{q^n} 
\sum_{\substack{\delta \mid q\\ \delta \ll B}} 
  \frac{T_{\delta,q}(\ma{0})}{\delta^{n-1}} 
\sum_{(d,q)=1}\frac{\chi(d)\rho(d)\tau_\infty(Q_2,W^{(i)})}{d^{n-1}},
$$
for $i=1,2$.
The claim is now an easy consequence of our hypothesised bound  
\eqref{eq:hyp-est} and Lemma \ref{lem:d-m0}.
 Bringing everything together, we have therefore shown that 
\begin{equation}\label{eq:t3}
M(B)
= \frac{2B^{n-2}}{4^{n-1}}
C +O(B^{n-\frac{5}{2}+\ve}+ 
B^{\frac{3n}{4}-1+\ve}),
\end{equation}
with $C$ given by 
\eqref{eq:CC}.

We wish to show that the leading 
constant  admits an interpretation in terms of local densities for the intersection of quadrics $X$ considered in Theorem \ref{th1}.
For a prime $p$ the relevant $p$-adic density is equal to
$$
\sigma_p=\lim_{k\rightarrow \infty} p^{-kn}
N(p^k),
$$
where 
$$
N(p^k)=
\#\left\{
(\x,u,v)\in (\ZZ/p^k\ZZ)^{n+2}:  
\begin{array}{l}
Q_1(\x)\equiv u^2+v^2 \bmod{p^k},\\
Q_2(\x)\equiv 0 \bmod{p^k}
\end{array}
\right\},
$$
if $p>2$, and 
$$
N(2^k)=
\#\left\{
(\x,u,v)\in (\ZZ/2^k\ZZ)^{n+2}:  
\begin{array}{l}
Q_1(\x)\equiv u^2+v^2 \bmod{2^k},\\
Q_2(\x)\equiv 0 \bmod{2^k}, ~2\nmid Q_1(\x)
\end{array}
\right\}.
$$
The restriction to odd values of $Q_1(\x)$ in  $N(2^k)$ comes from the definition of the counting function $S(B)$.
In order to relate these densities to the local factors that arise in our analysis,  we 
set
$$
  S(A;p^k)=\#\{(u, v) \in(\ZZ/p^k\ZZ)^2: u^2+v^2 \equiv A \bmod{p^k}\},
$$
for any  $A\in\ZZ$ and any  prime power $p^k$.
According to Heath-Brown \cite[\S 8]{h-b03} we have 
$$
S(A;p^k)=\begin{cases}
p^k+kp^k(1-1/p), &\mbox{if $v_p(A)\geq k$},\\
(1+v_p(A))p^{k}(1-1/p),
& \mbox{if $v_p(A)<k$},
\end{cases}
$$
when $p\equiv 1\bmod{4}$.  When $p\equiv 3 \bmod{4}$, we
have 
$$
S(A;p^k)=\begin{cases}
p^{2[\frac{k}{2}]}, &\mbox{if $v_p(A)\geq k$},\\
p^{k}(1+1/p), & \mbox{if $v_p(A)<k$ and $2\mid v_p(A)$},\\
0, & \mbox{if $v_p(A)<k$ and $2\nmid v_p(A)$}.
\end{cases}
$$
Finally, for odd $A$,  when $p=2$ and $k\geq 2$  we have
$$
S(A;2^k)=\begin{cases}
2^{k+1}, & \mbox{if $A\equiv 1\bmod{4}$,}\\
0, & \mbox{otherwise.}
\end{cases}$$

We now have everything in place to reinterpret the densities $\sigma_p$. 
We being by analysing the case $p=2$, obtaining 
\begin{equation}\label{eq:sig2}
\sigma_2=
\lim_{k\rightarrow \infty} 2^{1-k(n-1)}\#\left\{
\x\in (\ZZ/2^k\ZZ)^{n}:  
\begin{array}{l}
Q_1(\x)\equiv 1\bmod{4},\\
Q_2(\x)\equiv 0 \bmod{2^k}
\end{array}
\right\}.
\end{equation}
Alternatively, when $p>2$, it is straightforward to deduce that
\begin{equation}\label{eq:sigp}
\sigma_p=
\left(1-\frac{\chi(p)}{p}\right)
\lim_{k\rightarrow \infty} p^{-k(n-1)}
\sum_{0\leq e\leq k}
\chi(p^e)
\widetilde{N}_k(e),
\end{equation}
where 
$$
\widetilde{N}_k(e)=
\#\left\{
\x\in (\ZZ/p^k\ZZ)^{n}:  
\begin{array}{l}
Q_1(\x)\equiv 0\bmod{p^e},\\
Q_2(\x)\equiv 0 \bmod{p^k}
\end{array}
\right\}.
$$
Finally, for  the real density $\sigma_\infty$ of points, 
we claim that 
\begin{equation}\label{eq:siginf}
\sigma_\infty
=\pi  \tau_\infty(Q_2,W),
\end{equation}
in the notation of \eqref{eq:tau-inf}. 
Supposing that the equations
for $X$ are taken to be $Q_1(\x)=u^2+v^2$ and $Q_2(\x)=0$,
the real density is equal to 
$$
\sigma_\infty =\int_{-\infty}^\infty
\int_{-\infty}^\infty \int_{(\x,u,v)\in \RR^{n+2}} 
\hspace{-0.7cm}
W(\x) e\left( \alpha \{Q_1(\x)-u^2-v^2\} +\beta Q_2(\x)\right) \d\x \d u \d v\d \alpha \d \beta.
$$
We restrict $u,v$ to be non-negative and substitute 
$t=Q_1(\x)-u^2-v^2$ for $v$. Writing  
$$
F(t)=
\frac{1}{2}
\int_{-\infty}^\infty 
\int_{\x,u} \frac{W(\x) e\left(\beta Q_2(\x)\right) }{\sqrt{Q_1(\x)-u^2-t}} \d \x \d u \d \beta,
$$
where the integral is over $(\x,u)\in \RR^{n+1}$ such that $u\geq 0$ and 
$Q_1(\x)-u^2-t\geq 0$, we 
therefore obtain
$$
\sigma_\infty =4
\int_{-\infty}^\infty 
\int_t F(t)e(\alpha t) \d t\d \alpha.
$$
By the Fourier inversion theorem this reduces to $4F(0)$.
Noting that 
$$
\int_0^{\sqrt{A}} \frac{\d u}{\sqrt{A-u^2}}=\frac{\pi}{2},
$$
for any $A>0$, we arrive at the expression
$$
\sigma_\infty=4\times
\frac{1}{2}\times \frac{\pi}{2}
\int_{-\infty}^\infty 
\int_{\x\in \RR^n} W(\x) e\left(\beta Q_2(\x)\right)  \d \x  \d \beta.
$$
But the remaining integral is just the real density $\tau_\infty(Q_2,W)$, by \cite[Theorem 3]{H}.  This concludes the proof of
\eqref{eq:siginf}.

It is now time to interpret the constant 
$C$ in \eqref{eq:CC}
in terms of the local densities $\sigma_p$ and $\sigma_\infty$.
Invoking  Lemma \ref{lem:mult1} we may write
$$
C=
\tau_{\infty}(Q_2,W)
\sum_{\substack{
\a\in (\mathbb Z/4\mathbb Z)^n\\
Q_1(\a)\equiv 1 \bmod{4}}}
\sum_{d=1}^\infty\frac{\chi(d)}{d^{n-1}}
\sum_{\ell=0}^\infty
  \sum_{\substack{q'=1\\ 2\nmid q'}}^\infty
  \frac{1}{(2^\ell q')^n} 
S_{d,q'}(\ma{0})
S_{1,2^\ell}^{\chi(dq')}(\ma{0}).
$$
Recall \eqref{eq:Sell}.  We therefore see that for fixed $d$ and $q'$ the sum over $\a$ and $\ell$ is
\begin{align*}
\sum_{\substack{
\a\in (\mathbb Z/4\mathbb Z)^n\\
Q_1(\a)\equiv 1 \bmod{4}}}
\sum_{\ell=0}^\infty
  \frac{1}{2^{\ell n}} 
S_{1,2^\ell}^{\chi(dq')}(\ma{0})
&=
\sum_{\ell=0}^\infty
  \frac{1}{2^{\ell n}}
  \sideset{}{^{*}}\sum_{a\bmod{2^\ell}}
\sum_{
\substack{
\ma{k}\bmod{2^{2+\ell}}\\
Q_1(\ma{k})\equiv 1 \bmod{4}}}
 e_{2^{\ell}}
\left(a Q_2(\ma{k})\right)\\
&=
\lim_{\ell \rightarrow\infty} 2^{-\ell(n-1)} 
\#\left\{
\k\in (\ZZ/2^{2+\ell}\ZZ)^{n}:  
\begin{array}{l}
Q_1(\k)\equiv 1\bmod{4},\\
Q_2(\k)\equiv 0 \bmod{2^\ell}
\end{array}
\right\}\\
&=4^{n}\times  \frac{\sigma_2}{2},
\end{align*}
on carrying out the sum over $a$ and comparing with \eqref{eq:sig2}.
Hence it follows that 
$$
C=
4^{n}\times  \frac{\sigma_2}{2}\times
\tau_{\infty}(Q_2,W)
\sum_{d=1}^\infty\frac{\chi(d)}{d^{n-1}}
  \sum_{\substack{q'=1\\ 2\nmid q'}}^\infty
  \frac{1}{q'^n} 
S_{d,q'}(\ma{0}).
$$
Expressing the sum over $d$ and $q'$ as an Euler product one finds that 
\begin{align*}
\sum_{d=1}^\infty\frac{\chi(d)}{d^{n-1}}
  \sum_{\substack{q'=1\\ 2\nmid q'}}^\infty
  \frac{1}{q'^n} 
S_{d,q'}(\ma{0})
&=
\prod_{p>2}
\sum_{r,\ell\geq 0}
\frac{p^r\chi(p^r)}{p^{(r+\ell)n}}
 S_{p^r,p^\ell}(\ma{0}).
\end{align*}
Here $S_{p^r,1}(\ma{0})=\widetilde{N}_r(r)$ and 
$
S_{p^r,p^\ell}(\ma{0})=p^\ell \widetilde{N}_{r+\ell}(r) - 
p^{\ell-1+n} \widetilde{N}_{r+\ell-1}(r),
$
when $\ell\geq 1$, in the notation of \eqref{eq:sigp}.  It easily follows that
$$
\sum_{d=1}^\infty\frac{\chi(d)}{d^{n-1}}
  \sum_{\substack{q'=1\\ 2\nmid q'}}^\infty
  \frac{1}{q'^n} 
S_{d,q'}(\ma{0})
=\prod_{p>2} 
\tau_p,
$$
with
\begin{align*}
\tau_p&=
\lim_{k\rightarrow \infty} p^{-k(n-1)}\sum_{0\leq r\leq k} \chi(p^r)\widetilde{N}_k(r) 
=\left(1-\frac{\chi(p)}{p}\right)^{-1} \sigma_p.
\end{align*}
Finally, on appealing to the identity \eqref{eq:siginf} and noting that $L(1,\chi)=\pi/4$, we deduce that
\begin{align*}
C
&=
4^{n}\times  \frac{\sigma_2}{2} \times
\tau_{\infty}(Q_2,W)
L(1,\chi)\prod_{p>2} 
\sigma_p\\
&=
\frac{4^{n-1}}{2}\times 
\sigma_{\infty}
\prod_{p} 
\sigma_p.
\end{align*}
Once inserted into \eqref{eq:t3} we 
therefore arrive at the statement of Lemma \ref{lem:main}.

\end{document}